\let\@wraptoccontribs\wraptoccontribs
\definecolor{deepjunglegreen}{rgb}{0.0, 0.29, 0.29}
\definecolor{darkspringgreen}{rgb}{0.09, 0.45, 0.27}
\pretocmd\section{\Needspace*{4\baselineskip}}{}{}
\DeclareMathOperator{\Gra}{{Gr}}
\newcommand{\rar}[1]{\stackrel{#1}{\longrightarrow}}
\newcommand{\bA}{{\mathbb A}}
\newcommand{\bG}{{\mathbb G}}
\newcommand{\bP}{{\mathbb P}}
\newcommand{\bT}{{\mathbb T}}
\newcommand{\bW}{{\mathbb W}}
\newcommand{\bZ}{{\mathbb Z}}
\newcommand{\cD}{{\mathcal D}}
\newcommand{\cH}{{\mathcal H}}
\newcommand{\cL}{{\mathcal L}}
\newcommand{\cM}{{\mathcal M}}
\newcommand{\cO}{{\mathcal O}}
\newcommand{\cS}{{\mathcal S}}
\newcommand{\cZ}{{\mathcal Z}}
\newcommand{\fg}{{\mathfrak g}}
\newcommand{\nG}{\widetilde G^e}
\newcommand{\nfG}{\widetilde G^{\flat , e}}
\newcommand{\nc}{\newcommand}
\nc\wh{\widehat}
\nc\on{\operatorname}
\nc\Gr{\on{Gr}}
\nc\Fl{\on{Fl}}
\DeclareMathOperator{\Lie}{{Lie}}
\DeclareMathOperator{\Mat}{{Mat}}
\DeclareMathOperator{\Mod}{{Mod}}
\DeclareMathOperator{\coker}{{coker}}
\DeclareMathOperator{\Sp}{{Sp}}
\DeclareMathOperator{\lsp}{\mathfrak{sp}}
\DeclareMathOperator{\Class}{{Cl}}
\DeclareMathOperator{\conn}{{Conn}}
\newcommand{\limto}{{\displaystyle\lim_{\longrightarrow}}}
\newcommand{\rightlim}{\mathop{\limto}}
\newcommand{\leftlim}{\mathop{\displaystyle\lim_{\longleftarrow}}}
\newcommand{\limfromn}{\leftlim\limits_{\raise3pt\hbox{$n$}}}
\newcommand{\limton}{\rightlim\limits_{\raise3pt\hbox{$n$}}}
\newcommand{\rightlimit}[1]{\mathop{\lim\limits_{\longrightarrow}}\limits%
                    _{\raise3pt\hbox{$\scriptstyle #1$}}}
\newcommand{\leftlimit}[1]{\mathop{\lim\limits_{\longleftarrow}}\limits%
                    _{\raise3pt\hbox{$\scriptstyle #1$}}}
\newcommand{\epi}{\twoheadrightarrow}
\newcommand{\iso}{\buildrel{\sim}\over{\longrightarrow}}
\newcommand{\mono}{\hookrightarrow}
\DeclareMathOperator{\Id}{{Id}}
\DeclareMathOperator{\Aut}{{Aut}}
\DeclareMathOperator{\End}{{End}} 
\DeclareMathOperator{\Hom}{{Hom}}
\DeclareMathOperator{\im}{{Im}} 
\DeclareMathOperator{\Mor}{{Mor}}
\DeclareMathOperator{\Spec}{{Spec}}
\DeclareMathOperator{\PGL}{{PGL}}
\DeclareMathOperator{\GL}{{GL}}
\DeclareMathOperator{\Pic}{{Pic}}
\DeclareMathOperator{\QCoh}{{QCoh}}
\DeclareMathOperator{\Ad}{{Ad}}
\newcommand{\Rmnum}[1]{\expandafter\@slowromancap\romannumeral #1@}
\newtheorem{Th}{Theorem}
\newtheorem{pr}{Proposition}[section]
\newtheorem{lm}[pr]{Lemma}
\newtheorem{cor}[pr]{Corollary}
\theoremstyle{definition}
\newtheorem{rem}[pr]{Remark}
\numberwithin{equation}{section}
\newcommand{\Br}{\operatorname{Br}}
\newcommand{\modulo}{\operatorname{mod}}
\DeclareMathOperator{\ad}{{ad}}
\DeclareMathOperator{\proj}{{pr}}
\begin{document}

\title[Quantization of symplectic varieties in characteristic $p$]
{On the Bezrukavnikov-Kaledin quantization of symplectic varieties in characteristic $p$}

%\author{Alexander Petrov \quad Dmitry Vaintrob \quad Vadim Vologodsky}

%\address{Department of Mathematics, University of Oregon, Eugene, OR, 97403, USA}
%\email{allens@uoregon.edu, vvologod@uoregon.edu}

%and
%\affilnum{2}Complete Second Author Address}

% Address / e-mail address of corresponding author
%\correspdetails{corr.email@math.edu}
\author[E.~Bogdanova]{Ekaterina Bogdanova}
\address{National Research University ``Higher School of Economics'',  Russia}
\email{katbogd11@gmail.com}
\author[V. ~Vologodsky]{Vadim Vologodsky}
\address{National Research University ``Higher School of Economics'',  Russia}
\email{vologod@gmail.com}

\subjclass[2010]{Primary 14G17, 53D55.}
\keywords{Deformation quantization, Positive characteristic.}

%\contrib[With an appendix by]{Ekaterina Bogdanova, Dmitry Kubrak,  Roman Travkin, and Vadim Vologodsky}
\begin{abstract} 
We prove that after  inverting the Planck constant $h$ the Bezrukavnikov-Kaledin quantization $(X, \cO_h)$ of symplectic variety $X$  in characteristic $p$ with $H^2(X, \cO_X) =0$ is Morita equivalent
to a certain central reduction of the algebra of differential operators on $X$. 
\end{abstract}

\maketitle

\tableofcontents

\section{Introduction}
\subsection{Frobenius-constant quantizations.}
For the duration of this paper, let $k$ be a perfect field of characteristic $p>2$.  Given a scheme $X$ over $k$ we denote by $X'$ the Frobenius twist of $X$ and by $F: X\to X'$ the $k$-linear Frobenius morphism.
Since $F$ is a homeomorphism on the underlying topological spaces we shall identify the categories of sheaves on $X$ and $X'$.

Let $X$ be a smooth variety over $k$ equipped with a symplectic $2$-form $\omega$. 
Recall, that a quantization  $(X, \cO_h)$ of $(X, \omega)$ is a sheaf $\cO_h$ on the  Zariski site of $X$  of flat $k[[h]]$-algebras complete with respect to the $h$-adic topology together with an isomorphism of $k$-algebras
$$ \cO_h/h \iso  \cO_X$$
such that, for any two local sections $\tilde f$, $\tilde g$ of    $\cO_h$, one has
$$\{f,g\}\equiv  \frac{\tilde f  \tilde g - \tilde g  \tilde f}{h} \modulo h.$$ 
Here $f$ and $g$ stand  for the images in $\cO_X$ of  $\tilde f$ and  $\tilde g$ respectively and $\{, \}$ for the Poisson bracket $\cO_X$ induced by the symplectic structure.  
Note that if $X$ is affine then giving a quantization  $(X, \cO_h)$ of $(X, \omega)$ is equivalent to giving a quantization $\cO_h(X)$ of the Poisson algebra $\cO_X(X)$ (see {\it e.g.} \cite[Remark 1.6]{bk2}).

A feature special to characteristic $p$ is that the Poisson algebra $\cO_X$ of a symplectic variety has a large center consisting of $p$-th powers of functions. We are going to identify it with the sheaf $\cO_{X'}$ using 
the Frobenius morphism
$$F^*: \cO_{X'} \iso   \cO_{X}^p \subset \cO_{X}.$$
Given a quantization $(X, \cO_h)$ of $(X, \omega)$  we have $k$-linear homomorphisms 
 \begin{equation}\label{centralquantization}
 \cZ_h \epi \cZ_h/h \mono \cO_{X'}.
\end{equation}
from the center  $\cZ_h $ of the quantization $\cO_h$ to the Poisson center. Following \cite{bk}, a quantization is called central if the composition (\ref{centralquantization}) is surjective. 
%A more structured concept, also introduced in \cite{bk}, is that of Frobenius-constant quantization. 
A Frobenius-constant quantization 
of $(X, \omega)$ is a pair consisting of a quantization $(X, \cO_h)$ of the symplectic variety $X$ together with a $k[[h]]$-algebra isomorphism
\begin{equation}\label{center}
s:  \cO_{X'}[[h]] \iso \cZ_h
\end{equation}
such that, for any local section $f^p \in \cO_{X}^p = \cO_{X'} \subset \cO_{X'}[[h]]$ and a lift $\tilde f\in \cO_h$ of $f\in \cO_X$, one has  that\footnote{Recall, that for any associative algebra $A$ over a field of characteristic $p$ and every  elements $x, y \in A$, 
the element $(x+y)^p -x^p -y^p$ can be written as a homogeneous Lie polynomial in $x$ and $y$ of degree $p$. Applying this to $A=\cO_h$ we infer that $\tilde f ^p \mod h^p$ depends only on $f\in \cO_h/h$ and not on the choice  of
a lifting $\tilde f \in \cO_h$  of $f$. Also, using  that, for any $x\in A$,  
one has that $\ad _{x^p}= (\ad_x)^p: A\to A$, it follows that $\tilde f ^p $ is a central element of  $\cO_h/h^p$.}
$$s(f^p) = \tilde f^p \mod h^{p-1}.$$ 
It is clear that a quantization that admits a Frobenius-constant structure is central. 

A Frobenius-constant structure on  $(X, \cO_h)$ makes  $\cO_h$ into a sheaf of algebras over $ \cO_{X'}[[h]]$.
It is shown in \cite{bk} that $\cO_h$ is locally free of rank $p^{\dim X}$ as an $\cO_{X'}[[h]]$-module for the Zariski topology on $X'$. 

Frobenius-constant quantizations of symplectic varieties have been first introduced by Bezrukavnikov and Kaledin as a tool for proving  the categorical McKay correspondence for symplectic resolutions of singularities 
(see \cite{bk3}). Most of the foundational results have been obtained in \cite{bk}. The technique introduced in  \cite{bk3} has found some other applications in geometric representations theory (see {\it e.g.}, \cite{bf}, \cite{bl}).
A key to all these applications is {\it the Azumaya property} of  the algebra  $\cO_h(h^{-1})$ obtained from $\cO_h$ by inverting $h$: it is shown in \cite{bk} that for any Frobenius-constant 
quantization 
on  $(X, \cO_h)$,  the algebra $\cO_h(h^{-1})$  is isomorphic, that is locally, for the {\it fppf}  topology  on $X'$,   to  a  matrix algebra over $\cO_{X'}((h))$. Since the algebra $\cO_h$ has no zero divisors  $\cO_h(h^{-1})$
does not split even locally for the \'etale topology on $X'$ (unless $\dim X =0$). In \cite[Proposition 1.24]{bk} a formula for the class of this Azumaya algebra in an appropriate Brauer group was proposed. However, it has been observed in \cite{m}
that the formula in \cite{bk} is not correct as stated. 
The immediate goal of this paper is to correct it.  The technique introduced along its proof (in particular, the Basic Lemma from \S \ref{planofproof.intro}) plays an essential role in a sequel paper joint with Kubrak and Travkin
\cite{bktv}, where we prove that the category of quasi-coherent sheaves on any restricted  symplectic variety admits a canonical Frobenius-constant quantization.     
 
 \subsection{Differential operators as  a Frobenius-constant quantization.}\label{intro.dif}
   A basic example of a Frobenius-constant quantization is the following. Let $Y$ be a smooth variety over $k$, $X:= \bT^*_Y$ the cotangent bundle to $Y$ equipped with the canonical symplectic 
structure $\omega$. Denote by $D_{Y}$ the sheaf of differential operators on $Y$.  This comes with a filtration given by the order of a differential  operator. 
Applying the Rees construction to the filtered algebra $D_{Y}$ we obtain a sheaf of algebras $D_{Y, h}$ flat over $k[h]$ whose fiber over $h=1$ is $D_Y$ and whose fiber over $h=0$ 
is the symmetric algebra $S^\cdot T_Y$. 
Explicitly,  $D_{Y, h}$ is the subalgebra of $D_{Y}[h]$ generated by $h$, $\cO_Y$, and $hT_Y$.
The $p$-curvature homomorphism  
$$S^{\cdot} T_{Y'} \to  D_{Y, h}$$ 
sending a function $f\in \cO_Y$ to $f^p$ and a vector  field $\theta \in T_Y$ to $(h \theta)^p - h^{p-1}(h\theta^{[p]})$ induces an isomorphism between the algebra $S^{\cdot} T_{Y'}[h]$ and the center 
of   $D_{Y, h}$. In particular, $D_{Y, h}$ can be viewed as a quasi-coherent sheaf on $\bT_{Y'}^*$. The canonical  Frobenius-constant quantization of $(\bT^*_Y, \omega)$ is obtained from 
$D_{Y, h}$ by $h$-completion. We shall denote this canonical Frobenius-constant quantization of  $(\bT^*_Y, \omega)$ by  $(\bT^*_Y,  \cD_{Y, h})$.
\subsection{Restricted Poisson structures.}
There is a local obstruction to the existence of a central quantization of a symplectic variety $(X, \omega)$. It is observed in  \cite{bk} that if $f^p \in  \cO_{X'} \iso   \cO_{X}^p$ is in the image (\ref{centralquantization}) then
the restricted power $H_f^{[p]}$ of the Hamiltonian vector field $H_f$ is again Hamiltonian: $H_f^{[p]}= H_{f^{[p]}}$ for some $f^{[p]}\in \cO_{X}$. For example, it follows that the torus $(\bG_m \times \bG_m, \omega= 
 \frac{dx}{x}\wedge  \frac{dy}{y})$ does not admit central quantizations.
 
A Frobenius-constant structure on $(X, \cO_h)$ provides a canonical Hamiltonian for $H_f^{[p]}$.
In fact, given a Frobenius-constant quantization 
$(X, \cO_h, s)$ the formula 
\begin{equation}\label{restricted}
f^{[p]}= \frac{1}{h^{p-1}}   (\tilde f^p  -   s(f))   \mod h
\end{equation}
defines a restricted structure on the Poisson algebra $\cO_X$, that is the structure of a restricted Lie algebra on $\cO_X$ such that $(f^2)^{[p]}= 2 f^{[p]} f^p$ and  $H_f^{[p]}= H_{f^{[p]}}\footnote{Whereas the notion of restricted  Lie algebra goes back to Jacobson (1937),  the concept of restricted Poisson algebra is an invention of Bezrukavnikov and Kaledin  (\cite[Def. 1.9]{bk}). Note that using the identity $fg = \frac{1}{4}((f+g)^2 -(f-g)^2)$, one
has that $(fg)^{[p]} =  f^p g^{[p]} + f^{[p]} g^p + P(f, g)$, where $P(f, g)$ is an element of a free Poisson algebra on $f$ and $g$. In \cite{bk}, the authors construct $P(f, g)$ explicitly in any characteristic  which makes it possible to define the notion of restricted Poisson algebra  even in characteristic $2$.}$.

It is shown in \cite{bk}  that, for every symplectic variety  $(X, \omega)$,  giving  a restricted  Poisson structure on $\cO_X$ is equivalent to giving a class
$$[\eta] \in H^0_{Zar}(X, \coker(\cO_X \rar{d} \Omega^1_X))$$
 such that 
 $$d([\eta])=\omega.$$
 In one direction, if $\eta\in \Omega^1_X$, $d\eta=\omega$, then the formula 
 $$f^{[p]}= L^{p-1}_{H_f} \iota_{H_f} \eta - \iota_{H_f^{[p]}} \eta$$
 defines a restricted structure on $\cO_X$.
 In particular, if $(X, \omega)$ admits a restricted structure then $\omega$ is exact locally for Zariski topology on $X$. 
  \subsection{Classification of Frobenius-constant quantizations.}\label{classification}
 Fix a symplectic variety $X$ with a restricted Poisson structure $[\eta]$. Denote by $Q(X, [\eta])$ the set of isomorphism classes of Frobenius-constant quantizations 
$(X, \cO_h, s)$ compatible with $[\eta]$. In  \cite{bk}, Bezrukavnikov and Kaledin constructed a map of sets
\begin{equation}\label{bktheorem}
\rho: Q(X, [\eta]) \to  H^1_{et}(X',  \cO_{X'}^*/\cO_{X'}^{* p} )
 \end{equation}  
and showed that if  $H^1_{Zar}(X',  \cO_{X'}/\cO_{X'}^p )=0$ then  $\rho $ is injective and if  $H^2_{Zar}(X',  \cO_{X'}/\cO_{X'}^p )=0$ then  $\rho $ is surjective. Consequently, if both cohomology groups vanish the map $\rho$ is a bijection and there is a canonical Frobenius-constant quantization of  $(X, [\eta])$ corresponding to $0\in H^1_{et}(X',  \cO_{X'}^*/\cO_{X'}^{* p} )$.
  This quantization $(X, \cO_h, s)$ is uniquely characterized as the one that admits 
 a $\bZ/2$-equivariant structure:   an isomorphism 
$\cO_{X'}[[h]]$-algebras 
 \begin{equation}\label{Z/2Z-equivariant}
 \alpha:  \cO_{-h}^{op}\iso \cO_h
 \end{equation}
identical modulo $h$ and such that $\alpha \circ \alpha = \Id$.  
We review the construction of $\rho$ in \S \ref{SBKconstruction}.
\subsection{A central reduction of the algebra of differential operators.}\label{intro.centralred}  A class 
$$[\eta] \in H_{Zar}^{0}\left(X, \operatorname{coker}\left(\mathcal{O}_{X} \stackrel{d}{\longrightarrow} \Omega_{X}^{1}\right)\right)$$
gives rise to a certain central reduction $\mathcal{D}_{X, [\eta], h}$ of the algebra $\mathcal{D}_{X,  h}$. 
We first construct this reduction locally and then glue.
For any open subset $U$ together  with a $1$-form $\eta \in \Omega^1(U)$ representing  $[\eta]$,  consider the graph $\Gamma_{\eta}: U' \rightarrow \mathbb{T}_{U'}^*$
of $\eta \otimes 1 \in \Omega^1 (U')$.  Let $\mathcal{D}_{U, h}$ be the quantization of $\mathbb{T}^*_{U}$ defined above regarded as a locally free sheaf of modules over 
$S^{\cdot}T_{U'}[[h]]$ on $U$. Set
 \begin{equation}\label{graphrestriction}
\Gamma_{\eta_1}^{*} \mathcal{D}_{U, h} = \mathcal{D}_{U, h}/ I_{\Gamma_{\eta}} \mathcal{D}_{U, h}.
 \end{equation}
 Here $I_{\Gamma_{\eta}} \subset S^{\cdot}T_{U'}$ is sheaf of ideals defined by the closed embedding $\Gamma_{\eta}$. Note that $\Gamma_{\eta_1}^{*} \mathcal{D}_{U, h}$ is a sheaf of algebras
 over $S^{\cdot}T_{U'}/I_{\Gamma_{\eta}}  [[h]]\iso \cO_{U'}[[h]]$.

Suppose we are given two forms $\eta_1$, $\eta_2$ on $U$ representing the class $[\eta]$. Let us construct a canonical isomorphism between the algebras $\Gamma_{\eta_1}^{*} \mathcal{D}_{X, h}$ and $\Gamma_{\eta_2}^{*} \mathcal{D}_{X, h}$. Set    $\mu = \eta_1 - \eta_2$.  Define the automorphism $\phi_{\mu}$ of $\mathcal{D}_{X, h}$ by setting $\phi_{\mu}(f)=f$ and $\phi_{\mu}(h\theta)=h\theta + \iota_{\theta}\mu$, for any function $f$ and vector field $\theta$\footnote{Informally, this isomorphism is the conjugation by $e^{\frac{1}{h} \int \mu }$.}. Let $t_{\mu}$ be the translation by $\mu$ on $S^{\cdot}T_{U'}$ {\it i.e.},  an automorphism sending a vector field $\theta$ to $\theta  + \iota_{\theta}\mu$. Then using the Katz formula  \cite[\S 7.22]{katz}  (and the exactness of $\mu$)  the following diagram is commutative
\[
\begin{tikzcd}
      \mathcal{O}_{U'} & S^{\cdot}T_{U'} \arrow[l, "\Gamma_{\eta_1}^*"]{} \arrow[r, "s"]\arrow[d, "t_{\mu}"]&  \mathcal{D}_{U, h} \arrow[d, "\phi_{\mu}"]&   \\
      & S^{\cdot}T_{U'} \arrow[lu, "\Gamma_{\eta_2}^*"] \arrow[r, "s"]  & \mathcal{D}_{U, h}.
\end{tikzcd}
\]
The desired isomorphism is given by the formula
$$\Gamma_{\eta_1}^{*} \mathcal{D}_{U, h}  =\mathcal{D}_{U, h} \otimes_{S^{\cdot}T_{U'}} S^{\cdot}T_{U'} \slash \mathcal{I}_{\Gamma_{\eta_1}}\rar{\phi_{\mu}\otimes t_\mu}
\mathcal{D}_{U, h} \otimes_{S^{\cdot}T_{U'}} S^{\cdot}T_{U'} \slash \mathcal{I}_{\Gamma_{\eta_2}}=\Gamma_{\eta_2}^{*} \mathcal{D}_{U, h}. $$

Given three  $1$-forms $\eta_1$, $\eta_2$, and $\eta_3$ representing the class $[\eta]$ one has 
$$(\phi_{\eta_1 - \eta_2} \otimes t_{\eta_1 - \eta_2})    \circ (\phi_{\eta_2 - \eta_3} \otimes t_{\eta_2 - \eta_3}) = \phi_{\eta_1 - \eta_3}\otimes t_{\eta_1 - \eta_3} .$$
The sheaf of algebras $\mathcal{D}_{X, [\eta], h}$ is obtained by  gluing $ \Gamma_{\eta}^{*} \mathcal{D}_{U, h}$ along the above isomorphisms. 

The sheaf $\cD_{X, [\eta], h}$ of $\cO_{X'}[[h]]$-algebras  is locally free as a $\cO_{X'}[[h]]$-module of rank $p^{2\dim X}$. The commutative algebra $\cD_{X, [\eta], h}/h$ is isomorphic to 
the algebra of functions on the Frobenius neighborhood of the zero section $X\mono \bT^*_X$ with the Poisson structure given by the symplectic form $\omega_{can} + \proj ^* \omega$
on $\bT^*_X$. Here $\omega_{can}$ is the canonical symplectic form on the cotangent bundle, and $\proj: \bT^*_X \to X$ is the projection.
\begin{rem}\label{extension}
	The sheaf $\mathcal{D}_{X, [\eta], h}$ is the restriction 
	 of a certain canonical locally free $\cO_{X^{\prime} \times \bP^1}$-algebra over $X'\times \bP^1$ to the formal completion of $X' \times \{0\} \mono X'\times \bP^1$ (\cite{bktv} \S3.3). 
\end{rem}
 \subsection{Main result.} 
 Denote by $\Br(X'[[h]])$  the Brauer group of the formal scheme  $(X',  \cO_{X'}[[h]])$ obtained from $X' \times \Spec k[h]$ by completion along the closed subscheme cut by  the equation $h=0$.
 We have homomorphisms:
  \begin{equation}\label{pullbackbr}  
\delta \colon H^1_{et}(X',  \cO_{X'}^*/\cO_{X'}^{* p} )  \to H^2_{et}(X',  \cO_{X'}^* ) \cong \Br(X') \mono \Br(X'[[h]]).
 \end{equation}
 The first map in (\ref{pullbackbr}) is the boundary morphism associated to the short exact sequence of sheaves for the \'etale topology 
$$0 \to  \cO_{X'}^* \rar{p}  \cO_{X'}^* \rar{} \cO_{X'}^*/\cO_{X'}^{* p} \to 0.$$
The right arrow   in (\ref{pullbackbr}) is the pullback homomorphism which is a split injection because its composition with the restriction homomorphism 
$$i^*\colon  \Br(X'[[h]]) \to \Br(X')$$
 is the identity. 
Given a class $\gamma \in  H^1_{et}(X',  \cO_{X'}^*/\cO_{X'}^{* p} )$ we denote by $\delta(\gamma) \in  \Br(X'[[h]])  $  the image of $\gamma$ under
the composition (\ref{pullbackbr}).
Finally, we can state the main result of this paper.
\begin{Th}\label{main} Let $(X, \omega)$ be a smooth symplectic variety of dimension $2n$ over an algebraically closed field $k$ of characteristic $p>2$, and let 
 $(X, \cO_h, s)$ be a  Frobenius-constant  quantization 
of $(X, \omega)$.  Denote by  $[\eta] \in H^0_{Zar}(X, \coker(\cO_X \rar{d} \Omega^1_X))$ the  restricted Poisson structure corresponding to $(X, \cO_h, s)$ and by 
$\gamma= \rho(X, \cO_h, s) \in  H^1_{et}(X',  \cO_{X'}^*/\cO_{X'}^{* p} )$ the image of $(X, \cO_h, s)$ under   (\ref{bktheorem}).
Then there exists an Azumaya algebra $\cO^\sharp_h$ over the formal scheme  $(X',  \cO_{X'}[[h]])$  with the following properties:
\begin{itemize}
\item[(i)]
There exists an isomorphism of $\cO_{X'}((h))$-algebras
 \begin{equation}\label{maintheq1}  
(\cO_h \otimes_{\cO_{X'}[[h]]} \cD_{X, [\eta], h}^{op})(h^{-1})\iso \cO^\sharp_h(h^{-1})
\end{equation}
\item[(ii)] We have that
 \begin{equation}\label{formularestrintro}
 i^*[\cO^\sharp_h]= i^*(\delta(\gamma)).
  \end{equation}
In partucular, if $H^2(X, \cO_X) =0$ then $[\cO^\sharp_h] =  \delta(\gamma)$.
\end{itemize}
\end{Th}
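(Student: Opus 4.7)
The plan is to construct the Azumaya algebra $\cO^\sharp_h$ as a twisted endomorphism algebra on an \'etale cover and to compute its Brauer class by reducing modulo $h$ and invoking the standard realization of the boundary map $\delta$.

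First, I would observe that $\cD_{X,[\eta],h}$ is locally free of rank $p^{2\dim X}$ over $\cO_{X'}[[h]]$ and becomes Azumaya after inverting $h$; this follows from the Azumaya property of the Rees algebra $\cD_{X,h}(h^{-1})$ together with a direct analysis of the graph restriction used to define $\cD_{X,[\eta],h}$. Consequently, $(\cO_h \otimes_{\cO_{X'}[[h]]} \cD_{X,[\eta],h}^{op})(h^{-1})$ is an Azumaya algebra over $\cO_{X'}((h))$. The substance of the theorem is to realize this as $\cO^\sharp_h(h^{-1})$ for an Azumaya algebra $\cO^\sharp_h$ defined on the formal scheme $(X', \cO_{X'}[[h]])$ and to compute its Brauer class.

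Second, I would choose an \'etale cover $\{U'_i \to X'\}$ on which $\gamma|_{U'_i}$ vanishes and on which both Azumaya algebras $\cO_h|_{U'_i}(h^{-1})$ and $\cD_{X,[\eta],h}|_{U'_i}(h^{-1})$ admit splitting modules $\cM_i$ and $\cN_i$ extending to locally free $\cO_{U'_i}[[h]]$-modules $\widetilde{\cM}_i$, $\widetilde{\cN}_i$. For $\cD_{X,[\eta],h}$ the extension is supplied by the pullback of $F_*\cO_X$ along the graph of a local representative $\eta_i$ of $[\eta]$; for $\cO_h$ it is supplied by a local Darboux splitting, available because $\gamma|_{U'_i}=0$. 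Define $\cO^\sharp_h|_{U'_i} := \End_{\cO_{U'_i}[[h]]}(\widetilde{\cM}_i \otimes \widetilde{\cN}_i^\vee)$. The transition between splittings on overlaps $U'_{ij}$ yields a \v{C}ech $1$-cocycle in $\cO_{X'}^*/\cO_{X'}^{*p}$ representing $\gamma$, by the construction of $\rho$ recalled in \S\ref{SBKconstruction}. Glue the local pieces along this cocycle, lifted arbitrarily to $\cO_{X'}^*$; the lifting ambiguity is absorbed because $p$-th powers are central and act trivially on the endomorphism algebra by conjugation. This yields a well-defined Azumaya algebra $\cO^\sharp_h$ over the formal scheme, and the isomorphism~(\ref{maintheq1}) follows from the local identifications $(\cO_h \otimes \cD^{op})|_{U'_i}(h^{-1}) \cong \End(\cM_i \otimes \cN_i^\vee)$, which are compatible with the gluing cocycle on both sides.

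Third, to verify~(ii), I would restrict modulo $h$: the local reduction of $\cO^\sharp_h$ is a matrix algebra $\End_{\cO_{U'_i}}\!\bigl((\widetilde{\cM}_i \otimes \widetilde{\cN}_i^\vee)/h\bigr)$ twisted on overlaps by the \v{C}ech cocycle for $\gamma$. By the standard realization of the boundary map $\delta\colon H^1_{et}(X', \cO_{X'}^*/\cO_{X'}^{*p}) \to H^2_{et}(X', \cO_{X'}^*) = \Br(X')$ --- which sends a cocycle $c_{ij}$ to the Brauer class of the matrix algebra twisted by a Skolem-Noether lift of $c_{ij}$ --- this class is exactly $\delta(\gamma)$. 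Under the hypothesis $H^2(X, \cO_X) = 0$, the kernel of $i^*\colon \Br(X'[[h]]) \to \Br(X')$ vanishes by a standard deformation-theoretic argument, yielding the outright equality $[\cO^\sharp_h] = \delta(\gamma)$.

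The principal obstacle is in the second step: one must choose the local splittings so that the induced transition cocycle coincides, on the nose, with the \v{C}ech cocycle defining $\gamma = \rho(X, \cO_h, s)$, and not merely one differing from it by a $p$-th power or a sign. This requires careful tracing through the construction of $\rho$ in \S\ref{SBKconstruction}, which involves the $p$-th power map on local liftings modulo $h^{p-1}$, and it is precisely at this step that the original formula of Bezrukavnikov-Kaledin required the correction noted in~\cite{m}.
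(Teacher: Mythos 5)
Your approach has a genuine gap at the crucial step, and the gap is closely related to the very error in \cite[Proposition 1.24]{bk} that this paper corrects.

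In your second step you want, on each $U'_i$, a splitting module $\cM_i$ of $\cO_h|_{U'_i}(h^{-1})$ that extends to a locally free $\cO_{U'_i}[[h]]$-module $\widetilde{\cM}_i$, and likewise $\widetilde{\cN}_i$ for $\cD$, and then you define $\cO^\sharp_h|_{U'_i} := \End_{\cO_{U'_i}[[h]]}(\widetilde{\cM}_i \otimes \widetilde{\cN}_i^\vee)$. Any lattice in $\cM_i$ gives \emph{some} $\widetilde{\cM}_i$, so the issue is not local existence but whether the lattices $\widetilde{\cM}_i \otimes \widetilde{\cN}_i^\vee$ can be chosen so that the full gluing data on overlaps — which consists not only of the class $\gamma\in\cO^*/\cO^{*p}$ but of the complete formal coordinate change, i.e.\ an element of the automorphism group $G=\underline{\Aut}(A_h)$ acting via $G\mono L\PGL$ — preserves the lattice up to rescaling. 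Your proposal treats the transition cocycle as if it were simply $\gamma$ lifted to $\cO^*$, but the actual cocycle involves the entire torsor structure, and the question of whether the lattice survives is precisely the question of whether the composite $G\to L\PGL(p^{3n})$ factors through $L^+\PGL(p^{3n})$ after conjugation. This is not automatic: as Remark~\ref{errorinbk} emphasizes, the analogous statement for $\cO_h(h^{-1})$ alone (i.e.\ $G\mono L\PGL(p^{n})$) is \emph{false} — even the translation subgroup $\alpha_p^{2n}\subset G$ preserves no lattice in $A_h(h^{-1})$, because its commutator pairing on $\Lie\alpha_p^{2n}$ lands in $h^{-1}k\subset k((h))$ and not in $k[[h]]$. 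So "a local Darboux splitting, available because $\gamma|_{U'_i}=0$" does not produce a lattice compatible with the transition data, and the single-factor version of your construction already breaks down.

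What makes the theorem true is that the \emph{tensor product} $A_h\otimes A_h^{\flat,op}$ does carry a $G$-invariant lattice, while neither factor does. Establishing this is the entire content of Proposition~\ref{keyprop} and the Basic Lemma (Theorem~\ref{lifting}): one must show that the central extension of $G$ by $L\bG_m$ arising from $G\to L\PGL(p^{3n})$ reduces to $L^+\bG_m$, and this is proved by a delicate analysis of central extensions of $G_0$ by the affine grassmannian, the Heisenberg group $\widetilde\alpha$, and its invariant connection. Your proposal has no surrogate for this, so the gluing in step two is unjustified. Your step three is sound in outline granted step two, and your final paragraph correctly senses that there is a subtlety in matching cocycles, but locates it in the wrong place: the problem is not a sign or a $p$-th power ambiguity in the cocycle, but whether any compatible family of lattices exists at all.
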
 
In particular, let  $(X, \cO_h, s)$ be a  Frobenius-constant  quantization that admits $\bZ/2\bZ$-equivariant structure  (\ref{Z/2Z-equivariant}). Assume that $H^2(X, \cO_X) =0$. Then by Theorem \ref{main} 
$\cO^\sharp_h$ is a split Azumaya algebra, that is there exists a locally free $\cO_{X'}[[h]]$-module $E$ of finite rank and an isomorphism of  $\cO_{X'}[[h]]$-algebras
$$\cO^\sharp_h \iso \End_{\cO_{X'}[[h]]}(E).$$
Using (\ref{maintheq1}) and the Azumaya property of   $\cO_h (h^{-1})$ and $\cD_{X, [\eta], h}(h^{-1})$ we observe an equivalence of categories 
\begin{equation}\label{modules}
	 \Mod(\cD_{X, [\eta], h}(h^{-1}))\iso  \Mod(\cO_h (h^{-1})) 
\end{equation}
between the category of $\cO_h (h^{-1})$-modules and  the category of $\cD_{X, [\eta], h}(h^{-1})$-modules.
The functor from left to right carries a $\cD_{X, [\eta], h}(h^{-1})$-module $M$ to $E\otimes _{\cD_{X, [\eta], h}} M$; the quasi-inverse functor takes an $\cO_h (h^{-1})$-module $N$ to 
${\cH}om_{\cO_h }(E, N)$.

Also note that if $H^1_{Zar}(X',  \cO_{X'}/\cO_{X'}^p )=0$ then the map 
$$H^0(X, \Omega^1_X) \to  H^0_{Zar}(X, \coker(\cO_X \rar{d} \Omega^1_X))$$ 
is surjective and, thus, any  restricted Poisson structure arises from a global $1$-form $\eta$. In this case  objects of  $\Mod(\cD_{X, [\eta], h}(h^{-1}))$ can be viewed as $\cD_{X, h}(h^{-1})$-modules whose $p$-curvature
equals $\eta$.

\subsection{$\bG_m$-equivariant quantizations.}

Let  $(X, \omega)$ be a symplectic variety  equipped with an action 
\begin{equation}\label{multgractionintro}
\lambda \colon \bG_m \times X \to X
\end{equation}
of the multiplicative group   such that $\omega$ has a
positive weight $m$ with respect to this action. Moreover, we shall assume that $m$ invertible in $k$. Denote by $\theta$
the Euler vector field on $X$ corresponding to the $\bG_m$-action. Then the formula
$\eta = \frac{1}{m} \iota_{\theta} \omega $ defines a restricted structure on $X$. Define a $\bG_m$-action on  $X'$  twisting  (\ref{multgractionintro}) by the $p$-th power map $\bG_m \rar{F} \bG_m$.
Also let $\bG_m$ act on $X'[h]:= X' \times \Spec k[h]$ as above on the first factor and  by $z * h = z^m h$ on the second one.  

A $\bG_m$-equivariant Frobenius-constant quantization  of $X$ is a $\bG_m$-equivariant sheaf $O_h$ of associative $\cO_{X'[h]}$-algebras on $X'[h]$, locally free as an $\cO_{X'[h]}$-module,
such that the restriction $\cO_h$ 
of $O_h$ to the formal completion of $X'[h]$ along the divisor $h=0$ is a Frobenius-constant quantization  of  $X$
compatible with the restricted structure  $[\eta]$.  Examples of $\bG_m$-equivariant  quantizations arise in geometric representation theory  (see {\it e.g.}, \cite{bk3}, \cite{bf}, \cite{bl}, \cite{kt}).

Assume that morphism  (\ref{multgractionintro}) extends to a morphism 
\begin{equation}\label{multgractionextintro}
\tilde \lambda: \bA^1 \times X \to X.
\end{equation}
Then the restriction of $O_h$ to  the open subscheme  $X'[h, h^{-1}]\mono X'[h]$ is an Azumaya algebra.  As an application of Theorem \ref{main} we prove in \S \ref{ktconjecture}
a conjecture of Kubrak-Travkin concerning the class  of this algebra in the Brauer group. 
Namely, we show that, for every $\bG_m$-equivariant Frobenius-constant quantization $O_h$, 
the following equality in $\Br(X')$ holds
$$[O_{h=1}] = [\frac{1}{m} \eta]  +  \tilde \lambda_0^*[\rho(O_h)] .$$
Here  $[\eta]$ stands for the image of $\eta$ under the canonical map $\Gamma (X', \Omega^1_{X'}) \to \Br(X')$,  $\rho(O_h)\in  H^1_{et}(X',  \cO_{X'}^*/\cO_{X'}^{* p} )$ for the class associated to the formal quantization 
via   (\ref{bktheorem}), and $[\rho(O_h)]$ for its image in the Brauer group.

 \subsection{Plan of the proof.}\label{planofproof.intro}
 Using the language of formal geometry we reduce the theorem to a group-theoretic statement. We shall start by explaining the latter. 
 
 Let $(V, \omega_V)$ be a finite-dimensional symplectic vector space over $k$, and let  $A_h$ be the algebra over $k[[h]]$ generated by the dual vector space $V^*$ subject to the relations 
 \[
 f g - g f =  \omega_V^{-1}(f, g) h, \quad f^p = 0
 \]
 for any $f, g \in V^*$. We refer to $A_h$ as the restricted {\it Weyl algebra}. This is a flat $k[[h]]$-algebra whose reduction modulo $h$ is the finite-dimensional commutative algebra of functions on the Frobenius neighborhood of the origin in the affine space $\Spec(S^{\cdot} V^*) := \bf{V}$. Explicitly, $A_0 := S^{\cdot} V^* / J_V$ where $J_V$ is the ideal generated by $f^p$ for all $f \in V^*$. The quantization $A_h$ of $A_0$ specifies a restricted Poisson structure $[\eta_V]\in \coker(A_0 \rar{d} \Omega^1_{A_0})$.\footnote{Explicitly $[\eta_V]$ is characterized as a unique homogeneous class such that $d[\eta_V] = \omega_V$.}
 
  Denote by $G$ the group scheme $\underline{\Aut}(A_h)$ of $k[[h]]$-linear automorphisms of the algebra $A_h$, by $G^{\geq 1}$ the subgroup of automorphisms identical modulo $h$, and by $G_0$ the quotient of $G$ by $G^{\geq 1}$. As shown in \cite{bk}, $G_0$ is the group scheme of automorphisms of $A_0$ preserving the class $[\eta_V]$.
 
A pair $(W, W^*)$ of transversal Lagrangian subspaces of $V$ defines an isomorphism between the algebra $A_h(h^{-1})$ and the matrix algebra $\End_{k}(S^{\cdot} W^* / J_W)((h))$ which, in turn, gives an embedding $G \mono L\PGL(p^n)$, where $L\PGL(p^n)$ is the loop group of $\PGL(p^n)$ (viewed as a sheaf for the {\it fpqc} topology). Then the extension $$1 \to \mathbb{G}_m \to \GL(p^n) \to \PGL(p^n) \to 1$$ gives rise to\footnote{We do not know if the morphism of {\it fpqc}  sheaves  $L\GL(p^n)\to  L\PGL(p^n)$  is surjective. However, we check in Proposition \ref{apploopspgl} that its pullback to any group subscheme $G\subset L\PGL(p^n)$ satisfying some finiteness assumptions is surjective even for the Zariski topology on $G$.} 
 \begin{equation}\label{G}
 1 \to L\bG_m \to \tilde G  \to   G \to 1. 
 \end{equation} In \cite{bk} it is proved that $G^{\geq 1}$ is the subgroup of inner automorphisms. Hence we have a subextension of \eqref{G}
 \begin{equation}
 1 \to L^+\bG_m \to \underline{A_h^*} \to   G^{\geq 1} \to 1,
 \end{equation} 
 where $L^+\bG_m$ is the positive loop group of $\mathbb{G}_m$.
 Then passing to the quotient we get a central extension by the affine grassmannian\footnote{Recall from \cite{cc} that $\Gra_{\bG_m}$ is isomorphic to the direct product $\hat \bW \times \underline{\bZ}$, where $\hat \bW$
 a group ind-scheme whose points with values in a $k$-algebra $R$ is the subgroup of  $R[h^{-1}]^*$ consisting of invertible polynomials with zero constant term. In particular, if $R$ is reduced and connected then 
 $\Gra_{\bG_m}(R)=\bZ$.}  
 \begin{equation}\label{ext}
 1 \to \Gra_{\bG_m} \to \tilde G_0  \to   G_0 \to 1. 
 \end{equation}
 
 Let $i: V\mono V^{\flat}$ be a morphism of symplectic vector spaces such that the restriction to $V$ of the symplectic form on $V^{\flat}$ is $\omega_V$. Let $\widetilde G_0 \to G_0$ and $\widetilde{G}_0^{\flat} \to G_0^{\flat}$ be the corresponding extensions. We emphasize that $\widetilde G_0$ and $\widetilde G_0^{\flat}$ depend on a choice of Lagrangian pairs in $V$ and $V^{\flat}$.
 
 Finally, denote by $G_0^{\sharp}\subset G_0^\flat $ the group subscheme that consists of automorphisms preserving the kernel of the homomorphism $i^*: A_0^{\flat} \to A_0$. 
 We have a natural homomorphism $G_0^{\sharp} \to G_0$.
 In \S \ref{Skeyproposition} we prove the following assertion:  

 	{\bf Basic lemma.} {\it The homomorphism $G_0^{\sharp} \to G_0$ lifts uniquely to a homomorphism of central extensions
 	$$ \widetilde G_0^\flat  \times _{G_0^\flat}  G_0^{\sharp} \to \widetilde G_0.$$}

 Our proof of the Basic Lemma, that occupies almost the half of the paper, is based on a new construction of  (\ref{ext})  that makes this functoriality property obvious. Namely, consider two subgroups $\alpha \subset G_0 \supset G_0^0$, where   $G_0^0$ is the subgroup of automorphisms preserving the origin in $\bf{V}$ (which by a result of Bezrukavnikov and Kaledin coincides with the reduced subgroup of $G_0$) and 
 $\alpha=\Spec A_0 (\cong \alpha_p^{\dim V})$ 
 is the finite group scheme of translations. Then the product map $ \alpha \times G_0^0 \to G_0$ induces an isomorphism of the underlying schemes.  Let $ \widetilde \alpha $ be the restriction of  the central extension  (\ref{ext})
 to $\alpha$. This is a version of the Heisenberg group. 
We show that the extension  (\ref{ext}) splits uniquely over 
 the reduced subgroup $G_0^0$ \footnote{ A posteriori, this is a corollary of the Basic Lemma applied to the embedding $0 \mono V$.}. Thus we can view $G_0^0$ as a subgroup of $\widetilde{G_0}$, and the quotient $\widetilde{G_0}/G_0^0$  is  identified (as an ind-scheme)  with $\widetilde{\alpha}$. 
 The left action of $\widetilde{G_0}$ on  $\widetilde{G_0}/G_0^0$ defines an embedding of $\widetilde{G_0}$ into the group  of automorphisms of $\widetilde{\alpha}$
 viewed as a space with an action of $\Gra_{\bG_m}$. We prove in Theorem \ref{widetildeG_0} that the image of this embedding  is precisely the group of automorphisms that preserve a unique $\Sp_{2 n} \ltimes \alpha_{p}^{2 n}$-invariant connection
 on $\Gra_{\bG_m}$-torsor $\widetilde \alpha$.
 
 To derive the basic lemma from the above we classify all central extensions of  $\alpha $ by $ \Gra_{\mathbb{G}_m}  $ in \S \ref{alphaext}. In particular, we show that extensions of $\alpha$ by $ \Gra_{\mathbb{G}_m}  $ that split over every $\alpha_{p}$ factor are classified by $\Lie(\Gra_{\mathbb{G}_m})$-valued skew-symmetric $2$-forms on $\Lie(\alpha)$. 
 Then it follows that the morphism $\alpha \rightarrow \alpha^{\flat}$ induced by $i$ lifts uniquely to a morphism of extensions $\widetilde{\alpha} \rightarrow \widetilde{\alpha}^{\flat}$ respecting the connections. Since $\widetilde G_0^\flat  \times _{G_0^\flat}  G_0^{\sharp}$ is the group of automorphisms of $\widetilde{\alpha}^{\flat}$ that preserve the connection and the subspace $\widetilde{\alpha} \mono \widetilde{\alpha}^{\flat}$, by restriction we get the desired lifting $\widetilde G_0^\flat  \times _{G_0^\flat}  G_0^{\sharp} \to \tilde G_0.$

Let us explain how the Basic Lemma implies the theorem. The Bezrukavnikov-Kaledin construction of Frobenius-constant quantizations is based on a characteristic $p$ version of the Gelfand-Kazhdan formal geometry. Namely, it is shown in \cite{bk3} that any Frobenius-constant quantization is locally for the \textit{fpqc} topology on $X^{\prime}$ isomorphic to the constant quantization $\mathcal{O}_{X^{\prime}}[[h]] \otimes_{k[[h]]} A_h$ for a fixed finite-dimensional space $V$ of dimension $2n = \dim X$. It follows that a Frobenius-constant quantization $(X, \mathcal{O}_h, s)$ gives rise to a torsor $\mathcal{M}_{X, \mathcal{O}_h, s}$ over $G$. Conversely, the algebra $\mathcal{O}_h$ is the twist of  $\mathcal{O}_{X^{\prime}}[[h]] \otimes_{k[[h]]} A_h$ by the torsor $\cM_{X, \mathcal{O}_h, s}$, i.e.
$$\cM_{X, \mathcal{O}_h, s} {\times} ^{G} (\mathcal{O}_{X^{\prime}}[[h]] \otimes_{k[[h]]} A_h) \iso \mathcal{O}_h.$$

The reduction of differential operators $\cD_{X, [\eta],  h}$ also can be constructed using formal geometry.
Namely, choosing a homogeneous form $\eta_V$ in the class $[\eta_V]$ on $V$ consider its graph 
$$\bf{V} \mono \mathbb{T}^*_{\bf{V}},$$
and let $i : V \to V\oplus V^*= V^{\flat}$ be the corresponding linear map of vector spaces. Let $G_0^{\sharp, f}$ be the subgroup of $G_0^{\sharp} \subset G_0^{\flat}$ of automorphisms $g$ of $\alpha^{\flat}$ respecting the fibers of the projection $\pi : \alpha^{\flat} \to \alpha$, that is fitting in

	\[
\begin{tikzcd}
\alpha^{\flat} \arrow[r, "g"]\arrow[d, "\pi"]& \alpha^{\flat} \arrow[d, "\pi"]  \\
 \alpha \arrow[r, "\bar{g}"] &  \alpha.
\end{tikzcd}
\]
Then the restriction of the natural map $G_0^{\sharp} \to G_0$ to $G_0^{\sharp, f}$ is an isomorphism. This yields a homomorphism $\psi_0 : G_0 \mono G_0^{\sharp} \subset G_0^{\flat}$. In \S \ref{Dformal} we construct a lifting $\psi : G \to G^{\flat}$ of $\psi_0$ that makes $\mathcal{D}_{X, [\eta], h}$ a twist of $\mathcal{O}_{X^{\prime}}[[h]] \otimes_{k[[h]]} A_h^{\flat}$ by $\cM_{X, \mathcal{O}_h, s}$. 

Then we consider the diagram 
\[
\begin{tikzcd}
& & \Gra_{\bG_m} \arrow[d, ""]\\
 & & L\GL(p^{3n}) / L^+ \mathbb{G}_m \arrow[d, ""]  \\
G \arrow[rru, dashrightarrow,  ""]\arrow[r, ""]& \underline{\Aut}(A_h \otimes A_h^{\flat, op}) \arrow[r, ""] &  L\PGL(p^{3n}).
\end{tikzcd}
\]

Here the loop group $L\PGL$ is the group ind-scheme of projective automorphisms of a certain vector space $U$ over $k((h))$. Suppose we can construct a $G$-invariant $k[[h]]$-lattice $\Lambda$ in $U$. Then
 $$\cO^\sharp_h: = \cM_{X, \cO_h, s} \times ^G  (\mathcal{O}_{X^{\prime}}[[h]] \otimes_{k[[h]]} \End_{k[[h]]}(\Lambda))$$
 does the job for the first part of the theorem. By a general result proven in Appendix (Proposition \ref{apploopspgl}) the existence of an invariant lattice is equivalent to the existence of the dashed arrow making the diagram above commutative. This is where we use the Basic Lemma. It follows from the latter that 
 	$$ \widetilde G_0^\flat  \times _{G_0^\flat}  G_0^{\sharp, f} \cong \widetilde G_0.$$
 We infer (Proposition \ref{keyprop}) that $\psi_0$ lifts to $\widetilde \psi_0 : \widetilde G_0 \to \widetilde G_0^{\flat}$, which implies the existence of the dashed arrow.

The proof of the second part of the theorem  amounts to unveiling the Bezrukavnikov-Kaledin construction of the map $\rho$.

\subsection{Further directions.}\label{further.directions.intro}
In this subsection we briefly discuss some applications of Theorem \ref{main} obtained in a sequel paper joint with Dmitry Kubrak and Roman Travkin \cite{bktv}.

According to the Bezrukavnikov-Kaledin theorem from  \S\ref{classification} every  smooth affine restricted symplectic  variety admits a unique up to a non-canonical  isomorphism Frobenius-constant  quantization $O_h$ with 
$\rho[O_h]=0$. The formation of $O_h$ is not functorial in $(X, [\eta])$. However, we show in \cite{bktv} that the assignment $(X, [\eta]) \mapsto \Mod(O_h(X)) $ extends to a contravariant functor from the category of smooth affine restricted symplectic varieties and open embeddings to the category of abelian categories. Applying the right Kan extension this yields a functorial quantization $\QCoh_h$ of the category of quasi-coherent sheaves of any smooth restricted symplectic variety.
Moreover, using Remark \ref{extension} and equivalence  (\ref{modules}) we extend the range of  quantum parameter $h$ from being a formal variable to a genuine coordinate on $\bP^1$. The construction of $\QCoh_h$ uses in an essential way Corollary  \ref{refforbktv} of the Basic Lemma. 

Let $Y\mono X$ be a smooth Lagrangian subvariety such that  $[\eta]_{|Y} = 0$ in 
$H_{Zar}^{0}\left(Y,  \Omega_{Y}^{1}/d\cO_Y\right)$. Using results from \cite{Mu}  we show in  \S6.1 of  \cite{bktv} that every such $Y$ determines a canonical object in $\QCoh_h$, which is a quantization of the line bundle $(\Omega_{Y}^n)^{\frac{1-p}{2}}$ viewed as a quasi-coherent sheaf on $X$.

\subsection{Plan of the paper.}   In \S \ref{SBKconstruction} we review the Bezrukavnikov-Kaledin construction of Frobenius-constant quantizations which is based on a characteristic $p$ version of the Gelfand-Kazhdan formal geometry. In \S \ref{Sreduction} we recast the construction of $\cD_{X,  h}$ using the language of formal geometry and reduce Theorem \ref{main}  to a certain statement on central
extensions of the group of automorphisms of the restricted Weyl algebra. In \S \ref{Skeyproposition}  we prove this statement. In \S \ref{ktconjecture} we study $\mathbb{G}_m$-equivariant quantizations and prove a conjecture of Kubrak and Travkin. Finally, in the Appendix we prove some results (probably known to experts) on loop groups that are used in the main body of the paper.

\subsection{Acknowledgements.} The paper has grown  out of our attempt to correct an error in \cite[Proposition 1.24]{bk}. This proposition asserts, in particular,  that the algebra $\cO_h(h^{-1})$ extends to an Azumaya algebra over the formal scheme $X'[[h]]$.
In fact, the result stated in {\it loc. cit.} is similar to our formula (\ref{formularestrintro}) with the exception that the class $[\cO^\sharp_h]$
  at the left-hand side of  (\ref{formularestrintro}) is replaced by the class of an extension of $\cO_h(h^{-1})$ to $X'[[h]]$. In particular, \cite[Proposition 1.24]{bk} asserts that the Azumaya algebra 
$\cO_h(h^{-1})$  corresponding to  the $\bZ/2\bZ$-equivariant Frobenius-constant  quantization  splits which is definitely not the case since  the algebra $\cO_h$  has no zero divisors. 
In \S \ref{Sreduction}, Remark \ref{errorinbk},  we indicate
where the error in the proof of \cite[Proposition 1.24]{bk} is. In particular, we will see that $\cO_h(h^{-1})$ never extends to an Azumaya algebra over  $X'[[h]]$.

The authors would like to express their gratitude to Dmitry Kaledin.  Numerous conversations with Dmitry and his encouragements 
helped us to overcome many of the technical and conceptual difficulties we encountered in the course of the work. The authors also benefited greatly from conversations with Michael Finkelberg and Alexander Petrov on
the affine grassmannian and rigid analytic geometry respectively. We are grateful to Roman Travkin who explained to us that one should look at quantizations of the category $\QCoh(X)$ rather than  
quantizations of the algebra  $\cO_X$. This has led to  a revision of this paper as well as the appearance of a sequel paper \cite{bktv}  joint with Dmitry Kubrak and  Roman Travkin. 
Finally, we would like to thank the referee for a meticulous review.

Both authors were partially supported by the Russian Science Foundation, grant
N\textsuperscript{\underline{o}}~$21-11-00153$.

\section{Review of the Bezrukavnikov-Kaledin construction}\label{SBKconstruction}
For reader's convenience we  review the Bezrukavnikov-Kaledin construction of quantizations. We also introduce some notations to be used later. Nothing in this section is an invention of the authors.
\subsection{Darboux Lemma in characteristic $p$.} 
Our proof  of Theorem \ref{main}, as well as the Bezrukavnikov-Kaledin  construction of quantizations, is based on a version of the Gelfand-Kazhdan formal geometry that makes it possible to localize the problem and ultimately reduce it to a statement in group theory. 
The main idea is as follows.  For a symplectic variety $X$, the Poisson  bracket on $\cO_X$ is $\cO_{X'}$-linear. Therefore, we can view  $X$  as a Poisson scheme over $X'$.  For any restricted structure on $\cO_X$, one has $\cO_{X'}^{[p]}=0$. Therefore, a symplectic variety $X$ with a restricted structure can be viewed as a restricted  Poisson scheme over $X'$.
Consider the constant restricted  Poisson scheme over $X'$:
\begin{equation}\label{standard}
X' \times \Spec A_0 \to X',
\end{equation}
where 
$$A_0= k[x_1, y_1, \cdots, x_{n}, y_n]/(x_1^p, y_1^p, \cdots, x_{n}^p, y_n^p)$$
$2n=\dim X$, the morphism (\ref{standard}) is the projection to the first factor, the Poisson structure is given by symplectic  form $\sum_i  dy_i\wedge dx_i$, and the restricted structure is determined by $x_i^{[p]} = y_i^{[p]}=0$. A key insight of Bezrukavnikov and Kaledin is that any smooth symplectic variety $X$ with a restricted structure, viewed as   a restricted  Poisson scheme over $X'$, is
locally for the {\it fpqc} topology on $X'$ isomorphic to the constant restricted  Poisson scheme  $X' \times \Spec A_0 \to X'$. This is an analogue of the Darboux Lemma. 
\subsection{Quantum Darboux Lemma.} 
There is also  a quantum version of the Darboux Lemma proven in \cite{bk}: for any Frobenius-constant quantization $(X, \cO_h, s)$, the sheaf of associative $\cO_{X'}[[h]]$-algebras $\cO_h$ is isomorphic locally for the {\it fpqc} topology on $X'$ to the $h$-completed tensor product $\cO_{X'}\otimes _k A_h$, where $A_h$ is {\it the reduced Weyl algebra} that is  the $k[[h]]$-algebra generated by variable $x_i, y_i$, $(1\leq i,j, \leq n)$, subject to the relations
\begin{equation}\label{weil}
 y_j x_i - x_i y_j =\delta_{ij} h, \quad  x_i^{p} = y_i^{p}=0.
\end{equation} 
\subsection{Formal Geometry.}\label{section.fg}
Let $\underline{\Aut}(A_0)$ be the group scheme of automorphisms of the algebra $A_0$.  For any smooth scheme $X$ over $k$ of dimension $2n$, assigning to a scheme $Z$ over $X'$ the set 
$\cM_X(Z)$ of isomorphisms
$$ Z\times \Spec A_0 \iso Z \times _{X'} X$$
of schemes over $Z$, we get a $\underline{\Aut}(A_0)$-torsor over $X'$. Next, let $G_0\subset  \underline{\Aut}(A_0)$ be the group subscheme consisting of automorphisms of $A_0$ that preserve the restricted Poisson
structure on $A_0$. Then the   Darboux Lemma above implies that, for every  symplectic  variety $(X, [\eta])$ with a restricted structure  of dimension $2n$ the functor assigning
to a scheme $Z$ over $X'$ the set 
$\cM_{X, [\eta]}(Z)$ of isomorphisms
$$ Z\times \Spec A_0 \iso Z \times _{X'} X$$
of restricted Poisson schemes over $Z$ is a $G_0$-torsor over $X'$. Using the faithfully flat descent one gets a bijection  between the set of nondegenerate (that is arising from a symplectic form) restricted Poisson structures  $[\eta]$  on $X$ and the set of $G_0$-torsors over $X'$ equipped with an isomorphism of $\Aut(A_0)$-torsors
$$\underline{\Aut}(A_0)\times ^{G_0}  \cM_{X, [\eta]} \iso  \cM_X.$$
Lastly, the set of all Frobenius-constant quantizations $(X, \cO_h, s)$ of $X$ such that the induced Poisson structure on $X$ is nondegenerate is in bijection with the set of
torsors   $\cM_{X, \cO_h, s}$ over  the group scheme $G: = \underline{\Aut}(A_h)$ of automorphisms of $k[[h]]$-algebra $A_h$ (that is a group scheme whose group of points with values in a $k$-algebra $R$ is the group
of $R[[h]]$-algebra automorphisms of the $h$-adically completed tensor product    $A_h \hat{\otimes} R$)
together with an isomorphism 
$\underline{\Aut}(A_0)$-torsors
\begin{equation}\label{G_0comp}
\underline{\Aut}(A_0)\times ^{G}  \cM_{X, \cO_h, s} \iso  \cM_X.
\end{equation} 
In particular,  for a symplectic variety with a restricted structure $(X, [\eta])$, giving a Frobenius constant quantization of  $(X, [\eta])$ is equivalent to lifting a 
$G_0$-torsor $\cM_{X, [\eta]} $ to a $G$-torsor $\cM_{X, \cO_h, s} $ along the group scheme homomorphism 
\begin{equation}\label{agrphom}
G  \to G_0.
\end{equation} 
\subsection{Automorphisms of the  reduced Weyl algebra.}\label{subsectionautomorphisms}
It is shown in \cite{bk} that homomorphism (\ref{agrphom}) is surjective and its kernel $G^{\geq 1}$ consists of inner automorphisms. We have the exact sequence
\begin{equation}\label{fes}
 1\to L^+\bG_m   \to \underline {A}_h ^* \to G \to  G_0 \to 1.
\end{equation} 
Here $ \underline A_h ^*$  (resp. $L^+\bG_m $ ) is the group scheme over $k$ whose group of $R$-points is $(R\otimes A_h)^*$  (resp. $R[[h]]^*$).
Letting $G^{\geq n} \subset  G$,  ($n\geq 0$), be the group  subscheme consisting of automorphisms that are identical modulo $h^{n}$, we have that 
$ G^{\geq n}/G^{\geq n+1}\iso  \underline {A}_0/\bG_a$ for every $n>1$ and $ G^{\geq 1}/G^{\geq 2}\iso \underline {A}_0^*/\bG_m$,  $ G^{\geq 0}/G^{\geq 1}\iso G_0$.

Consider the isomorphism of $k[[h]]$-algebras 
\begin{equation}\label{Z/2Z-equivariantstronA_h}
 \alpha:  A_{-h}^{op}\iso A_h
 \end{equation}
sending    $x_i$ to $x_i$ and $y_j$ to $y_j$.  The conjugation by $\alpha$ defines an involution $\tau: G\to G$ preserving the subgroups $G^{\geq n}$, ($n\geq 0$), such that the induced action on 
$ G^{\geq n}/G^{\geq n+1}$ takes an element $g$ to $g^{(-1)^n}$.
 In particular, it follows that the extension 
$$         1\to  G^{\geq 1}/G^{\geq 2}\to           G /G^{\geq 2} \to G_0 \to 1$$
has a unique $\bZ/2\bZ$-equivariant splitting   $$G /G^{\geq 2}  \iso G_0 \ltimes \underline{A}_0^*/\bG_m. $$

\subsection{$\bZ/2\bZ$-equivariant structures.}
 Any $\bZ/2\bZ$-equivariant  Frobenius-constant quantization  $(X, \cO_h, s, \alpha)$ is isomorphic locally for  the {\it fpqc} topology on $X'$ to the $h$-completed tensor product $\cO_{X'}\hat \otimes _k A_h$ equipped with the
 equivariant structure (\ref{Z/2Z-equivariantstronA_h}). Indeed, consider   the action of $\bZ/2\bZ$ on $G^{\geq 1}$ given by $\tau$. Then $H^1(\bZ/2\bZ, G^{\geq 1})=0$ as $ G^{\geq 1}$ has a filtration $G^{\geq n}$ with 
 uniquely $2$-divisible quotients. It follows that every two $\bZ/2\bZ$-equivariant structures on $\cO_{X'}\hat \otimes _k A_h$  are locally isomorphic. Now the claim follows from the quantum  Darboux Lemma.

Consequently, giving  a $\bZ/2\bZ$-equivariant  Frobenius constant quantization of  $(X, [\eta])$ is equivalent to lifting a 
$G_0$-torsor $\cM_{X, [\eta]} $ to a torsor $\cM_{X, \cO_h, s, \alpha} $ over the subgroup $G^{\tau =1} \subset G$ of $\tau$-invariants.
\subsection{Construction of quantizations.}
Bezrukavnikov and Kaledin use the standard obstruction theory to classify liftings of a given $G_0$-torsor to a $G$-torsor. Namely, 
given a $G_0$-torsor  $\cM_{X, [\eta]}$ the set of isomorphism classes of its liftings  to a torsor
over $G/G^{\geq 2}\iso  G_0 \ltimes \underline{A}_0^*/\bG_m $ is identified with  the set of isomorphism classes of torsors over  the smooth group scheme $\underline{A}_0^*/\bG_m  \times _{G_0}  \cM_{X, [\eta]}$ over $X'$. 
The latter group scheme is identified with the quotient of the group scheme of invertible elements in the sheaf of $\cO_{X'}$-algebras $F_*\cO_X$ by constant group scheme $\bG_m \times X'$.   
Using
smoothness of $\underline{A}_0^*/\bG_m  \times _{G_0}  \cM_{X, [\eta]}$ every torsor over this group scheme is locally trivial for the \'etale topology on $X'$. Hence,  the set of isomorphism classes of
$\underline{A}_0^*/\bG_m  \times _{G_0}  \cM_{X, [\eta]}$-torsors is in bijection with 
$H^1_{et}(X', \cO_{X'}^*/\cO_{X'}^{p *}) $.
This defines a map of sets
\begin{equation}\label{bktheoremexpl}
\rho: Q(X, [\eta]) \to  H^1_{et}(X',  \cO_{X'}^*/\cO_{X'}^{* p} )
 \end{equation}  
 from the set  $Q(X, [\eta])$  of isomorphisms classes of Frobenius-constant quantizations 
$(X, \cO_h, s)$ compatible with $[\eta]$ to the \'etale cohomology group classifying torsors over $G_0 \ltimes \underline{A}_0^*/\bG_m $  lifting 
the $G_0$-torsor   $\cM_{X, [\eta]}$.  Note that under this identification  the trivial cohomology class corresponds to the lifting obtained from $\cM_{X, [\eta]}$ via the natural group homomorphism
$ G_0 \to  G_0 \ltimes \underline{A}_0^*/\bG_m$.

Next,  the obstruction class to lifting of a
$G/G^{\geq n}$-torsor, with $n>1$, to  a $G/G^{\geq n+1}$-torsor lies in $H^2(X', \cO_{X'} /\cO_{X'}^p)$. If the obstruction class vanishes then the set  of isomorphism classes of the liftings is a torsor over  
$H^1(X', \cO_{X'} /\cO_{X'}^p)$.
Hence, if  $H^1_{Zar}(X',  \cO_{X'}/\cO_{X'}^p )=0$ then  $\rho $ is injective and if  $H^2_{Zar}(X',  \cO_{X'}/\cO_{X'}^p )=0$ then  $\rho $ is surjective. In particular, if the two cohomology groups vanish 
$\rho$ is a bijection.
 The trivial cohomology class corresponds to a quantization that admits (a unique) $\bZ/2\bZ$-equivariant structure.

\section{Reduction of the main theorem to a group-theoretic statement}\label{Sreduction}
 In this section we recast the construction of $\cD_{X, [\eta], h}$ using the language of formal geometry, 
 and reduce Theorem \ref{main}  to a certain statement, Proposition \ref{keyprop}, on central
extensions of the group of automorphisms of the restricted Weyl algebra. 
\subsection{$\cD_{X, [\eta],  h}$ via formal geometry.}\label{Dformal}
 Let
$A_h^{\flat}$ be the reduced Weyl algebra in $4n$ variables, that is the $k[[h]]$-algebra generated by variables $x_i, y_i, v_i, u_i$, $(1\leq i,j, \leq n)$, subject to the relations
\begin{equation}\label{weild}
\begin{split}
v_i x_j - x_j v_i =  u_i y_j - y_j u_i =\delta_{ij} h, \\
v_i y_j - y_j v_i = u_i x_j - x_j u_i = v_i u_j - u_j v_i = y_i x_j - x_j y_i =0 \\
  x_i^{p} = y_i^{p}=  v_i^{p} = u_i^{p} =0.
\end{split}
\end{equation} 
We shall identify $A_h^{\flat}$ with the central reduction $D_{\Spec A_0, \eta_A, h} (= D_{\Spec A_0, 0, h})$, $\eta_A =\eta=\sum y_idx_i $, of the algebra $D_{\Spec A_0, h}\subset D_{\Spec A_0}[[h]]$ spanned by $A_0$ and 
$h T_{\Spec A_0}$. In particular, the group scheme $\underline{\Aut} (A_0)$ acts on   $A_h^{\flat}$:
\begin{equation}\label{empty}
\psi_{can}: \underline{\Aut} (A_0)\to \underline{\Aut}_{k[[h]]}(A_h^{\flat})=: G^{\flat},\quad  g\mapsto \psi_{can, g}.
\end{equation}
We define a homomorphism
\begin{equation}\label{action1}
\psi: G_0 \to G^{\flat},
\end{equation}
to be the restriction of $\psi_{can}$ to $G_0 \subset   \underline{\Aut} (A_0) $  twisted  by a $1$-cocycle 
$$G_0 \to G^{\flat},  \quad g\mapsto \phi_{g^* \eta - \eta}.  $$
Namely, for any $k$-algebra $R$, an $R$-point  of $G_0$
is an automorphism $g$  of the  $R$-algebra $A_0\otimes R$ such that the $1$-form $\mu:= g^* \eta - \eta \in \Omega_{A_0\otimes R/R} ^1$, $\eta=\sum y_i dx_i$, is exact. 
Let $\phi_\mu: A_h^{\flat} \hat{\otimes} R \to A_h^{\flat} \hat{\otimes} R$ be the $R[[h]]$-algebra automorphism given by the formulas
$$\phi_{\mu}(x_i)=x_i,  \quad \phi_{\mu}(y_i)=y_i,  $$
 $$\phi_{\mu}(v_i)= v_i +\iota_{ \frac{\partial}{\partial x_i}} \mu,  \quad \phi_{\mu}(u_i)= u_i +\iota_{ \frac{\partial}{\partial y_i}} \mu.\footnote{Let us verify that $\phi_\mu$ is an algebra automorphism. The fact that the formulas above define an automorphism of  $\cD_{\Spec A_0, h}\hat{\otimes} R$ is clear because $\mu$ is closed. To check that this automorphism descends to $\cD_{\Spec A_0, [0], h}\hat{\otimes} R$ we need to show that the following identities hold $\cD_{\Spec A_0, [0], h}\hat{\otimes} R$.
 $$(v_i +\iota_{ \frac{\partial}{\partial x_i}}\mu )^p=  (u_i +\iota_{ \frac{\partial}{\partial y_i}}\mu)^p =0.$$ 
 Using the Katz formula  \cite[\S 7.22]{katz}  and the exactness of $\mu$ we find  that
 $$(v_i +\iota_{ \frac{\partial}{\partial x_i}}\mu )^p= v_i ^p + (\iota_{ \frac{\partial}{\partial x_i}}\mu)^p= (\iota_{ \frac{\partial}{\partial x_i}}g^*\eta)^p -  (\iota_{ \frac{\partial}{\partial x_i}}\eta)^p =0,$$
 becase $\eta$ vanishes at the origin. The second relation is proven similarly. 
           }$$
 Define (\ref{action1}) by the formula
 $$\psi_g =\phi_{g^* \eta - \eta} \circ   \psi_{can, g}.$$ 
 We claim that (\ref{action1}) is a homomorphism. Indeed, one has that
 $$ \psi_{can, g} \circ \phi_\mu \circ \psi_{can, g}^{-1}= \phi_{g^*\mu}.$$
 Using this formula we find
 $$\phi_{g_1^* \eta - \eta} \circ   \psi_{can, g_1} \circ \phi_{g_2^* \eta - \eta} \circ   \psi_{can, g_2}= \phi_{g_1^* \eta - \eta} \circ \phi_{g_1^* g_2^* \eta - g^*_1\eta} \circ  \psi_{can, g_1}  \circ   \psi_{can, g_2}$$
 %$$ \phi_{g_1^* g_2^* \eta - \eta} \circ  \psi_{can, g_1g_2},$$
 and the claim follows. 
 
 The key assertion of this subsection is the following.
 \begin{lm}\label{lemmalocD_h}
Let $(X, \omega)$ be a symplectic variety with a  restricted Poisson structure $[\eta]$. Then  one has an isomorphism of $\cO_{X'}[[h]]$-algebras:
$$\cM_{X, [\eta]} \hat{\times} ^{G_0} A_h^{\flat} \iso \cD_{X, [\eta], h},$$
where the action of $G_0$ on $A_h^{\flat}$ is given  by (\ref{action1}).
\end{lm}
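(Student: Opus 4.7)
The plan is to verify the asserted isomorphism of sheaves of $\cO_{X'}[[h]]$-algebras on $X'$ locally, using the fact that both sides are functorially associated with the datum $(X,[\eta])$, and then to match the gluing cocycles. Both sheaves come with a canonical presentation as a twist of the constant sheaf $A_h^\flat\,\hat\otimes\,\cO_{X'}[[h]]$, and Lemma \ref{lemmalocD_h} amounts to showing that these twists are given by the same $G_0$-cocycle valued in $\underline{\Aut}_{k[[h]]}(A_h^\flat)$.

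First, I would fix an \textit{fpqc}-chart $U\to X'$ on which $\cM_{X,[\eta]}$ admits a section $\tau\colon U\times_{X'}X\iso U\times\Spec A_0$ of restricted Poisson schemes. Pulling back the canonical $1$-form $\eta_A=\sum_i y_i\,dx_i$ on $\Spec A_0$ yields a representative $\eta_U=\tau^*\eta_A$ of $[\eta]|_U$. Combining the construction of the central reduction $\cD_{X,[\eta],h}$ from \S\ref{intro.centralred} with the identification $A_h^\flat=\cD_{\Spec A_0,\eta_A,h}$ recalled in \S\ref{Dformal}, the chosen section $\tau$ produces an isomorphism $\cD_{X,[\eta],h}|_U\iso \cO_U[[h]]\,\hat\otimes\,A_h^\flat$. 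By definition of the associated-bundle construction, $\tau$ also furnishes an isomorphism $(\cM_{X,[\eta]}\hat\times^{G_0}A_h^\flat)|_U\iso \cO_U[[h]]\,\hat\otimes\,A_h^\flat$. Hence both sides are locally the constant sheaf with fiber $A_h^\flat$.

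Next, I would compare the two descriptions on overlaps. Given two sections $\tau_1,\tau_2$ related by $\tau_2=\tau_1\circ g$ for some $g\in G_0(U)$, the corresponding representatives $\eta_U^{(i)}=\tau_i^*\eta_A$ differ by an exact form $\mu=g^*\eta_A-\eta_A$ (exactness follows from $g\in G_0$). On the $\cD_{X,[\eta],h}$ side the transition decomposes as the composite of (i) the change-of-representative isomorphism $\phi_\mu\otimes t_\mu$ described at the start of \S\ref{intro.centralred}, followed by (ii) the geometric pullback along the coordinate change $g$, which on the factor $A_h^\flat=\cD_{\Spec A_0,\eta_A,h}$ is by definition $\psi_{can,g}$. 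The composite $\phi_\mu\circ\psi_{can,g}$ is precisely $\psi_g$ of \eqref{action1}. This is exactly the transition rule built into $\cM_{X,[\eta]}\hat\times^{G_0}A_h^\flat$, so \textit{fpqc} descent yields a global isomorphism of $\cO_{X'}[[h]]$-algebras.

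The main obstacle I anticipate is the bookkeeping in the second step: cleanly separating the geometric coordinate change (captured by $\psi_{can,g}$) from the change of chosen representative of $[\eta]$ (captured by $\phi_\mu$), and checking that $\phi_\mu\circ\psi_{can,g}$ descends from $\cD_{\Spec A_0,h}$ to its central reduction $A_h^\flat$. The latter uses the exactness of $\mu$ via the Katz formula, exactly as in the argument that $\phi_\mu$ is well-defined on $A_h^\flat$ given in the footnote to \eqref{action1}. One must additionally verify that the cocycle identity $\psi_{g_1g_2}=\psi_{g_1}\circ\psi_{g_2}$ established in \S\ref{Dformal} matches the triple-overlap associativity $(\phi_{\eta_1-\eta_2}\otimes t_{\eta_1-\eta_2})\circ(\phi_{\eta_2-\eta_3}\otimes t_{\eta_2-\eta_3})=\phi_{\eta_1-\eta_3}\otimes t_{\eta_1-\eta_3}$ from \S\ref{intro.centralred}. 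Once these compatibilities are in hand, the lemma follows.
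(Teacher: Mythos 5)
Your proposal is correct and follows essentially the same route as the paper. The paper phrases the argument in the equivalent "for every $S$-point $f$ of $\cM_{X,[\eta]}$, construct an isomorphism $\alpha_f$ to the constant fiber and check $G_0$-equivariance" format, using a fixed local representative $\eta$ of $[\eta]$ and the correction $\phi_{\eta_A - f^*\eta}$ (then patching over a cover where $[\eta]$ is represented by a form); you instead use the $\tau$-dependent representative $\tau^*\eta_A$, which pushes the same correction into the transition cocycle $\psi_g=\phi_{g^*\eta_A-\eta_A}\circ\psi_{can,g}$ — a cosmetic reshuffling of the same bookkeeping, with the paper's "compatibility with the $G_0$-action is straightforward" being precisely the cocycle verification you spell out.
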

\begin{proof}
		Let $\pi : \cM_{X, [\eta]} \rightarrow X^{\prime}$ be the projection. For a morphism $u\colon T \to X'$, we shall denote by $u^* \cD_{X, h, [\eta]}$ the pullback $\cD_{X, h, [\eta]}$, viewed as a coherent sheaf on
		the formal scheme $X'[[h]]$, along the morphism $T[[h]] \to X'[[h]]$ induced by $u$. 
	It suffices to check that for every $S$-point $f$ of $\cM_{X, [\eta]}$ there is an isomorphism 
	\[
	\alpha_f : \mathcal{O}_S \hat{\otimes} A_h^{\flat} \cong (\pi \circ f)^* \cD_{X, h, [\eta]}
	\]
	
	such that diagram 
	\[
	\begin{tikzcd}
	\mathcal{O}_S \hat{\otimes} A_h^{\flat} \arrow[r, "\alpha_f"]\arrow[d, "\psi(g)"]& (\pi \circ f)^* \cD_{X, h, [\eta]}\arrow[equal]{d}   \\
	\mathcal{O}_S \hat{\otimes} A_h^{\flat} \arrow[r, "\alpha_{gf}"] & (\pi \circ gf)^* \cD_{X, h, [\eta]}
	\end{tikzcd}
	\]
	is commutative for every $g\in G_0(S)$.
	
	Construct $\alpha_f$ as follows. 
	By definition of $\cM_{X, [\eta]}$ the point $f$ determines an isomorphism $S \times \Spec(A_0)  \iso S \times_{X^{\prime}} X $  also denoted by $f$ fitting into
	 the commutative diagram
	
	\[
	\begin{tikzcd}
	S \times \Spec(A_0) \arrow[r, "\cong f"]\arrow[dr, ""]& S \times_{X^{\prime}} X \arrow[r, ""]\arrow[d, "pr_{S}"]& X \arrow[d, "F"]  \\
	& S \arrow[r, "\pi \circ f"] &  X^{\prime}.
	\end{tikzcd}
	\]
		
	This induces an isomorphism of the corresponding algebras of differential operators.
	\[
	\cO_S \otimes D_{\Spec(A_0)} \cong D_{S \times \Spec(A_0) / S} \cong (pr_S)_* D_{S \times_{X^{\prime}} X / S} \cong (\pi \circ f)^*  F_* D_{X / X^{\prime}} \cong (\pi \circ f)^*  F_*D_X.
	\]
	
	Applying the Artin-Rees construction we get
	
	\[
	f_*: \cO_S \otimes D_{A_0, h} \iso(\pi \circ f)^*D_{X,h}
	\]
	First, we assume that the class $[\eta]$ is represented by a global $1$-form $\eta$. Then the sheaf  $\cD_{X, h, [\eta]}$ is obtained from $D_{X, h}$ as 
	the $h$-completion of the quotient $D_{X, h}/I_{\Gamma_{\eta}}D_{X, h} $
	(see formula (\ref{graphrestriction})). The algebra $A_h^{\flat}$ is  the quotient  $ D_{A_0, h}/I_{\Gamma_{\eta_A}}D_{A_0, h}$. The desired isomorphism  $\alpha_f$ is defined from the commutative diagram.
   $$
\def\normalbaselines{\baselineskip20pt
\lineskip3pt  \lineskiplimit3pt}
\def\mapright#1{\smash{
\mathop{\to}\limits^{#1}}}
\def\mapdown#1{\Big\downarrow\rlap
{$\vcenter{\hbox{$\scriptstyle#1$}}$}}
\begin{matrix}
 \cO_S \otimes D_{A_0, h} &\rar{f_* \circ \phi_{\eta_A - f^*\eta}} &    (\pi \circ f)^*D_{X,h} \cr
\mapdown{}  &&  \mapdown{}\cr
 \mathcal{O}_S \hat{\otimes} A_h^{\flat} & \rar{\alpha_f} &    \cD_{X, h, [\eta]}.
\end{matrix}
 $$
One checks that $\alpha_f$  is independent of the choice of representative $\eta$ for $[\eta]$. Therefore, covering $X'$ by open  subsets where $[\eta]$ is represented by a $1$-form, we can patch $\alpha_f$ from local pieces.  The compatibility with the action of $G_0$ is straightforward.
\end{proof}

\subsection{Central extensions of $G$.}\label{subsectionsext}
Consider the action  of the $k[[h]]$-algebra $A_h$  on the free $k[[h]]$-module  $k[x_1,  \cdots, x_{n}]/(x_1^p, \cdots, x_{n}^p)[[h]]$ given by the formulas 
$$x_i\mapsto \text{multiplication by} \;x_i, \quad y_i \mapsto h \frac{\partial}{\partial x_i}.$$
It is well known ({\it e.g.} see \cite[Lemma 2.2.1]{bmr}) and easy to verify that this action 
defines an isomorphism  of  $k((h))$-algebras 
\begin{equation}\label{matrices}
A_h(h^{-1}) \iso \Mat_{p^n}(k((h)). 
\end{equation} 
For any $k$-algebra $R$, isomorphism  (\ref{matrices}) gives rise to a natural homomorphism
$$G(R) = \Aut_{R[[h]]} (A_h\hat{\otimes} R) \mono \Aut_{R((h))} ( \Mat_{p^n}(R((h))))\iso \PGL(p^n, R((h))).$$
This defines an embedding 
\begin{equation}\label{bkemb}
G \mono L\PGL(p^n),
\end{equation}
where $L\PGL(p^n)$ is the  loop group of $PGL(p^n)$,  that is a sheaf of groups on the category of affine schemes of over $k$ equipped with the {\it fpqc} topology sending $k$-scheme $\Spec R$ to 
$\PGL(p^n, R((h)))$. The natural morphism of algebraic groups $\GL(p^n) \to   \PGL(p^n)$ gives rise to a morphism of the loop groups $ L\GL(p^n) \to   L\PGL(p^n)$. 
By part (i) of Proposition \ref{apploopspgl} in the pullback diagram of {\it fpqc} sheaves 
   $$
\def\normalbaselines{\baselineskip20pt
\lineskip3pt  \lineskiplimit3pt}
\def\mapright#1{\smash{
\mathop{\to}\limits^{#1}}}
\def\mapdown#1{\Big\downarrow\rlap
{$\vcenter{\hbox{$\scriptstyle#1$}}$}}
\begin{matrix}
 \widetilde G: = G \times_{L\PGL(p^n) }  L\GL(p^n) & \to &   L\GL(p^n)  \cr
\mapdown{}  &&  \mapdown{}\cr
 G  & \mono &    L\PGL(p^n) 
\end{matrix}
 $$
the left vertical arrow is surjective. Thus, we have a central extension of {\it fpqc}  sheaves
 \begin{equation}\label{fundcentralextensionbis}
1 \to L\bG_m \to \widetilde G  \to   G \to 1. 
\end{equation}

Recall from \S \ref{subsectionautomorphisms} that subgroup $G^{\geq 1}\subset G$ of automorphisms identical modulo $h$ consists of inner automorphisms. Therefore, extension
 (\ref{fundcentralextensionbis}) fits into a commutative diagram 
   $$
\def\normalbaselines{\baselineskip20pt
\lineskip3pt  \lineskiplimit3pt}
\def\mapright#1{\smash{
\mathop{\to}\limits^{#1}}}
\def\mapdown#1{\Big\downarrow\rlap
{$\vcenter{\hbox{$\scriptstyle#1$}}$}}
\begin{matrix}
 1 &  \to   &  L^+\bG_m   & \to  &  \underline {A}_h ^*  & \to &  G^{\geq 1} & \to & 1\cr
&& \mapdown{}  &  &\mapdown{}  &&  \mapdown{} && \cr
1 &  \to   &  L\bG_m & \to  & \tilde G & \to &  G & \to & 1 \cr
&& \mapdown{}  &  &\mapdown{}  &&  \mapdown{} && \cr
1 &  \to   & \Gra_{\mathbb{G}_m} & \to  & \tilde G_0 & \to &  G_0 & \to & 1,  
\end{matrix}
 $$
where $ \widetilde G_0  = \widetilde G/  \underline {A}_h ^* $ and $\Gra_{\mathbb{G}_m}=  L\bG_m/ L^+\bG_m$ is the affine grassmannian for $\bG_m$.

\begin{rem}\label{fundextliealg}
Consider the central extension of the Lie algebras 
$$0 \to    \Lie \Gra_{\mathbb{G}_m} \to  \Lie  \widetilde G_0  \to \Lie  G_0 \to 0$$
corresponding to the bottom line in the diagram above. Identify the Lie algebra of the affine grassmannian with the vector space $h^{-1} k[h^{-1} ]$ of polynomial vanishing at the origin equipped with the trivial Lie bracket.   
	It is shown in \cite{bk} that the Lie algebra of $G_0$ consists of Hamiltonian vector fields on $A_0$, that is $\Lie  G_0 = A_0/k$, where the Lie bracket is induced by the Poisson bracket on $A_0$. 
	Then $\Lie  \widetilde G_0$ is isomorphic to the direct sum of  Lie algebras $A_0 \oplus h^{-2} k[h^{-1} ]$ with map to  $\Lie  G_0$  given by the projection to the first summand followed by  $A_0 \to A_0/k$.
	
\end{rem}
\begin{proof}
	It suffices to construct 
	a morphism of extensions
	  $$
\def\normalbaselines{\baselineskip20pt
\lineskip3pt  \lineskiplimit3pt}
\def\mapright#1{\smash{
\mathop{\to}\limits^{#1}}}
\def\mapdown#1{\Big\downarrow\rlap
{$\vcenter{\hbox{$\scriptstyle#1$}}$}}
\begin{matrix}
 0 &  \to   &  k  & \to  &  A_0  & \to &  A_0/k & \to & 0 \cr
&& \mapdown{h^{-1}}  &  &\mapdown{}  &&  \mapdown{\simeq} && \cr
0&  \to   & \Lie \Gra_{\mathbb{G}_m} & \to  & \Lie \widetilde G_0 & \to &  \Lie G_0 & \to & 0.  
\end{matrix}
 $$
Define a map of Lie algebras  $A_0 \to \Lie(\widetilde G_0)$ sending $a \in A_0$ to the image of  $1 + \frac{ \epsilon \tilde{a} }{h} \in   \widetilde{G}(k[\epsilon]/\epsilon^2)$
under the homomorphism $  \widetilde{G}(k[\epsilon]/\epsilon^2) \to \widetilde G_0(k[\epsilon]/\epsilon^2)$.  Here  $\tilde{a} \in A_h$ is any lifting of $a$. 
The formula 
$\Ad_{1 + \frac{\epsilon \tilde{a}}{h}} = \Id  +   \frac{\epsilon }{h} \ad_{\tilde a}$ shows that  $A_0 \to \Lie(\widetilde G_0)$  lifts $A_0 / k \to \Lie(G_0)$ as desired. 

\end{proof}

Applying the same construction to the algebra $A_h^{\flat}$ and to its representation on $k[x_1,  \cdots, x_{n}, y_1,  \cdots, y_{n}]/(x_1^p, \cdots, x_{n}^p, y_1^p, \cdots, y_{n}^p)[[h]]$ we construct a commutative diagram 
   $$
\def\normalbaselines{\baselineskip20pt
\lineskip3pt  \lineskiplimit3pt}
\def\mapright#1{\smash{
\mathop{\to}\limits^{#1}}}
\def\mapdown#1{\Big\downarrow\rlap
{$\vcenter{\hbox{$\scriptstyle#1$}}$}}
\begin{matrix}
 1 &  \to   &  L^+\bG_m   & \to  &  \underline{A_h}^{\flat *}  & \to &  G^{\geq 1 \flat} & \to & 1\cr
&& \mapdown{}  &  &\mapdown{}  &&  \mapdown{} && \cr
1 &  \to   &  L\bG_m & \to  & \widetilde G^\flat & \to &  G^\flat & \to & 1 \cr
&& \mapdown{}  &  &\mapdown{}  &&  \mapdown{} && \cr
1 &  \to   &  \Gra_{\mathbb{G}_m} & \to  & \widetilde G_0^\flat & \to &  G_0^\flat & \to & 1. 
\end{matrix}
 $$
Recall from (\ref{action1}) the homomorphism $\psi: G_0 \to G^\flat $. Denote by $\psi_0: G_0 \to G_0 ^\flat $ its composition with  the projection $G^\flat  \to G^\flat _0$.
The key step in the proof of our main theorem is the following result.
\begin{pr}\label{keyprop}
There is a unique homomorphism $\tilde \psi_0$ making the following diagram commutative.
   $$
\def\normalbaselines{\baselineskip20pt
\lineskip3pt  \lineskiplimit3pt}
\def\mapright#1{\smash{
\mathop{\to}\limits^{#1}}}
\def\mapdown#1{\Big\downarrow\rlap
{$\vcenter{\hbox{$\scriptstyle#1$}}$}}
\begin{matrix}
 1 &  \to   &   \Gra_{\mathbb{G}_m} & \to  & \widetilde{G}_0 & \to &  G_0 & \to & 1\cr
&& \mapdown{Id}  &  &\mapdown{\tilde\psi_0}  &&  \mapdown{\psi_0} && \cr
1 &  \to   &   \Gra_{\mathbb{G}_m} & \to  & \widetilde G_0^\flat & \to &  G_0^\flat & \to & 1
\end{matrix}
 $$

\end{pr}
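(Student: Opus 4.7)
The plan is to deduce the proposition from the Basic Lemma announced in \S\ref{planofproof.intro} (and to be proved in \S\ref{Skeyproposition}), which asserts that for the embedding $i: V \mono V^\flat$ the natural map $G_0^\sharp \to G_0$ admits a unique lift to a homomorphism of central extensions
\[
\widetilde{G}_0^\flat \times_{G_0^\flat} G_0^\sharp \to \widetilde{G}_0.
\]
The idea is to realize $\psi_0$ as the composition $G_0 \cong G_0^{\sharp,f} \mono G_0^\sharp \to G_0^\flat$ and then use the Basic Lemma to identify $\widetilde{G}_0$ with the pullback of $\widetilde{G}_0^\flat$ along $G_0^{\sharp,f}$.

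First I will check that $\psi_0$ factors through $G_0^\sharp \subset G_0^\flat$, and in fact through the fiber-preserving subgroup $G_0^{\sharp, f}$. The $4n$-dimensional symplectic space is $V^\flat = V \oplus V^*$, and the map $i$ identifies $V$ with the graph of $\eta \otimes 1$. By construction $\psi = \phi_{g^*\eta - \eta}\circ \psi_{can,g}$: the canonical part $\psi_{can}$ comes from the natural action of $\underline{\Aut}(A_0)$ on the Weyl algebra $\cD_{\Spec A_0, h}$ and so permutes the fibers of $\pi : \alpha^\flat \to \alpha$, while the correction $\phi_\mu$ acts by translations along these fibers. Hence $\psi_0(g) \in G_0^{\sharp, f}(R)$ for any $R$-point $g$ of $G_0$, and the restriction $G_0^{\sharp, f} \to G_0$ is an isomorphism whose inverse is $\psi_0$ (this is precisely the statement recalled in \S\ref{planofproof.intro}).

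Next, invoke the Basic Lemma to obtain the unique lift $\Psi : \widetilde{G}_0^\flat \times_{G_0^\flat} G_0^\sharp \to \widetilde{G}_0$ of central extensions. Pulling back along the closed subgroup $G_0^{\sharp, f} \hookrightarrow G_0^\sharp$ and using the isomorphism $G_0^{\sharp, f} \iso G_0$ induced by $\psi_0$, the restriction of $\Psi$ becomes a morphism of central extensions of $G_0$ by $\Gra_{\mathbb{G}_m}$
\[
\Psi|_{G_0^{\sharp, f}} : \widetilde{G}_0^\flat \times_{G_0^\flat} G_0^{\sharp, f} \to \widetilde{G}_0,
\]
which is identical on the kernel $\Gra_{\mathbb{G}_m}$; hence it is automatically an isomorphism. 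Now define
\[
\tilde\psi_0 : \widetilde{G}_0 \iso \widetilde{G}_0^\flat \times_{G_0^\flat} G_0^{\sharp, f} \longrightarrow \widetilde{G}_0^\flat
\]
as the composition of the inverse of this isomorphism with the canonical projection to $\widetilde{G}_0^\flat$. By construction $\tilde\psi_0$ is a group homomorphism, restricts to the identity on $\Gra_{\mathbb{G}_m}$, and covers $\psi_0$; this fills in the diagram.

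For uniqueness, any other lift $\tilde\psi_0'$ would, by adjunction, produce a second homomorphism $\widetilde{G}_0 \to \widetilde{G}_0^\flat \times_{G_0^\flat} G_0^{\sharp, f}$ of central extensions of $G_0$, and composing with the projection to $\widetilde{G}_0^\flat \times_{G_0^\flat} G_0^\sharp$ and taking the inverse gives a second lift of $G_0^{\sharp, f} \to G_0$ to the central extensions, contradicting the uniqueness clause of the Basic Lemma. The main obstacle of course lies in the Basic Lemma itself, which rests on the Heisenberg-type classification of extensions of $\alpha$ by $\Gra_{\mathbb{G}_m}$ and on the explicit description of $\widetilde{G}_0$ as the automorphisms of the connected torsor $\widetilde{\alpha}$; the present proposition only repackages that content once the fiber-preserving factorization of $\psi_0$ is in place.
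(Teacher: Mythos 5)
Your proof follows the paper's argument almost exactly: for existence both arguments factor $\psi_0$ through $G_0^{\sharp,f}\subset G_0^\sharp$, invoke the Basic Lemma (equivalently Corollary~\ref{refofbasicl}) to identify $\widetilde G_0$ with $\widetilde G_0^\flat\times_{G_0^\flat}G_0^{\sharp,f}$, and let $\tilde\psi_0$ be the resulting composite with the projection to $\widetilde G_0^\flat$. For uniqueness the paper appeals directly to Corollary~\ref{hom} (no nontrivial homomorphisms from an affine group scheme to $\Gra_{\bG_m}$); your reduction to the Basic Lemma's uniqueness clause works but is slightly indirect, since the Basic Lemma is stated over $G_0^\sharp$ rather than $G_0^{\sharp,f}$ and one would in any case be reusing the same Corollary~\ref{hom} argument over the affine subgroup $G_0^{\sharp,f}$.
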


\begin{rem}
	Let us describe  the morphism of the Lie algebras induced by  $\tilde{\psi_0}$.
 Namely, define
	$d(\tilde{\psi_0}) : f \mapsto f + \eta(H_f) + \mathcal{H}_f$. Here $H_f$ stands for the Hamiltonian vector field on $A_0$, while $\mathcal{H}_f$ is the same vector field viewed as a function on $A_0^{\flat}$. 
\end{rem}
 We end this subsection with a reformulation of Proposition \ref{keyprop} that will be used in  forthcoming paper \cite{bktv}.  
 Set 
 $B_h=  A_h \otimes_{k[[h]]} A_h^{\flat, op}$.  Let
 $$G^\sharp \subset \underline{\Aut} (B_h) \to  \underline{\Aut} (B_0)$$
 be the preimage of $\Gamma_{\psi_0}: G_0 \mono  \underline{\Aut} (B_0)= \underline{\Aut} (A_0 \otimes A_0^\flat)$, $\Gamma_{\psi_0}(g)= g\otimes \psi_0(g)$.  Proposition \ref{keyprop}  implies that the canonical extension of $ \underline{\Aut} (B_h)$ by $L\bG_m$ ({\it cf.} (\ref{fundcentralextensionbis})) restricted to $G^\sharp$ admits a unique reduction to $L^+\bG_m$:
 \begin{equation}\label{sharpextension}
 1\to L^+\bG_m \to \widehat G^\sharp  \to  G^\sharp \to 1.
 \end{equation}
 \begin{cor}\label{refforbktv}
 There exists a unique (up to a unique isomorphism) triple $(\widehat G^\sharp, \alpha, i)$ displayed in the digram
 \begin{equation}\label{diag.f.extbdouble} 
%\[
	\begin{tikzcd}
	&  & \underline{B_h(h^{-1})}^*  &  &  \\
	1 \arrow[r, ""] & \underline{B}_h^*  \arrow[r, ""]\arrow[dr, "Ad"] \arrow[ur, ""] & \widehat G^\sharp  \arrow[r, ""]\arrow[d, "\alpha"]\arrow[u, "i"]& G_0\arrow[r, ""] \arrow[d, "\Gamma_{\psi_0}"] &1  \\
	&& \underline{\Aut} (B_h)   \arrow[r, ""]  & \underline{\Aut} (B_0) &
	\end{tikzcd}
	% \]
	\end{equation} 
where the north east arrow  is the natural inclusion, $i$ is a monomorphism  and $\alpha(g)= \Ad_{i(g)}.$ In addition, if $W$ is an irreducible representation of  $B_h(h^{-1})$,  $B_h(h^{-1}) \iso \End_{k((h))}(W)$, there exists  a 
   $k[[h]]$-lattice  $\Lambda \subset W$, invariant under the $B_h$-action on $W$ and under the action of $\widehat G^\sharp$:  
   $$i:  \widehat G^\sharp \mono L^+\GL(\Lambda) \subset  L\GL(W) =\underline{B_h(h^{-1})}^*.$$
\end{cor}
\begin{proof}
The diagram is merely a rearrangement of (\ref{sharpextension}). The existence of a lattice $\Lambda$ stable under  $\widehat G^\sharp$ follows Proposition \ref{subgroupsofLGL}. Since $\underline{B}_h^* \subset \widehat G^\sharp$ the lattice $\Lambda$ is $B_h$-invariant.
\end{proof}
 \subsection{Proposition \ref{keyprop} implies the main theorem.}\label{ss:reductiontopositiveloops} In this subsection we prove Theorem \ref{main} assuming Proposition \ref{keyprop}.

For the first part, let us start by reinterpreting the construction of the algebra $\cO_h \otimes_{\cO_{X'}[[h]]} \cD_{X, [\eta], h}^{op}$. Consider the homomorphism 
\begin{equation}\label{tensorprodloc}
G \to \Aut_{k[[h]]}(A_h) \times \Aut_{k[[h]]}(A_h^{\flat, op}) \mono \Aut_{k[[h]]}(A_h\otimes_{k[[h]]} A_h^{\flat, op}) 
\end{equation}
whose first component is the identity map and whose second component is the composition $G \rar{} G_0 \rar{\psi} \Aut_{k[[h]]}(A_h^{\flat})= \Aut_{k[[h]]}(A_h^{\flat, op})$\footnote{
Note that $\Aut_{k[[h]]}(A_h^{\flat, op})$ is equal, as a subgroup of the group of automorphisms of the $k[[h]]$-module $A_h^{\flat}$, to $\Aut_{k[[h]]}(A_h^{\flat})$.}.
Homomorphism (\ref{tensorprodloc}) defines a sheaf of $\cO_{X'}[[h]]$-algebras $\cM_{X, \cO_h, s} \times _G (A_h\otimes_{k[[h]]} A_h^{\flat, op})$.
By Lemma \ref{lemmalocD_h}, we have an isomorphism
$$\cM_{X, \cO_h, s} \times _G (A_h\otimes_{k[[h]]} A_h^{\flat, op}) \iso \cO_h \otimes_{\cO_{X'}[[h]]} \cD_{X, [\eta], h}^{op}.$$
Next, the $k((h))$-algebra   $(A_h\otimes_{k[[h]]} A_h^{\flat, op})(h^{-1})$ is isomorphic to the matrix algebra $\End_{k((h))}(V)$, for some vector space over $k((h))$ of dimension $p^{3n}$.
\begin{equation}\label{tensorprodlocbis}
(A_h\otimes_{k[[h]]} A_h^{\flat, op})(h^{-1})\iso  \End_{k((h))}(V)
\end{equation}
Isomorphisms (\ref{tensorprodlocbis}) and (\ref{tensorprodloc}) give rise to a homomorphism
\begin{equation}\label{homtopgl}
G \to L\PGL(p^{3n})
 \end{equation}
and, consequently, to an extension of $G$ by $L\bG_m$. Proposition \ref{keyprop} asserts that this extension admits a unique reduction to $L^+\bG_m$.
\begin{equation}\label{tmfe}
 1 \to  L^+\bG_m \to \widehat  G  \to G  \to  1.
 \end{equation}
 Thus, by part (ii) of Proposition \ref{apploopspgl}, it follows that homomorphism (\ref{homtopgl}), possibly after conjugation by an element of  $\PGL(p^{3n}, k((h)))$, factors through  $L^+\PGL(p^{3n})\subset L\PGL(p^{3n})$. 
 In the other words, there exists a $k[[h]]$-lattice $\Lambda \subset V$ such that the action of $G$ on  $(A_h\otimes_{k[[h]]} A_h^{\flat, op})(h^{-1})$ preserves $\End_{k[[h]]}(\Lambda)$:
 \begin{equation}
 A_h\otimes_{k[[h]]} A_h^{\flat, op} \subset (A_h\otimes_{k[[h]]} A_h^{\flat, op})(h^{-1}) \iso \End_{k((h))}(V) \supset \End_{k[[h]]}(\Lambda).
 \end{equation}
 The homomorphism 
 $$G \to \Aut (\End_{k[[h]]}(\Lambda)) \iso  L^+\PGL(p^{3n})$$ 
 and the $G$-torsor $\cM_{X, \cO_h, s}$ give rise to an Azumaya algebra  
 $$\cO^\sharp_h: = \cM_{X, \cO_h, s} \times _G \End_{k[[h]]}(\Lambda)$$
 which by construction coincides with $\cO_h \otimes_{\cO_{X'}[[h]]} \cD_{X, [\eta], h}^{op}$ after inverting $h$. This proves part (i) of the Theorem.
 
 To prove part (ii) of the Theorem, recall from \S \ref{subsectionautomorphisms} that $G$ is acted upon by an involution
 $\tau: G\to G$.  We claim that $\tau$ lifts to extension (\ref{tmfe}),
 $$\hat \tau: \widehat  G \to \widehat  G, \quad \hat \tau ^2 =\Id,$$
 such that the restriction of $\hat \tau$ to $L^+\bG_m$  is given by the formula
 \begin{equation}\label{sigmaact}
 \hat \tau (f(h)) = f(-h)^{-1}, \quad  f(h) \in R[[h]]^*.
 \end{equation}
 Consider the homomorphism $ L^+ \bG_m \to \bG_m$ sending $f(h)\in  L^+ \bG_m(R)= R[[h]]^*$ to $f(0)\in R^*$. This fits into the following diagram of group scheme extensions.
    $$
\def\normalbaselines{\baselineskip20pt
\lineskip3pt  \lineskiplimit3pt}
\def\mapright#1{\smash{
\mathop{\to}\limits^{#1}}}
\def\mapdown#1{\Big\downarrow\rlap
{$\vcenter{\hbox{$\scriptstyle#1$}}$}}
\begin{matrix}
 1 &  \to   &   L^+\bG_m & \to  & \widehat G & \to &  G & \to & 1\cr
&& \mapdown{}  &  &\mapdown{}  &&  \mapdown{} && \cr
1 &  \to   &   \bG_m  & \to  &\widehat{ G/G^{\geq 2}  }       & \to & G/G^{\geq 2}&\to  & 1
\end{matrix}
 $$
The action of $\hat \tau$  on    $\widehat  G$ descends to $\widehat{ G/G^{\geq 2}  } $. 
 For the  $\bZ/2\bZ$-action on  $R^*$ 
 given by formula $c\mapsto c^{-1}$, we have that 
 \begin{equation}\label{cohvanish}
  H^1(\bZ/2\bZ, R^*)\iso R^*/R^{* 2}.
 \end{equation} 
 In particular, every cohomology class gets killed after a finite \'etale extension of $R$.
 It follows that the sequence of $\bZ/2\bZ$-invariants 
 \begin{equation}\label{tmfeinv}
 1 \to  \bG_m ^{\hat \tau=1}  \to (\widehat{ G/G^{\geq 2}  })^{\hat \tau=1}   \to (G/G^{\geq 2}  )^{\tau=1}  \to  1
 \end{equation}
 is exact.  Note that $ \bG_m^{\hat \tau=1} = \mu_2 = \{1, -1\}$.  We claim that (\ref{tmfeinv}) is a split extension:
 \begin{equation}\label{tmfeinvsplit}
 (\widehat{ G/G^{\geq 2}  })^{\hat \tau=1}   \iso (G/G^{\geq 2}  )^{\tau=1} \times \mu_2.  
  \end{equation}
  Indeed, the determinant homomorphism
  \begin{equation}\label{eqdeterminant}
  \widehat G \mono L^+ GL(p^{3n})\rar{\det} L^+\bG_m
  \end{equation}
  composed with the map  $L^+G_m \to \bG_m$ factors through $\widehat{ G/G^{\geq 2}  }$ and commutes with $\tau$. Hence, it defines a homomorphism 
  \begin{equation}\label{eqdeterminantred}
(\widehat{ G/G^{\geq 2}  })^{\hat \tau=1}  \to \mu_2
\end{equation}
  whose restriction to $\mu_2$ is the identity. This gives a splitting of extension (\ref{tmfeinv}). We derive from (\ref{tmfeinvsplit}) that the extension  $\widehat{ G/G^{\geq 2}  }$ has the form
   \begin{equation}\label{finalspl}
1  \to     \bG_m   \to G_0 \ltimes \underline{A}_0^* \to   G_0 \ltimes \underline{A}_0^*/\bG_m \to 1.
\end{equation}

Now we can prove that 
 \begin{equation}\label{formularestr}
 i^*[\cO^\sharp_h]= i^*(\delta(\gamma)).
 \end{equation}
 To see this, consider the gerbe  $\cS$ of splittings of the Azymaya  algebra $\cO^\sharp_h$. 
 By definition, this is a sheaf of groupoids on $(X')_{fl}$ whose sections $\cS(Z)$ over $Z\to X'$ is the groupoid of splittings of the pullback of 
$\cO^\sharp_h$  to $Z$. This is a gerbe naturally banded by the sheaf $L^+\bG_m$ meaning that the automorphism group of any object of  $\cS(Z)$
is canonically identified with $L^+\bG_m(Z)$. By construction of $\cO^\sharp_h$ and the uniqueness statement in Proposition \ref{keyprop} this gerbe is equivalent to the gerbe of liftings of $G$-torsor  $\cM_{X, \cO_h, s}$ to a $\widehat G$-torsor.
It follows that the gerbe of splittings $\overline \cS$ of the Azymaya  algebra $i^* \cO^\sharp_h$  is  equivalent to the $\bG_m$- gerbe of liftings of $G/G^{\geq 2}  $-torsor  $\cL:= \cM_{X, \cO_h, s}\times _G  G/G^{\geq 2}   $
 to a $ \widehat{G/G^{\geq 2} }$-torsor. The set of isomorphism classes of torsors over   $G/G^{\geq 2}\iso G_0 \ltimes \underline{A}_0^*/\bG_m$ lifting a given $G_0$-torsor $\cM_{X, [\eta]}$ is in bijection $\rho$ with the
 set $H^1_{et}(X', \cO_{X'}^*/\cO_{X'}^{p *}) $ of isomorphism classes of torsors over the group scheme  $\underline{A}_0^*/\bG_m  \times _{G_0}  \cM_{X, [\eta]}$. It follows from 
 (\ref{finalspl}) that given a  $G/G^{\geq 2}  $-torsor  $\cL$ the  $\bG_m$-gerbe of liftings of $\cL$ to a torsor over $ \widehat{G/G^{\geq 2} }$ is equivalent to the gerbe of liftings of
 $\underline{A}_0^*/\bG_m  \times _{G_0}  \cM_{X, [\eta]}$-torsor $\rho(\cL)$ to a torsor over $\underline{A}_0^*  \times _{G_0}  \cM_{X, [\eta]}$. 
 This proves  (\ref{formularestr}).  
 
To prove the last assertion of Theorem \ref{main} observe that
the kernel of the restriction $i^*\colon  \Br(X'[[h]]) \to \Br(X')$ is a subgroup of the group
$H^2_{Zar}(X', \bW(\cO_{X'}))$, 
where  $\bW(\cO_{X'})$ is the additive group of the ring of big Witt vectors, that is
$$\bW(\cO_{X'})= (1 + h\cO_{X'}[[h]])^*.$$
We claim that  vanishing of $H^2(X', \cO_{X'})$ implies  vanishing of $H^2(X', \bW(\cO_{X'}))$.
Indeed,  $\bW(\cO_{X'})$ is the inverse limit of the groups of truncated Witt vectors 
$$\bW(\cO_{X'}) \cong \lim_{\leftarrow} \bW_m(\cO_{X'}).$$
 Using the exact sequence 
$$ 0 \to \bW_l(\cO_{X'}) \rar{V^m}   \bW_{l+m} (\cO_{X'}) \to \bW_m(\cO_{X'}) \to 0$$ 
it follows that, for every positive integer $m$,   the group $H^2(X, \bW_m(\cO_{X'}))$ is trivial and consequently the restriction homomorphism 
 $$H^1(X, \bW_{l+m} (\cO_{X'})) \to H^1(X, \bW_m(\cO_{X'}) )$$
 is surjective, for every $l$ and $m$. 
 Hence by Proposition 13.3.1 from  \cite[Chapter 0]{ega3}, we have
 $$H^2(X', \bW(\cO_{X'}))   \iso \lim_{\leftarrow} H^2(X', \bW_m(\cO_{X'}))  =0$$
 as desired.
\begin{rem}
Observe that under the assumptions  of Theorem \ref{main}, we have that
$$ p^{3n}([\cO^\sharp_h] -  \delta(\gamma))=0.$$
Indeed,  $\cO^\sharp_h$ is an Azumaya algebra of rank $p^{6n}$ and hence its class in the Brauer group is killed by $p^{3n}$. On the other hand,  the class $\delta(\gamma))$ is killed by $p$.
\end{rem} 
 \begin{rem}
 The proof of Theorem \ref{main} shows that 
 vanishing of $H^2(X', \cO_{X'})$ implies surjectivity of the map $\rho$  (see formula (\ref{bktheorem})), which does not follow directly from the Bezrukavnikov-Kaledin theorem. 
 Indeed, from (\ref{tmfe}) we derive an extension 
 $$ 1\to \widehat{ G^{\geq 2}  } \to \widehat{ G}/\bG_m \to  G_0 \ltimes  \underline{A}_0^*/\bG_m \to 1.$$
 Consider the $G_0 \ltimes  \underline{A}_0^*/\bG_m$-torsor $\cM$ corresponding to a restricted structure $[\eta]$ and a class $\gamma  \in  H^1_{et}(X',  \cO_{X'}^*/\cO_{X'}^{* p} )$. Using that  $H^2(X', \cO_{X'})=0$ we infer that
 $\cM$ can be lifted to a $\widehat{ G}/\bG_m$-torsor $\tilde \cM$. Pushing forward the latter under the homomorphism $\widehat{ G}/\bG_m \to G$ we get a quantization with  $\rho$-invariant $\gamma$.
\end{rem}
 \begin{rem}\label{errorinbk}
 In  \cite[Proposition 1.24]{bk} the authors erroneously assert the subgroup $G\mono L\PGL(p^n)$  from (\ref{bkemb}) preserves a lattice, that is,  possibly after conjugation by an element of  $\PGL(p^{n}, k((h)))$, factors through  $L^+\PGL(p^{n})\subset L\PGL(p^{n})$.  This claim led the authors to a mistake in the statement of Proposition 1.24.
 In fact,  even the subgroup of translations $\Spec A_0=  \alpha_p^{2n} \subset G $ does not admit an invariant lattice. This follows from the fact the commutator map
 $$\Lie \alpha_p^{2n}  \otimes \Lie \alpha_p^{2n} \to \Lie L\bG_m= k((h))$$
 arising from extension (\ref{fundcentralextensionbis}) is given by the formula $\frac{1}{h}\sum _i dy_i \wedge dx_i$, {\it i.e.} does not factor through  $\Lie L^+\bG_m = k[[h]]$.

\end{rem}
 %For the  $\bZ/2\bZ$-action on  $R[[h]]^*$ 
 %given by formula (\ref{sigmaact}), we have that 
 %\begin{equation}\label{cohvanish}
 %H^1(\bZ/2\bZ, R[[h]]^*) \iso H^1(\bZ/2\bZ, R^*)= R^*/R^{* 2}.
 %\end{equation} 
 %It follows that the sequence of $\bZ/2\bZ$-invariants 
 %\begin{equation}\label{tmfeinv}
 %1 \to  (L^+\bG_m)^{\hat \tau=1}  \to \hat  G^{\hat \tau=1}   \to G^{\tau=1}  \to  1
 %\end{equation}
 %is exact. 
 
\section{Central extensions of the group of Poisson automorphisms}\label{Skeyproposition}
In this section we prove Basic Lemma \ref{lifting} and derive from it Proposition \ref{keyprop}. For the duration of this section we fix a symplectic vector space $(V, \omega_V)$ of dimension $2n$ and denote by $A_h$ the corresponding restricted Weyl algebra, $G$ its group of automorphisms and $G_0$ the quotient of $G$ by the subgroup of automorphisms identical modulo $h$ viewed as a group scheme of automorphisms of $A_0$ preserving the class $[\eta_V]$.

\subsection{Properties of $G_0$.}\label{propg0}
Recall from \cite[Proposition 3.4]{bk} that the reduced subgroup $G_0^0=(G_0)_{red} \subset G_0$ is equal to the stabilizer of the point $\Spec k \mono \Spec A_0$: for every $k$-algebra $R$,
  $G_0^0(R)$ is the subgroup of $G_0(R)$ that consists of $R$-linear automorphisms of $A_0 \otimes R$ that preserve the kernel of the homomorphism $A_0 \otimes R \to R$ induced by $A_0 \to k$.
 According to \cite[Lemma 3.3]{bk} the Lie algebra of $G_0$ (resp. $G_0^0$)  is the algebra of all Hamiltonian vector fields\footnote{Recall that a vector field is said to be Hamiltonian if it has the form 
 $H_f$, for some $f\in A_0$.} on $\Spec A_0$ (resp. the algebra of all Hamiltonian vector fields  vanishing at $\Spec k \mono \Spec A_0$). In particular, we have
 \begin{equation}\label{dimG_0}
 \dim G_0 = \dim G^0_0 = \dim _k \Lie G^0_0 = \dim _k m^2 =  p^{2n} - 2n -1,
\end{equation}
 where $m$ is the maximal ideal in $A_0$.  
 
 Denote by $\alpha_p$ the Frobenius kernel on $\bG_a$. The finite group scheme $\alpha:= \alpha_p^{2n}$ acts on $\Spec A_0 = \alpha $ by translations  inducing the inclusion $\alpha \mono G_0$.
 Observe that the product morphism
  $$ \alpha \times G_0^0 \to G_0$$ 
  induces an isomorphism of the underlying schemes. 

\begin{lm}\label{Gisconnected} 
The group schemes $G_0$ and $G_0^0$  are connected. Moreover, there is a surjective homomorphism
$$ G_0^0 \epi \Sp(2n)$$
whose kernel is a unipotent algebraic group.
\end{lm}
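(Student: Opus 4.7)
The plan is to construct the homomorphism $G_0^0 \to \Sp(2n)$ via the linear action of $G_0^0$ on the cotangent space at the unique $k$-point $\Spec k \mono \Spec A_0$. The maximal ideal $m \subset A_0$ satisfies $m/m^2 \cong V^*$ canonically, and the Poisson bracket on $A_0$ descends to a map $m/m^2 \otimes m/m^2 \to A_0/m = k$ which coincides with $\omega_V^{-1}\in \Lambda^2 V$. Since elements of $G_0^0$ preserve the Poisson bracket, the induced action on $V^*$ lies in $\Sp(V) = \Sp(2n)$.

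For surjectivity, I would produce a splitting $\Sp(V) \to G_0^0$. Any $g\in \Sp(V)$ acts linearly on $V^*$ and extends to an algebra automorphism of $S^{\cdot} V^*$; it descends to an automorphism $\widetilde g$ of $A_0$ because the ideal $J_V$ generated by $\{f^p \mid f\in V^*\}$ is stable under linear transformations of $V^*$ (using $(f_1+f_2)^p = f_1^p + f_2^p$ in characteristic $p$). By construction $\widetilde g$ preserves $m$ and the Poisson bracket. To see that it preserves the restricted Poisson structure $[\eta_V]$, one computes $d(\widetilde{g}^*\eta_V - \eta_V) = g^*\omega_V - \omega_V = 0$; the closed $1$-form $\widetilde{g}^*\eta_V - \eta_V$ has coefficients of polynomial degree one, so it is exact by the polynomial Poincar\'e lemma, which is valid here since $p > 2$.

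The kernel $U$ of $G_0^0 \to \Sp(2n)$ consists of automorphisms acting trivially on $m/m^2$. Filter $U$ by the closed subgroup schemes $U_k$ of automorphisms acting trivially on $m/m^{k+1}$; then $U_1 = U$ and $U_N = \{e\}$ for $N$ large (since $m^N = 0$). Each quotient $U_k/U_{k+1}$ embeds as a linear (hence additive) subgroup of the finite-dimensional vector group $\Hom_k(V^*, m^{k+1}/m^{k+2})$, because the composition of two elements of $U_k$ differs from their sum by terms of order $\ge k+2$. Consequently $U$ is a successive extension of vector group schemes, hence a connected unipotent algebraic group.

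Finally, the short exact sequence $1\to U\to G_0^0 \to \Sp(2n) \to 1$ splits via the previous paragraph, and both $U$ and $\Sp(2n)$ are connected, so $G_0^0$ is connected. For $G_0$, I will use the isomorphism of underlying schemes $\alpha \times G_0^0 \iso G_0$ recalled just before the lemma; since $\alpha = \alpha_p^{2n}$ is an infinitesimal group scheme (topologically a point, so connected) and $G_0^0$ is connected, $G_0$ is connected as well. The main technical obstacle I anticipate is verifying the exactness of $\widetilde{g}^*\eta_V - \eta_V$ for $g \in \Sp(V)$, since this is what ensures the would-be splitting actually lands in $G_0^0$ rather than merely in the automorphism group of the Poisson algebra $A_0$.
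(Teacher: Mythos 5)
Your construction of the homomorphism $G_0^0\to\Sp(2n)$ via the action on $m/m^2$ and the linear splitting $\Sp(V)\to G_0^0$ is correct, and your use of the Euler vector field to show $\widetilde g^*\eta_V-\eta_V$ is exact (the closed form is homogeneous of weight $2$, invertible since $p>2$) properly addresses the point you flagged as the main obstacle. This is the same construction the paper uses. Unipotency of the kernel $U$ also follows from your argument, since a closed subgroup scheme of a vector group in characteristic $p$ is unipotent.

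However, there is a genuine gap in the connectedness claim. You establish that each $U_k/U_{k+1}$ \emph{embeds} into the vector group $\Hom_k(V^*, m^{k+1}/m^{k+2})$, and then conclude ``consequently $U$ is a successive extension of vector group schemes, hence connected.'' This does not follow: in characteristic $p$ a closed subgroup scheme of $\bG_a^m$ need not be a vector subgroup and need not be connected (for instance $\bZ/p\subset\bG_a$, or $\alpha_p\subset\bG_a$). Your justification that ``composition differs from the sum by higher-order terms'' shows only that the image is an additive subgroup of the ambient $k$-vector space, not that it is a $k$-linear subspace, so the quotients are not yet identified with vector groups. Since your argument for connectedness of $G_0^0$ (and then of $G_0$) rests entirely on $U$ being connected, that part of the lemma is not established.

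The paper closes exactly this gap by a dimension count. Instead of mapping $F^iG_0^0/F^{i+1}G_0^0$ into the large space $\Hom_k(V^*,m^{i+1}/m^{i+2})$, it uses the Hamiltonian structure to produce an injection $\alpha_i\colon F^iG_0^0/F^{i+1}G_0^0\hookrightarrow\underline{m^{i+2}/m^{i+3}}$ (send $\phi$ to the Hamiltonian function of the homogeneous degree-$i$ derivation $\phi^*-\Id$). Combined with $\dim G_0^0=\dim_k m^2=p^{2n}-2n-1$ (which comes from $\Lie G_0^0\cong m^2$ and smoothness of the reduced group $G_0^0$) and $\dim\Sp(2n)=\dim_k m^2/m^3$, the inequalities $\dim F^iG_0^0/F^{i+1}G_0^0\le\dim_k m^{i+2}/m^{i+3}$ must all be equalities, so each $\alpha_i$ is an isomorphism onto the full (irreducible) vector group, and connectedness follows. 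Your coarser target $\Hom_k(V^*,m^{k+1}/m^{k+2})$ has total dimension far exceeding $\dim_k m^2$, so the analogous count cannot close with it. To repair your proof you should either switch to the paper's Hamiltonian embedding and run the dimension count, or otherwise prove directly that the image of $U_k/U_{k+1}$ is a $k$-linear subspace.
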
{}

\begin{proof}

It suffices to prove the assertions for the reduced group $G_0^0$. 
To show that $G_0^0$ is connected we consider the filtration 
$$ \ldots \subset F^2G_0^0 \subset F^1G_0^0 \subset  G_0^0$$
by normal group subschemes of $G_0^0$ and prove that all the associated quotients are connected. Namely,  for any $k$-algebra $R$, we set
  $$F^iG_0^0(R) = \{ \phi \in G_0^0(R) \, | \, \phi = \Id \mod m^{i+1} \otimes R\}.$$ 
  It is easy to see that this functor is representable by a normal group 
subscheme of $G_0^0$. 

The action of  $G_0^0$ on the tangent space $(m/m^2)^*$ preserves $\omega$. Thus it gives rise to a monomorphism
$$G_0^0/F^1G_0^0 \to \Sp(2n)$$
which is, in fact, an isomorphism because it has a section. In particular, we have that
\begin{equation}\label{dimsp}
\dim G_0^0/F^1G_0^0 = \dim \Sp(2n)= \dim_k m^2/m^3  
\end{equation}
To check that the other quotients are connected we construct injective homomorphisms 
\begin{equation}\label{pfconn}
\alpha_i : F^iG_0^0 / F^{i+1}G_0^0 \hookrightarrow \underline{m^{i+2}/m^{i+3}}, \quad i\geq 1,
\end{equation}
where $ \underline{m^{i+2}/m^{i+3}}$  is the vector group associated to the space $m^{i+2}/m^{i+3}$ and then using (\ref{dimG_0})  conclude that $\alpha_i$ are isomorphisms. For the sake of brevity we  only define
$\alpha_i$ on $k$-points.   
Take  $\phi \in F^iG_0^0(k)$ and consider $\phi^* : A_0 \rightarrow A_0$. By definition $\phi^* = \Id \mod m^{i+1}$, so
$\phi^* - \Id$ maps $m^r$ to $m^{i+r}$, for every $r\geq 0$. Hence  $\phi^* - \Id$ defines  a homogeneous degree $i$ map $\theta_\phi : \oplus m^r/m^{r+1} = A \rightarrow A$ which is, in fact, a derivation. 
Let us show that $\theta_\phi$ lies in a Lie algebra of $G_0^0$, i.e. $L_{\theta_\phi}\eta$ is exact. Indeed, since $\phi \in G_0^0$ we have $\phi^*\eta = \eta + dK$ for some $K\in A_0$. But then $L_{\theta_\phi}\eta = dK_{i+2}$, where $K_{i+2}$ is the homogeneous component of $K$ of degree $i+2$. It follows that $\theta_\phi$ is Hamiltonian:
$\iota _{\theta_\phi}  \omega = \iota _{\theta_\phi}   d \eta = d(K_{i+2} - \iota _{\theta_\phi} \eta)$. 
Set  
$$\alpha_i(\phi)= K_{i+2} - \iota _{\theta_\phi} \eta  \in m^{i+2}/m^{i+3}.$$
Using the identity $\theta_{\phi \circ \psi } = \theta_\phi +\theta_\psi $,  for  every $\phi, \psi  \in F^iG_0^0(k)$, one checks that $\alpha $ is a group homomorphism and that it factors through   $F^iG_0^0 / F^{i+1}G_0^0$.
For the injectivity of (\ref{pfconn}) observe that  $\theta_\phi =0$ if and only if $\phi \in F^{i+1}G_0^0(k)$. 
 
From (\ref{pfconn}) we have that, for every $i\geq 1$,
 $$\dim F^iG_0^0 / F^{i+1}G_0^0 \leq \dim _k m^{i+2}/m^{i+3}.$$
If for some $i$ the inequality is strict then using (\ref{dimsp}) we would have that  $\dim G_0^0  < \dim _k m^{2}$ contradicting to (\ref{dimG_0}). It follows that all $\alpha_i$ are isomorphisms as desired.
\end{proof}{}
Recall from Lemma \ref{decomp}\footnote{We remark that all the results of the Appendix, in particular, Lemma \ref{decomp}, do not depend on anything  from the main body of the paper.} a decomposition 
$$ \Gra_{\bG_m} \iso  \underline{\bZ} \times \hat \bW. $$
\begin{cor}\label{redtoW}
The extension (\ref{ext}) admits a unique reduction to $\hat \bW \subset \Gra_{\bG_m}$. Notation:
$$1\to \hat \bW \to \nG_0 \to G_0\to 1.$$
 \end{cor}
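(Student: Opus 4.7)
The plan is to translate the existence of a reduction to $\hat\bW$ into the splitting of an extension of $G_0$ by the discrete group $\underline{\bZ}$. Using the decomposition $\Gra_{\bG_m}\cong\underline{\bZ}\times\hat\bW$ of Lemma \ref{decomp}, pushing out (\ref{ext}) along the projection $\Gra_{\bG_m}\epi\underline{\bZ}$ yields a central extension
\begin{equation*}
1\to\underline{\bZ}\to E\to G_0\to 1,
\end{equation*}
and reductions of $\widetilde G_0$ to $\hat\bW$ correspond bijectively to group-scheme sections $G_0\to E$ of this extension. The corollary thus reduces to showing that such a section exists and is unique.

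For uniqueness, two sections would differ by a group-scheme homomorphism $G_0\to\underline{\bZ}$. Since $\underline{\bZ}$ is an étale constant sheaf and $G_0$ is connected by Lemma \ref{Gisconnected}, the underlying morphism of schemes factors through the set of connected components, hence is constant; being a homomorphism, it must equal $0$. For existence, I will single out a canonical splitting via the connected component of the identity. The structure morphism $E\to G_0$ is a $\underline{\bZ}$-torsor, hence étale; let $E^{\circ}\subset E$ denote the connected component containing the identity. The multiplication and inversion maps preserve the identity and have connected source, so $E^{\circ}$ inherits a group-subscheme structure. Its intersection with the central $\underline{\bZ}$ is contained in the connected component of $0\in\underline{\bZ}$, hence is trivial, so $E^{\circ}\to G_0$ is a monomorphism.

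To finish I must show that $E^{\circ}\to G_0$ is surjective. For this I plan to first trivialise the $\underline{\bZ}$-torsor $E\to G_0$. Since $\underline{\bZ}$ is étale, torsors on $G_0$ are the same as torsors on the underlying reduced scheme $(G_0)_{\mathrm{red}}=G_0^0$; by Lemma \ref{Gisconnected} and the finiteness of the filtration used in its proof, $G_0^0$ is a connected algebraic group of finite type over $k$, whose étale fundamental group is therefore profinite, so $\Hom(\pi_1^{\mathrm{et}}(G_0^0),\bZ)=0$ and $H^1_{\mathrm{et}}(G_0,\underline{\bZ})=0$. After choosing a trivialisation $E\cong G_0\times\underline{\bZ}$, the component $E^\circ$ is identified with $G_0\times\{0\}$, and its inverse under the projection provides the splitting. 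I expect the only substantial step to be the vanishing $H^1_{\mathrm{et}}(G_0,\underline{\bZ})=0$; everything else is formal manipulation of the central extension and the definition of $E^{\circ}$.
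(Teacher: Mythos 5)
Your route differs from the paper's: the paper constructs the reduction explicitly as the kernel of the homomorphism $\widetilde G_0\to\underline{\bZ}$ obtained by composing the representation $\widetilde G\hookrightarrow L\GL(p^n)$ with $\det$ and the valuation $L\bG_m\twoheadrightarrow\underline{\bZ}$, and then uses connectedness of $G_0$ (so that $\Hom(G_0,\underline{\bZ}/p^n)=0$) to get surjectivity onto $G_0$. You instead work intrinsically with the $\underline{\bZ}$-pushout $E$ and appeal to $H^1_{et}(G_0,\underline{\bZ})=0$. Your method is more general, since it does not use the ambient $L\GL(p^n)$ at all, but as written it has a gap.

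The problematic step is the assertion that $E^\circ\cap\underline{\bZ}$ lies in the connected component of $0\in\underline{\bZ}$ and is therefore trivial, whence $E^\circ\to G_0$ is a monomorphism. Connectedness of $E^\circ$ does not give this: the intersection of a connected subspace with a totally disconnected closed subscheme need not be a single point, and for a nontrivial $\underline{\bZ}$-torsor over a connected base (for instance the infinite chain of $\bP^1$'s covering the nodal cubic) the total space is connected while the fiber over every point is all of $\bZ$. What makes your claim true here is precisely the triviality of the torsor $E\to G_0$, but you only establish that afterwards, via $H^1_{et}(G_0,\underline{\bZ})=0$, in the surjectivity discussion; the monomorphism step is using the conclusion before it is proved. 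The repair is simply to reorder: prove $H^1_{et}(G_0,\underline{\bZ})=0$ first, fix a trivialization $E\cong G_0\times\underline{\bZ}$ so that $E^\circ=G_0\times\{0\}$, and then your earlier observation that $E^\circ$ is a group subscheme shows at once that $E^\circ\to G_0$ is an isomorphism of group schemes, giving injectivity and surjectivity together with no separate monomorphism argument needed.
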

\begin{proof}
By Lemma \ref{Gisconnected} we have that $Hom(G_0, \underline{\bZ})=0$. The uniqueness part follows. For the existence,  note that the composition
 $$\widetilde G\mono L\GL(p^n) \rar{\det} L\bG_m \epi  \underline{\bZ} $$
 factors through $\widetilde G_0$. We claim that  setting $\nG_0:= \ker (\tilde G_0 \to \underline{\bZ})  $ does the job. Indeed, the only assertion that requires a proof is the surjectivity of the projection
 $\nG _0 \to G_0$. By construction, $\nG _0$ projects onto the kernel of the homomorphism $G_0 \to  \underline{\bZ}/p^n$ induced by $\widetilde G_0 \to \underline{\bZ}$. But by Lemma \ref{Gisconnected}
 every such homomorphism is trivial.
  \end{proof}

Consider $m^2\subset A_0$ as a Lie subalgebra of $A_0$ equipped with Poisson bracket. Recall that 
\begin{equation}\label{liealgebraofthereducedsubp}
m^2 \rar{f \mapsto H_f} \Lie  G_0^0
\end{equation}
is an isomorphism of Lie algebras. The grading on $A_0$ induces a grading  on the Lie algebra $m^2$:
 $$m^2 \cong \bigoplus _{2 \leq i \leq 2n(p-1)} m^i/m^{i+1}$$  
such that the Lie bracket has degree $-2$.

\begin{lm}\label{commutator}
Isomorphism (\ref{liealgebraofthereducedsubp}) induces 
$$[\Lie G_0^0, \Lie G_0^0]  \cong \bigoplus _{2 \leq i < 2n(p-1)} m^i/m^{i+1}. $$ 
In particular, $[\Lie G_0^0, \Lie G_0^0]$ has codimension $1$ in $\Lie G_0^0$.
Moreover, $\lsp(2n) = m^2/m^3$ together with any non-zero element $z\in m^3/m^4$ generate the Lie algebra  $[\Lie G_0^0, \Lie G_0^0]$.
\end{lm}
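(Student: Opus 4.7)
The isomorphism (\ref{liealgebraofthereducedsubp}) transports the Lie bracket on $\Lie G_0^0$ to the Poisson bracket on $m^2$, which is homogeneous of degree $-2$ with respect to the grading $A_0 = \bigoplus_{i} m^i/m^{i+1}$. In particular $\{m^2, m^2\}$ is a graded subspace of $m^2$, so the lemma reduces to determining which graded pieces it hits.

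For the inclusion $\{m^2, m^2\} \subseteq \bigoplus_{2 \leq i < 2n(p-1)} m^i/m^{i+1}$, I would introduce the trace $\operatorname{Tr} \colon A_0 \to k$ extracting the coefficient of the top monomial $(x_1 y_1 \cdots x_n y_n)^{p-1}$. The key observation is that $\operatorname{Tr}(\partial_{x_i} h) = \operatorname{Tr}(\partial_{y_i} h) = 0$ for every $i$ and every $h \in A_0$, since producing the top monomial via a partial derivative would require an exponent $\geq p$, forbidden by the relations $x_i^p = y_i^p = 0$. Integration by parts in the Poisson bracket formula then yields $\operatorname{Tr}\{f,g\} = 0$ for all $f, g \in A_0$, so $\{m^2, m^2\}$ misses the top graded piece $m^{2n(p-1)}$.

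For the matching lower bound, I would exhibit every monomial $x^a y^b$ of degree $k$ with $2 \leq k < 2n(p-1)$ explicitly as a bracket in $\{m^2, m^2\}$. If some index $i$ satisfies $a_i < p-1$ and $b_i \geq 1$ (or the symmetric condition with $x$ and $y$ interchanged), the direct computation
\[
\{y_i^2/2, \, x_i^{a_i+1} y_i^{b_i-1} \prod_{j \neq i} x_j^{a_j} y_j^{b_j}\} = (a_i+1)\, x^a y^b,
\]
with $a_i + 1 \not\equiv 0 \pmod p$, places $x^a y^b \in \{m^2, m^2\}$. The only monomials not covered by these two cases are of the form $h_S = \prod_{i \in S}(x_i y_i)^{p-1}$ for some $S \subseteq \{1, \dots, n\}$; the case $S = \{1, \dots, n\}$ is the top, and for $\emptyset \neq S \subsetneq \{1, \dots, n\}$, picking $j \notin S$ and $k \in S$ gives
\[
\{y_j y_k, \, x_j x_k^{p-1} y_k^{p-2} h_{S \setminus \{k\}}\} = h_S - x_j y_j\, x_k^{p-2} y_k^{p-2}\, h_{S \setminus \{k\}},
\]
and the error term falls under the first case (at index $j$, where $a_j = b_j = 1$), so $h_S \in \{m^2, m^2\}$.

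For the generation claim, let $L$ denote the graded Lie subalgebra of $[\Lie G_0^0, \Lie G_0^0]$ generated by $\lsp(2n) = m^2/m^3$ and a nonzero $z \in m^3/m^4$. Then $L_2 = \lsp(2n)$, and since the truncated symmetric power $m^3/m^4$ is generated as an $\lsp(2n)$-module by any nonzero vector, $L_3 = \lsp(2n) \cdot z = m^3/m^4$. Inductively, for $3 \leq k < 2n(p-1)-1$, one shows $L_{k+1} = m^{k+1}/m^{k+2}$ by computing explicit brackets $\{u, v\}$ with $u \in m^3/m^4$ and $v \in m^k/m^{k+1}$ chosen in the spirit of the formulas above, and invoking $\Sp(2n)$-equivariance together with the second step to ensure the resulting submodules of $m^{k+1}/m^{k+2}$ exhaust the graded piece. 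I expect the main obstacle to be verifying this final inductive step uniformly in small characteristic (especially $p=3$), where natural bracket calculations can degenerate and the structure of $\Sp(2n)$-submodules of truncated symmetric powers is subtle; here one has to supplement $\Sp$-equivariance with hands-on Poisson bracket computations analogous to Case~(i) above.
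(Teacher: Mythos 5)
Your argument splits naturally into two halves, and they have different statuses.

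The proof that $\{m^2, m^2\}=\bigoplus_{2\le i<2n(p-1)} m^i/m^{i+1}$ (hence the codimension-one claim) is correct and takes a genuinely different route than the paper. Your trace/integration-by-parts argument for the vanishing in top degree is a tidy repackaging of the paper's observation that the bracket of two monomials of total degree $2n(p-1)+2$ vanishes, and is arguably cleaner. For the reverse inclusion you exhibit every monomial as a bracket directly via degree-\emph{preserving} formulas (fixing one factor of degree $2$ in $\lsp(2n)$), together with the $h_S$ correction trick; the case analysis does cover everything (the uncovered pairs $(a_i,b_i)$ are exactly $(0,0)$ and $(p-1,p-1)$, as you say). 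The paper instead obtains the reverse inclusion as a byproduct of proving the generation claim, so it never exhibits $\{m^2,m^2\}\supseteq\bigoplus$ independently; your route is more direct for this sub-claim.

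The generation claim, however, has a genuine gap that your proposal names but does not fill. Your base step $L_3=m^3/m^4$ is fine by Lemma~\ref{irrrepsp} ($3<2(p-1)$ for $p\ge3$). But the inductive step is where all the work lies, and "compute brackets and invoke $\Sp(2n)$-equivariance plus the second step" does not close it: for $n\ge 2$ the graded pieces $m^{l}/m^{l+1}$ with $2(p-1)\le l < 2n(p-1)$ fall \emph{outside} the range where Lemma~\ref{irrrepsp} guarantees irreducibility, so hitting one nonzero vector and applying equivariance is not enough. Moreover, your degree-preserving formulas from the first half do not help here: they write a degree-$k$ monomial as a bracket with \emph{another} degree-$k$ element, which is not a priori in $\fg$, so they cannot drive an induction on degree. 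What is needed — and what the paper supplies — is a family of degree-\emph{lowering} bracket identities, pairing a degree-$3$ generator (such as $y_i^3$ or $x_i^2 y_i$) with a degree-$(d-1)$ element already known to lie in $\fg$: e.g.\ $3a_i x_i^{a_i}y_i^{b_i}=\{x_i^{a_i+1}y_i^{b_i-2},y_i^3\}$, $(a-1-2b)x_i^a y_i^b = \{x_i^{a-1}y_i^b, x_i^2 y_i\}$, and special formulas for the residual monomials (products of $(x_iy_i)^{p-1}$ and isolated $x_j$'s). Coefficients in these identities can vanish mod $p$, forcing a careful case split, and $p=3$ does in fact require separate treatment. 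These computations are the core content of the lemma's third sentence, and they are absent from the proposal.
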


\begin{proof}
%One can verify that, for every $i\geq 0$,  the adjoint action of the Lie subalgebra $m^2/m^3$ on $m^i/m^{i+1}$ is irreducible. \red{Do we need to include a proof?} 
By a direct computation the Poisson  bracket of any two monomials of total degree  $2n(p-1) +2$ is $0$ {\it i.e.},  $[\Lie G_0^0, \Lie G_0^0]$  does not contain non-zero homogeneous elements 
of degree  $2n(p-1)$.
 Hence $[\Lie G_0^0, \Lie G_0^0] \subset  \bigoplus _{2 \leq i < 2n(p-1)} m^i/m^{i+1}$.
Since  $[\lsp(2n), \lsp(2n)] = \lsp(2n)$, we have that   $m^2/m^3 \subset  [\Lie G_0^0, \Lie G_0^0]$. Also it is clear that $[\Lie G_0^0, \Lie G_0^0]$ 
contains at least one non-zero element of degree $3$ ({\it e.g.}, $\{x_1^2, x_1 y_1^2\}= 2 x_1^2  y_1$). 
 To complete 
the proof of the Lemma it suffices to verify that the Lie subalgebra $\fg$   generated by  $m^2/m^3$ and a non-zero element of degree $3$ coincides with $ \bigoplus _{2 \leq i < 2n(p-1)}  m^i/m^{i+1}$. 
We check by induction on $d$ that $m^d /m^{d+1}\subset \fg$ provided that $2 \leq d < 2n(p-1)$.
The base of induction, $d = 3$, can be easily checked directly  follows from Lemma \ref{irrrepsp}.
%We leave it for the reader to check the assertion for $d = 3$ .

Choose a symplectic basis $(x_i, y_j)$ for $V^*$ and let $E = x_1^{a_1}y_1^{b_1}\ldots x_n^{a_n}y_n^{b_n} \in m^d / m^{d+1}$ with $d > 3$. 

Note that 
\begin{itemize}
	
	\item $3a_ix_1^{a_i}y_i^{b_i} =\{x_i^{a_i+1}y_1^{b_i-2}, y_i^3\}$ 
	(and $-3b_ix_1^{a_i}y_1^{b_i} =\{x_i^{a_i-2}y_1^{b_i+1}, y_1^3\}$)
	
	\item $(a-1-2b)x_i^{a}y_i^b = \{ x_i^{a-1}y_i^b, x_i^2y_i\}$ (and $(2a - b + 1)x_i^{a}y_i^b = \{ x_i^{a}y_i^{b-1}, x_iy_i^2\}$)
	
	\item $- x_ix_j = \{x_iy_i, x_ix_j\}$
	
	\item $x_i^{p-1}y_i^{p-1} + 2(p-1)x_i^{p-2}y_i^{p-2}x_jy_j= \{ x_i^{p-1}y_i^{p-3}x_j, y_i^2y_j\}$
	
	\item $2(p-1)x_i^{p-2}y_i^{p-2}x_j^2y_j + 2x_i^{p-1}y_i^{p-1}x_j=\{x_i^{p-1}y_i^{p-3}x_j^2, y_i^2y_j\}$
\end{itemize}

Assume first that $p > 3$. Then if for some $i$ we have $a_i < p-1$ and $b_i \geq 2$ (or $a_i \geq 2$ and $b_i < p-1$) then by the first formula above $E$ is generated by elements of degree $2 \leq d^{\prime} < d$, which are in $\fg$ by the induction assumption. Otherwise for all $i$ the pair $(a_i, b_i)$ equals $(p-1, p-1)$, $(1, 1)$, $(1, 0)$ or $(0,1)$. 

If for some $i$ the pair is $(1, 1)$ we get that from the second formula that $E \in \fg$. If there are at least two pairs of the type $(1, 0)$ or $(0,1)$ we are done by the third formula.

Otherwise we may assume that $E = x_1^{p-1}y_1^{p-1} \ldots x_{n-r}^{p-1}y_{n-r}^{p-1}$ or $E = x_1^{p-1}y_1^{p-1} \ldots x_{n-r}^{p-1}y_{n-r}^{p-1}x_{n - r + 1}$ for some $0 <r < n$. In these cases we are done by the last two formulas.

Now assume $p = 3$. 
For $d>3$ note that if for some $i$ the number $a_i + b_i - 1$ is not divisible by 3 we are done using the second formula. So assume that for all $i$ the pair $(a_i, b_i)$ equals either $(p-1, p-1)$, $(1, 1)$, $(1, 0)$ or $(0, 1)$. In these cases we proceed as above.
\end{proof}
%\begin{rem}
%The proof above shows that the representation of $Sp(2n)$ on $m^i/m^{i+1}$ is irreducible.
%	\end{rem}

\begin{lm}\label{G_0}
We have a commutative diagram
	\[
\begin{tikzcd}
G_0^0 / [G_0^0, G_0^0]  \arrow[r, "\cong"] \arrow[d, "\cong"]&  \mathbb{G}_a &   \\
G_0 / [G_0, G_0]  \arrow[ur, ""]&  &
\end{tikzcd}
\]
Moreover, the projection  $G_0 \to G_0 / [G_0, G_0] $ admits a section yielding to a decomposition $G_0 \cong [G_0, G_0] \rtimes \mathbb{G}_a$. Lastly, we have that
\begin{equation}\label{lieofcomm}
\Lie [G_0^0, G_0^0]= [\Lie G_0^0, \Lie G_0^0].
\end{equation}

\end{lm}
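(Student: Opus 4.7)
My plan is to produce an explicit surjective group homomorphism $\chi : G_0 \to \bG_a$, identify its kernel with $[G_0, G_0]$, and exhibit a section; the same $\chi$ will simultaneously handle the $G_0^0$-row of the diagram.

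\emph{Construction of $\chi$ and its differential.} For $g \in G_0(R)$, since $g$ preserves the class $[\eta_V]$, the one-form $g^*\eta - \eta \in \Omega^1_{A_0 \otimes R/R}$ is exact; I write it as $dF_g$, with $F_g$ uniquely normalized by $F_g(0) = 0$. Let $\omega_0 := x_1^{p-1}y_1^{p-1}\cdots x_n^{p-1}y_n^{p-1}$ be the top monomial, and define $\chi(g) \in R$ to be the coefficient of $\omega_0$ in $F_g$. The cocycle identity $F_{gh} = h^*F_g + F_h - F_g(h(0))$ shows that $\chi$ is a group homomorphism, provided the ``top coefficient'' linear functional on $A_0$ is $G_0$-invariant; this holds on $G_0^0$ because $\Sp(2n)$ has no nontrivial characters and acts through one on the one-dimensional top graded piece $m^{2n(p-1)}/m^{2n(p-1)+1}$, and holds on translations $t_v$ because $t_v^*f - f$ has strictly lower total degree than $f$. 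Under the identification $\Lie G_0 \cong A_0/k$ via Hamiltonian vector fields, the differential $d\chi$ is, up to a sign, the canonical Frobenius trace $f \mapsto (\text{coefficient of } \omega_0)$, which annihilates all Poisson brackets by integration by parts.

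\emph{Identifying kernel and abelianizations.} By trace-vanishing, $[\Lie G_0, \Lie G_0] \subseteq \ker d\chi$; conversely, combining Lemma \ref{commutator} with the inclusion $V^* \subseteq \{V^*, m^2\}$ (for instance $y_i = \{y_i, x_i y_i\}$ up to sign) shows that $[\Lie G_0, \Lie G_0]$ has codimension one in $\Lie G_0$, whence $\ker d\chi = [\Lie G_0, \Lie G_0]$, and similarly $\ker d\chi|_{\Lie G_0^0} = [\Lie G_0^0, \Lie G_0^0]$. Using the general inclusion $\Lie [H, H] \supseteq [\Lie H, \Lie H]$, we obtain $\dim H/[H, H] \leq 1$ for $H \in \{G_0, G_0^0\}$; combined with the surjective $\chi$, this forces both abelianizations to be $\bG_a$ via $\chi$, giving the commutative diagram. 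For the splitting, take the one-parameter subgroup generated by $H_{\omega_0} \in \Lie G_0^0$: since $H_{\omega_0}$ is a graded derivation of positive degree $2n(p-1) - 2$ and $A_0$ is supported in degrees at most $2n(p-1)$, its $p$-fold composition vanishes, so $H_{\omega_0}^{[p]} = 0$ in the restricted Lie algebra and the generated subgroup is a copy of $\bG_a$; since $d\chi$ is nonzero on this line, $\chi$ restricts to an isomorphism, providing a section and the decomposition $G_0 \cong [G_0, G_0] \rtimes \bG_a$.

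\emph{Lie algebra equality and main obstacle.} For equation (\ref{lieofcomm}), the surjectivity of $d\chi|_{\Lie G_0^0}$ makes $\chi|_{G_0^0}$ smooth, so $\ker \chi|_{G_0^0}$ is a smooth closed subgroup of codimension one with Lie algebra $[\Lie G_0^0, \Lie G_0^0]$; the sandwich $[\Lie G_0^0, \Lie G_0^0] \subseteq \Lie [G_0^0, G_0^0] \subseteq \Lie \ker \chi|_{G_0^0} = [\Lie G_0^0, \Lie G_0^0]$ then forces equality. The main technical point I expect to navigate is the scheme-theoretic (as opposed to merely reduced) identification of the abelianization quotient $G_0/[G_0, G_0]$ with $\bG_a$, which boils down to verifying the smoothness of the commutator subgroup; this is handled via standard structure theory for commutator subgroups of smooth connected algebraic groups in characteristic $p$, combined with the perfectness of $[\Lie G_0^0, \Lie G_0^0]$ that follows from a variant of the argument of Lemma \ref{commutator} applied to the graded subalgebra $\bigoplus_{2 \leq i < 2n(p-1)} m^i/m^{i+1}$.
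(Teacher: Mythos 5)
Your $\chi$ is essentially the paper's $\phi$ (the paper normalizes via the de~Rham class $[F_g\cdot\omega^n]$ and the inverse Cartier operator applied to $\omega^n$, but since $\omega^n=n!\,dx_1\wedge dy_1\wedge\cdots$ this reduces, when it makes sense, to your ``coefficient of $\omega_0$'' description), and your one-parameter subgroup generated by $H_{\omega_0}$ is, up to the scalar $-\tfrac12$, the paper's section $s(t)\colon f\mapsto f-\tfrac{t}{2}\{f,u\}$. So the route is the same one the paper takes. The difficulty is your assertion that ``the differential $d\chi$ is, up to a sign, the canonical Frobenius trace'': this is where the real content is, and the claim is not quite right. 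For $\xi=H_f$ one computes $L_{H_f}\eta = d\bigl(\sum_j y_j\partial_{y_j}f-f\bigr)$, so $d\chi(H_f)$ is the $\omega_0$-coefficient of $\sum_j y_j\partial_{y_j}f-f$, which equals $\bigl(n(p-1)-1\bigr)$ times the $\omega_0$-coefficient of $f$. Thus $d\chi$ is $\bigl(n(p-1)-1\bigr)\equiv -(n+1)\ (\mathrm{mod}\ p)$ times the $\omega_0$-coefficient functional, \emph{not} $\pm$ it, and $d\chi(H_{\omega_0})=-(n+1)$ vanishes whenever $p\mid n+1$ (e.g.\ $p=3$, $n=2$, where one checks directly that the paper's $s(t)$ actually fixes $\eta$ on the nose). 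In that range both your surjectivity-of-$\chi$ argument and the sandwich argument for (\ref{lieofcomm}) collapse. The paper itself writes ``one verifies directly that $s$ is a group homomorphism and a section'' without exhibiting the verification, so this appears to be a gap shared with the source; at minimum you should record that the argument as given needs $p\nmid n+1$, or supply a different section.

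A second, independent gap: to get the commutative diagram and the scheme-theoretic identification $G_0/[G_0,G_0]\cong\bG_a$ you must show $\alpha=\alpha_p^{2n}\subset[G_0,G_0]$ as closed subgroup schemes. Note $\chi|_\alpha=0$, since for a translation $t_v$ the primitive $F_{t_v}$ has degree $1\neq 2n(p-1)$; so $\chi$ alone cannot see whether $\alpha$ dies in the abelianization, and the dimension bound $\dim G_0/[G_0,G_0]\le 1$ is compatible with a nonreduced kernel. Your closing paragraph appeals to ``structure theory for commutator subgroups of smooth connected algebraic groups,'' but $G_0$ is not smooth (it has $\alpha$ as an infinitesimal factor), so that theory does not apply to $[G_0,G_0]$. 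The paper handles this by a different argument: the subgroup $P=\alpha\rtimes\Sp(2n)\subset G_0$ is perfect, because $[P,P]$ surjects onto $[\Sp(2n),\Sp(2n)]=\Sp(2n)$ with kernel a nonzero $\Sp(2n)$-invariant subgroup of $\alpha$, which by irreducibility of $\Lie\alpha$ must be all of $\alpha$; hence $\alpha\subset[G_0,G_0]$. This step is absent from your proposal and should be added.
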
{}

\begin{proof}
Let us construct a group scheme homomorphism $\phi$ from $G_0$ to $\mathbb{G}_a$. 
For a $k$-algebra $R$ and  $g \in G_0(R)$, we have that  $g(\eta) = \eta + df \in \Omega^1_{A_0\otimes R/R}$, for some $f \in \coker (R \to A_0 \otimes R)$. Consider the element
  $$\phi(g)= [f \cdot \omega ^n] \in H_{DR}^{2n}(A_0 \otimes R/R)\iso R,$$
  where the isomorphism above is induced by $k \iso H_{DR}^{2n}(A_0)$ that takes $1\in k$ to the inverse Cartier operator applied to $\omega^n$.

To show that $\phi$ is a homomorphism consider two elements $g_1, g_2 \in G_0(R)$. Write $g_1(\eta) = \eta + df_1, g_2(\eta) = \eta + df_2$. Since $g_2 \circ g_1(\eta) = \eta + df_2 + d(g_2(f_1))$ the image of $g_2 \circ g_1$ equals $[(f_2 + g_2(f_1)) \cdot (\omega)^n]$. On the other hand, $\phi(g_1) + \phi(g_2) = [(f_2 + f_1) \cdot (\omega)^n]$.
Thus it suffices to prove that $G_0$ acts trivially on $H_{DR}^{2n}(A_0)$. We claim that, in fact, every $1$-dimensional representation of $G_0$ is trivial. Indeed,  $G_0$ is generated by two subgroups 
 $\alpha =\alpha_p^{2n}$  and  $G_0^0$. Since $\alpha $ has no nontrivial homomorphisms to $\mathbb{G}_m$ it suffices to prove the assertion for $G_0^0$. By Lemma \ref{Gisconnected} $G_0^0$ is an extension of 
 $\Sp(2n)$ by a unipotent group and neither of the two groups has nontrivial $1$-dimensional representations. This proves that $\phi$ is homomorphism. 
 
 The restriction of $\phi$ to  $G_0^0$ yields a homomorphism
 \begin{equation}\label{comcomp}
 G_0^0 / [G_0^0, G_0^0] \to \mathbb{G}_a.
 \end{equation}
Next, we shall construct a homomorphism $s: \mathbb{G}_a   = \Spec k[t] \to  G_0^0$ whose composition with the projection $G_0^0  \to G_0^0 / [G_0^0, G_0^0]$ followed by (\ref{comcomp}) is $\Id$.
Set $u = \prod x_i^{p-1}   \prod y_i^{p-1} \in A_0$. Define $s(t) \in Aut_{k[t]} (A_0[t]) $ sending $f \in A_0$ to $f- \frac{t}{2} \{f,u\}$. One verifies directly that $s$ is group homomorphism and a section of   (\ref{comcomp}).
Let us check that  (\ref{comcomp}) is an isomorphism. First, from lemma \ref{commutator} we know that $[\Lie G_0^0, \Lie G_0^0]$ has codimension  $1$ in  $\Lie G_0^0$. Secondly, since $G^0_0=(G_0)_{red}$ is smooth, both groups $[G_0^0, G_0^0]$ and $G_0^0 / [G_0^0, G_0^0]$ are also smooth.
Moreover, we have that  $[\Lie G_0^0, \Lie G_0^0] \subset  \Lie [G_0^0, G_0^0]$ (see {\it e.g.} \cite[Prop. 3.17]{b} ). It follows  that the dimension of $G_0^0 / [G_0^0, G_0^0]$ is at most $1$. Thus   (\ref{comcomp})
is a homomorphism from a smooth connected algebraic group of dimension $\leq 1$ to $\mathbb{G}_a$ and as we have already seen this homomorphism admits a section. It follows that (\ref{comcomp}) is an isomorphism.
This also proves formula (\ref{lieofcomm}).

To complete the proof of Lemma it suffices to check that  the homomorphism $G_0^0 / [G_0^0, G_0^0] \to  G_0 / [G_0, G_0]  $ is surjective. Since $G_0$ is generated by $\alpha$ and $G^0_0$ it is enough to show that
$\alpha \in   [G_0, G_0] $. Consider the subgroup  $P= \alpha \rtimes \Sp(2n) \subset G_0$. We claim that  $[P,P]=P$. Indeed, there is a surjection $[P, P] \epi [\Sp(2n), \Sp(2n) ] = \Sp(2n) $. The kernel of the surjection is a subgroup of $\alpha$ whose Lie algebra is a $\Sp(2n)$-invariant subspace of $\Lie \alpha $. It follows that the kernel is either trivial which clearly not the case or equal to $\alpha$ as desired.   
 
\end{proof}{}

Next, we shall show that $[G_0, G_0] $ is generated by $\Sp(2n)$, $\alpha$, and a certain one-parameter subgroup $\bG_a \subset    G_0^0$. We start with the following observation.
\begin{lm}\label{1-subgroups} 
Let $f\in A_h$ be an element of the restricted Weyl algebra such that $f^{\frac{p+1}{2}}=0$.  Consider 
the homomorphism 
$$\tilde \lambda_f: \bG_a = \Spec k[\tau]   \to \underline{A_h(h^{-1})}^*$$  
given by the formula
$$e^{\frac{\tau f}{h}}= \sum_{i=0}^{ \frac{p-1}{2}} \frac{(\tau f)^i}{h^i i!}.$$
Then the $p$-th power of the operator $\ad_{\frac{\tau f}{h}} : A_h \hat{\otimes} k[\tau] \to A_h \hat{\otimes} k[\tau]$ is zero and
\begin{equation}\label{advAd}
\Ad_{e^{  \frac{\tau f}{h} } } = \sum_{i=0}^{ p-1} \frac{\ad_{\tau f}^i}{h^i i!} =:e^{\ad_{\frac{\tau f}{h}}}.
\end{equation}
 In particular, $e^{\frac{\tau f}{h}}$ normalizes the lattice  $\underline{A_h} \subset \underline{A_h(h^{-1})}$ and, thus, defines a homomorphism
 \begin{equation}\label{emG_aintildeG}
 \tilde \lambda_f: \bG_a   \to \tilde G.  
 \end{equation}
	\end{lm}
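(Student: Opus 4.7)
The plan is to verify the four claims of the lemma in sequence. First I would observe that the stronger hypothesis $f^{(p+1)/2}=0$ implies in particular $f^p = f^{(p+1)/2}\cdot f^{(p-1)/2} = 0$. In any associative $k$-algebra of characteristic $p$ one has the restricted Lie identity $(\ad_a)^p = \ad_{a^p}$; applied to $a = \tau f/h \in A_h(h^{-1})\hat\otimes k[\tau]$ this gives $(\ad_{\tau f/h})^p = \ad_{\tau^p f^p/h^p} = 0$, so the truncated exponential $e^{\ad_{\tau f/h}} = \sum_{i=0}^{p-1}\ad_{\tau f}^i/(h^i i!)$ is a well-defined $k[\tau]$-linear endomorphism of $A_h\hat\otimes k[\tau]$.

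Next I would prove the identity (\ref{advAd}). Writing $x=\tau f/h$, the truncated exponential $e^x := \sum_{i=0}^{p-1} x^i/i!$ coincides with the series written in the statement, since $x^i = 0$ for $i\geq (p+1)/2$. I would first note, via the standard binomial manipulation $\sum_{i+j=k,\,i,j\le N}\tau^i\sigma^j/(i!j!) = (\tau+\sigma)^k/k!$ (valid for all $k\leq N = (p-1)/2$, which is the only range where $f^k\neq 0$), that $e^{\tau f/h}\cdot e^{\sigma f/h}=e^{(\tau+\sigma)f/h}$; in particular $e^{\tau f/h}$ is a unit with inverse $e^{-\tau f/h}$, and the assignment $\tau \mapsto e^{\tau f/h}$ is multiplicative. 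To establish the Hadamard-type identity itself, I would plug the noncommutative binomial $x^i y = \sum_{l=0}^i \binom{i}{l}\ad_x^l(y)\,x^{i-l}$ (proven by induction on $i$) into $e^x\,y\,e^{-x} = \sum_{i,j=0}^{p-1}\frac{(-1)^j}{i!j!}\,x^i y\,x^j$, regroup in powers of $\ad_x^l(y)$, and then apply the classical binomial identity $\sum_{b+j=c}(-1)^j/(b!j!) = \delta_{c,0}$ (valid for $0\le c\le p-1$). The hypothesis $x^{(p+1)/2}=0$ is what ensures that the only surviving monomials lie in the range where this last identity applies in its unrestricted form.

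The third step is to observe that the lattice $\underline{A_h}$ is preserved. Since $A_h/hA_h = A_0$ is commutative, one has $[A_h,A_h]\subset h A_h$, whence by induction $\ad_f^i(A_h)\subset h^i A_h$, so each summand $\ad_f^i(y)/(h^i i!)$ on the right-hand side of (\ref{advAd}) belongs to $A_h\hat\otimes k[\tau]$. Combined with the identity just proven, this shows conjugation by $e^{\tau f/h}$ maps $A_h\hat\otimes k[\tau]$ into itself, and thus defines an $R[[h]]$-algebra automorphism of $A_h\hat\otimes R$ for $R=k[\tau]$, i.e.\ an element of $G(k[\tau])$. The pair consisting of this automorphism together with the unit $e^{\tau f/h}\in \underline{A_h(h^{-1})}^*(k[\tau])\subset L\GL(p^n)(k[\tau])$, whose images in $L\PGL(p^n)$ agree by construction, gives an element of $\widetilde G(k[\tau])$; the multiplicativity in $\tau$ established above promotes this to the desired homomorphism $\tilde\lambda_f\colon\bG_a\to\widetilde G$.

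The main obstacle I anticipate is the identity $\Ad_{e^x} = e^{\ad_x}$ in characteristic $p$: although it is the standard Hadamard lemma in characteristic zero, in characteristic $p$ one cannot simply invoke the ODE argument, and the combinatorial proof via the noncommutative binomial is delicate because the ranges of the summation indices and the truncation of the exponentials interact nontrivially. Keeping careful track of which terms are annihilated by the hypothesis $f^{(p+1)/2}=0$ (rather than merely $f^p=0$), so that all occurrences of the identity $(1-1)^c/c! = 0$ are applied only in the unrestricted range $0\le c\le p-1$, is the technical heart of the argument.
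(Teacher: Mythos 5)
Your proof is correct, and it takes a genuinely different route from the paper's. The paper observes that both sides of \eqref{advAd} are homomorphisms $\bG_a \to L\underline{\Aut}$, checks that their differentials at $\tau=0$ agree (hence they agree on the height-one subgroup $\alpha_p\subset\bG_a$), and then notes that both are polynomial of degree $<p$ in $\tau$, so they must coincide. You instead verify the Hadamard identity directly by noncommutative-binomial combinatorics, and you also supply the multiplicativity $e^{\tau f/h}e^{\sigma f/h}=e^{(\tau+\sigma)f/h}$ explicitly, whereas the paper simply asserts that both sides are homomorphisms. Your route is more elementary and self-contained; the paper's is shorter but leans on the formal-group fact that a map $\alpha_p\to H$ is determined by its differential, plus the reader recognizing a priori that both sides are group homomorphisms.

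One place where your sketch is a bit terse is the sentence ``the hypothesis $x^{(p+1)/2}=0$ is what ensures that the only surviving monomials lie in the range where this last identity applies in its unrestricted form.'' To make this airtight one should spell out that after setting $b=i-l$ the inner sum at level $l$ has $b$ constrained by $b\le p-1-l$, so the binomial identity is \emph{not} a priori unrestricted when $c>p-1-l$; what saves the argument is the independent fact that $\ad_x^l(y)\,x^c=0$ once $l+c\ge p$. Indeed $\ad_x^l(y)\,x^c=\sum_m\binom{l}{m}(-1)^{l-m}x^m y\,x^{l-m+c}$, and a nonzero term requires $m\le (p-1)/2$ and $l-m+c\le (p-1)/2$, hence $l+c\le p-1$. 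With this observation in hand the restricted range is exactly the range where the monomial survives, and the unrestricted identity $\sum_{b+j=c}(-1)^j/(b!\,j!)=\delta_{c,0}$ does apply; so your argument goes through. An alternative way to organize the same point is to prove the Hadamard identity in $\bZ_{(p)}\langle x,y\rangle/(x^{(p+1)/2})$ by base change from $\bQ$ (where it is the usual formal identity) and then reduce modulo $p$; the two hypotheses $x^{(p+1)/2}=0$ and $p$ odd are precisely what guarantee all denominators are $p$-adic units.
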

\begin{proof}
The only assertion that requires a proof is formula (\ref{advAd}). Both sides of the equation can be thought as homomorphisms from $\bG_a$ to the  loop group of $k((h))$-linear automorphisms of
$ A_h(h^{-1})$. One readily sees that the differentials of these homomorphisms at $\tau =0$ are equal. It follows that the homomorphisms are equal on the subscheme $\alpha_p \subset \bG_a$. Also, by the assumption on $f$,
both homomorphisms  are given by matrices in $\End_{k[\tau]((h))}( A_h \hat{\otimes} k[\tau] (h^{-1}))$ whose entries are polynomials in $\tau$ of degree less than $p$. Therefore, the homomorphisms are equal on $\bG_a$.
\end{proof}
Let $(x_i, y_j)$ be a symplectic basis for $V^*$. 
For $p>3$, define a homomorphism 
\begin{equation}\label{emG_a}
\lambda: \bG_a =\Spec k[\tau]  \mono G^0_0
\end{equation}
by the following equations
$$\lambda(\tau, x_i) = x_i, \quad \text{for\; all}\; i$$
$$\lambda (\tau, y_1) = y_1 +3 \tau x_1^2, \quad  \lambda(\tau, y_i) = y_i, \; \text{for\; all}\; i\ne 1.$$
The differential of $ \lambda $ is the Hamiltonian vector field $H_{-x_1^3}$. The construction from Lemma \ref{1-subgroups}  gives a lifting $ \tilde \lambda_{x_1^3}= e^{\frac{\tau x_1^3}{h}}: \bG_a   \to \widetilde G$ of $\lambda$.  

For $p = 3$ define $\lambda$ by

$$\lambda(\tau, x_i) = x_i, \quad \lambda(\tau, y_i) = y_i \quad \text{for\; all}\; i\ne 1$$
$$\lambda (\tau, x_1) = x_1 + \tau x_1^2, \quad  \lambda (\tau, y_1) = y_1 -2 \tau x_1 y_1 + 2\tau^2x_1^2y_1.$$
The differential of $\lambda$ in this case is $H_{-x_1^2y_1}$. The homomorphism $ \tilde \lambda_{x_1^3}= e^{\frac{\tau x_1^2y_1}{h}}: \bG_a   \to \tilde G$ lifts $\lambda$.
 
\begin{lm}\label{H} The group scheme  $H: = [G_0, G_0]$ is generated by $\Sp(2n)$, $\alpha$, and the image of $\lambda$.
	\end{lm}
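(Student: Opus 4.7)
Write $H^{\sharp}$ for the subgroup scheme of $G_0$ generated by $\Sp(2n)$, $\alpha$, and the image of $\lambda$. The plan is to show $H^{\sharp}\subset H$ and $H\subset H^{\sharp}$ separately.

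For the inclusion $H^{\sharp}\subset H$, I check each generator. First, $\Sp(2n)=[\Sp(2n),\Sp(2n)]\subset [G_0,G_0]=H$. Second, the containment $\alpha\subset H$ was established at the end of the proof of Lemma \ref{G_0}. Third, $\lambda(\bG_a)$ is a smooth connected subgroup of $G_0^0$ whose Lie algebra is spanned by $H_{-x_1^3}$ (respectively $H_{-x_1^2 y_1}$ if $p=3$), an element of $m^3/m^4$. By Lemma \ref{commutator} this element lies in $[\Lie G_0^0,\Lie G_0^0]$, which equals $\Lie[G_0^0,G_0^0]$ by (\ref{lieofcomm}). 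Since $[G_0^0,G_0^0]$ is smooth and connected (as the derived subgroup of the smooth connected group $G_0^0$ from Lemma \ref{Gisconnected}), any smooth connected subgroup with Lie algebra inside $\Lie[G_0^0,G_0^0]$ is contained in $[G_0^0,G_0^0]\subset H$.

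For the reverse inclusion I first establish $[G_0^0,G_0^0]\subset H^{\sharp}$. The subgroup scheme of $G_0^0$ generated by $\Sp(2n)$ and $\lambda(\bG_a)$ is smooth and connected, and its Lie algebra contains $\lsp(2n)=m^2/m^3$ together with a non-zero element of $m^3/m^4$. By the ``moreover'' part of Lemma \ref{commutator}, these generate $[\Lie G_0^0,\Lie G_0^0]=\Lie[G_0^0,G_0^0]$. Smoothness and connectedness then force this subgroup to equal $[G_0^0,G_0^0]$, so $[G_0^0,G_0^0]\subset H^{\sharp}$.

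It remains to check $H\subset \alpha\cdot[G_0^0,G_0^0]$. Recall from \S\ref{propg0} that the multiplication map $\alpha\times G_0^0\to G_0$ is an isomorphism of schemes, and from Lemma \ref{G_0} that $H=\ker\phi$ where $\phi:G_0\to\bG_a$ is the character with $\phi|_{G_0^0}$ descending to the isomorphism $G_0^0/[G_0^0,G_0^0]\iso\bG_a$. Since $\alpha\subset H$, the restriction $\phi|_{\alpha}$ vanishes, so for any $R$-point $(a,g)\in\alpha(R)\times G_0^0(R)$ one has $\phi(ag)=\phi(a)+\phi(g)=\phi(g)$, and therefore $ag\in H$ if and only if $g\in[G_0^0,G_0^0]$. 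Under the scheme isomorphism $\alpha\times G_0^0\iso G_0$ the subscheme $H$ pulls back precisely to $\alpha\times[G_0^0,G_0^0]$, so $H$ equals the image of $\alpha\times[G_0^0,G_0^0]$ under multiplication, and this image lies in $H^{\sharp}$.

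The only slightly delicate point is the passage from Lie algebra generation to group scheme generation in the second paragraph, which relies on the smoothness of $[G_0^0,G_0^0]$; everything else is a direct assembly of Lemmas \ref{Gisconnected}, \ref{commutator}, and \ref{G_0}.
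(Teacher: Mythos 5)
Your proof is correct and takes essentially the same approach as the paper. The paper also reduces to showing (a) $H=\alpha\cdot[G_0^0,G_0^0]$ via the scheme decomposition $\alpha\times G_0^0\iso G_0$ together with the identification $G_0^0/[G_0^0,G_0^0]\cong G_0/H$ from Lemma~\ref{G_0}, and (b) that $\Sp(2n)$ and $\lambda(\bG_a)$ generate $[G_0^0,G_0^0]$ by passing to Lie algebras and invoking smoothness; you have merely spelled out the easy inclusion $H^{\sharp}\subset H$, which the paper leaves implicit.
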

\begin{proof}
First, we show that  $\alpha $ and $[G_0^0, G_0^0]$ generate $H$.
Indeed, since $\alpha \subset H $, we have that, for any $k$-algebra $R$ 
$$H(R)=\alpha(R)(G_0^0(R) \cap H(R)).$$
Thus	it suffices to prove that $G_0^0(R) \cap H(R) = [G_0^0, G_0^0](R)$. 
	By Lemma  \ref{G_0} $G_0^0 / [G_0^0, G_0^0] \cong G_0 / H$, so the assertion holds.
	
Thus it remains to prove that 	$[G_0^0, G_0^0]$ is generated by $\Sp(2n)$ and the image of $\lambda$. Since the groups in question are smooth it suffices to verify that $\Lie [G_0^0, G_0^0]$ 
is generated by $\lsp(2n)$ and  $\Lie \lambda(\bG_a)$. But this is immediate from Lemmas   \ref{commutator} and  \ref{G_0}.
\end{proof}

Consider the extension
\begin{equation}\label{ext2}
1\to \hat \bW \to \nG_0 \to G_0\to 1
\end{equation}{}
from Lemma \ref{redtoW}.
\begin{lm}\label{sectionlemma}
The restriction of the extension~\eqref{ext2} to $G_0^0$ admits a unique splitting, that is there exists a unique homomorphism $G_0^0 \to \nG_0$ whose composition with the projection  $\nG_0 \to G_0$ is the identity.
\end{lm}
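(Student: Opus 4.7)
The plan is to establish uniqueness and existence of the splitting separately, with uniqueness being a soft argument about reducedness and existence requiring an explicit construction of the lift.

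\emph{Uniqueness.} Any two splittings of the restriction of \eqref{ext2} to $G_0^0$ differ by a morphism of group schemes $\phi \colon G_0^0 \to \hat{\bW}$, and I will show any such $\phi$ is trivial. The key observation is that $G_0^0 = (G_0)_{\mathrm{red}}$ is reduced by definition, whereas $\hat\bW(R) = \{1\}$ for every reduced $k$-algebra $R$. Indeed, an element of $\hat\bW(R) \subset R[h^{-1}]^*$ has the form $g = 1 + \sum_{i=1}^n a_i h^{-i}$ with $a_i \in R$, and invertibility of $g$ in the polynomial ring $R[h^{-1}]$ forces $\sum_{i \geq 1} a_i h^{-i}$ to be nilpotent there. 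Comparing the coefficient of $h^{-nN}$ in its $N$-th power shows $a_n$ is nilpotent in $R$, and a descending induction on the index then shows every $a_i$ is nilpotent, hence vanishes when $R$ is reduced. Applied to the coordinate rings of the affine opens of $G_0^0$, this gives $\phi = 1$, so at most one splitting can exist.

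\emph{Existence.} The plan is to build a lift $s \colon G_0^0 \to \widetilde{G}$ and then descend via $\widetilde{G} \to \widetilde{G}_0$; the resulting map lands in $\nG_0 = \ker(\widetilde{G}_0 \to \underline{\bZ})$ automatically because $G_0^0$ is connected by Lemma~\ref{Gisconnected}. The construction uses the decomposition $G_0^0 \cong \Sp(2n) \ltimes U$ with $U = F^1 G_0^0$ a connected unipotent subgroup filtered by $F^{\bullet} U$ with successive quotients isomorphic to vector groups $m^{i+2}/m^{i+3}$ (from the proof of Lemma~\ref{Gisconnected}), lifted piecewise. On $\Sp(2n)$, the linear action on $V^*$ preserves the Heisenberg relations of $A_h$ and so lifts to a homomorphism $\Sp(2n) \to G$; the induced restricted metaplectic action on the Fock module $P = k[x_1,\dots,x_n]/(x_i^p)[[h]]$ then produces $\Sp(2n) \to L^+\GL(P)$, giving via the pullback definition of $\widetilde{G}$ the desired lift on the semisimple factor. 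For each one-parameter subgroup $\lambda_f \subset U$ with $f \in m^{i+2}$ nilpotent, the canonical formula $\widetilde\lambda_f = \exp(\tau f/h)$ from Lemma~\ref{1-subgroups} supplies a lift $\bG_a \to \widetilde{G}$.

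The main obstacle is coherence: these piecewise lifts must assemble into a single group scheme morphism out of $G_0^0$. My strategy is to argue inductively along the filtration $F^{\bullet} G_0^0$; at each stage, extending a section from $G_0^0 / F^{i+1}$ to $G_0^0 / F^{i+2}$ requires splitting an extension of the reduced vector group $m^{i+2}/m^{i+3}$ by a subgroup of $\hat\bW$. Since $\hat\bW$ is commutative with a filtration by pro-unipotent pieces and the source is a reduced vector group, the relevant splitting exists, and the uniqueness established above forces all the step-by-step extensions to be canonically compatible, producing a well-defined global section $G_0^0 \to \widetilde{G}$, whose composition with $\widetilde{G} \to \widetilde{G}_0$ lands in $\nG_0$ and is the desired splitting.
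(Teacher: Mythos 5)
Your uniqueness argument is correct and is in the same spirit as the paper's: since $G_0^0$ is a reduced, connected scheme and any morphism from such a scheme to $\hat\bW$ is trivial (as you correctly show via the nilpotence of the coefficients), two splittings can only differ by the trivial homomorphism.

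\textbf{Existence.} Here there are genuine gaps.

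First, the asserted \emph{lift of $\Sp(2n)$}: you state that the induced ``restricted metaplectic action'' on the Fock module produces a homomorphism $\Sp(2n) \to L^+\GL(P)$. This is precisely the hard point; one cannot simply invoke it. A priori one only obtains a projective action of $\Sp(2n)$ on $P$, and the question of whether it lifts to a linear one with values in $L^+\GL$ is equivalent to the $\Sp(2n)$-case of the lemma you are trying to prove. Without an argument, this step is circular. Second, the exponential formula $\tilde\lambda_f=\exp(\tau f/h)$ from Lemma~\ref{1-subgroups} is \emph{not} available for arbitrary $f\in m^{i+2}$: that lemma requires $f^{(p+1)/2}=0$, which fails already in degree $3$ for $f$ of the form $x_1x_2y_1$ (where $f^{(p+1)/2}=x_1^{(p+1)/2}x_2^{(p+1)/2}y_1^{(p+1)/2}\neq 0$ since $(p+1)/2<p$). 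The paper only uses that lemma for the single carefully chosen cubic generator in Lemma~\ref{H}, not for a spanning set of $m^{i+2}/m^{i+3}$. Third, the ``coherence'' step is the crux of the whole construction and is left as a gesture: you would have to show that each step-by-step extension of the section splits (i.e.\ compute an obstruction in some $\mathrm{Ext}^1$ against $\hat\bW$) and that the choices are compatible; this is not done and is not obviously easier than the original problem.

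\textbf{Comparison with the paper.} The paper sidesteps all three difficulties by quoting Proposition~\ref{apploopspgl}(iii), applied to $\pi^{-1}(G_0^0)\subset G$ with pro-unipotent normal subgroup $\underline{A}_h^*/\bG_m$. The key idea in the proof of that proposition is quite different from a piecewise lift: one first produces a \emph{scheme-theoretic} (not a priori multiplicative) section $\bar s\colon G_0^0\to \bar G:=\tilde G/L^+\bG_m$ by a Picard-group computation, and then observes that the defect of multiplicativity is a morphism $G_0^0\times G_0^0 \to \Gra_{\bG_m}$ from a connected reduced scheme, hence constant and thus trivial if one normalizes $\bar s(1)=1$. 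This ``any section is automatically a homomorphism'' mechanism replaces the metaplectic lift, the exponentials, and the inductive gluing in one stroke. If you want to salvage your approach, the minimal repair is to abandon the explicit piecewise lifts and instead prove directly that a scheme-theoretic section exists and is automatically a homomorphism, which is exactly what the paper does.
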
{}

\begin{proof}
 Recall from (\ref{fundcentralextensionbis}) the extension 
$$1 \to L\bG_m \to \widetilde G  \to   G \to 1. $$
The kernel  $\underline {A}_h ^*/\bG_m$ of surjection $\pi: G \to G_0$  is a pro-unipotent group scheme. Thus, by part (iii) of Proposition \ref{apploopspgl} the restriction of the above extension  to $\pi^{-1}G_0^0\mono G$ admits 
a unique reduction to $L^+  \bG_m$. Equivalently, the extension  
$$1\to \Gra_{\bG_m}  \to \widetilde G_0 \to G_0\to 1$$
admits a unique splitting $\upsilon: G^0_0 \to \tilde G_0$  over $G^0_0\subset G_0$.
It remains to show that $\upsilon$ lands in  $\nG_0$. From the proof of Lemma \ref{redtoW} $\nG_0$ is the kernel of a homomorphism $\widetilde G_0 \to \underline \bZ$. Since $G^0_0$ is connected its composition with $\upsilon$ is identically $0$ as desired.  
\end{proof}{}

Recall from Remark \ref{fundextliealg} an isomorphism of Lie algebras $\Lie  \nG_0 = \Lie \widetilde G_0 \iso   A_0 \oplus h^{-2} k[h^{-1} ]$. 
Also recall an identification $\Lie  G_0^0\iso m^2\subset A_0$.
\begin{lm}\label{splittings}
	The morphism $\Lie G_0^0 \to \Lie \nG_0$ induced by the splitting from Lemma  \ref{sectionlemma} equals the composition $m^2 \mono A_0 \mono \Lie  \nG_0$.
	\end{lm}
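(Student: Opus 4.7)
The plan is to compare the Lie algebra map $d\upsilon : m^2 \cong \Lie G_0^0 \to \Lie \nG_0$ (the differential of the group splitting $\upsilon$ of Lemma \ref{sectionlemma}) with the candidate $\sigma_0 : m^2 \hookrightarrow A_0 \hookrightarrow \Lie \widetilde G_0 = \Lie \nG_0$, where the second inclusion is the one from Remark \ref{fundextliealg}. First I would observe that $\sigma_0$ is itself a Lie algebra section of the projection $\Lie \widetilde G_0 \to \Lie G_0 = A_0/k$ (restricted to $m^2$): by Remark \ref{fundextliealg} the bracket induced on the $A_0$ summand is precisely the Poisson bracket, and the projection recovers the inclusion $m^2 \hookrightarrow A_0/k$. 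Consequently the difference $\delta := d\upsilon - \sigma_0$ takes values in the central subalgebra $\Lie \Gra_{\bG_m} = h^{-1}k[h^{-1}]$ and is a Lie algebra homomorphism from $m^2$ (with Poisson bracket) into an abelian Lie algebra, hence factors through $m^2 / [m^2, m^2]$. By Lemma \ref{commutator} this quotient is one-dimensional, spanned by the top monomial $\Omega := \prod_{i=1}^n x_i^{p-1}y_i^{p-1}$, so it suffices to verify $\delta(\Omega) = 0$.

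The key ingredient is that $\Omega^2 = 0$ already in the restricted Weyl algebra $A_h$, since $x_i^{2(p-1)} = 0$; in particular $\Omega^{(p+1)/2} = 0$. By Lemma \ref{1-subgroups}, the formula $\tilde\lambda_\Omega(\tau) = e^{\tau \Omega / h} = 1 + \tau \Omega / h$ defines a group homomorphism $\tilde\lambda_\Omega : \bG_a \to \widetilde G$, whose projection to $G$ is the exponential flow $\lambda_\Omega$ of the Hamiltonian $H_\Omega$. Since $\Omega \in m^2$, this flow lands in $G_0^0$. Projecting $\tilde\lambda_\Omega$ to $\widetilde G_0$ yields a group-theoretic lift of $\lambda_\Omega$ which, by connectivity of $\bG_a$, is contained in $\nG_0$; the splitting $\upsilon$ provides another such lift via $\upsilon \circ \lambda_\Omega$. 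The two lifts therefore differ by a group homomorphism $\bG_a \to \hat \bW$.

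The main obstacle is to rule out nontrivial group homomorphisms $\bG_a \to \hat\bW$. My plan is to argue that any such homomorphism is necessarily of the form $\tau \mapsto \exp(\tau\, q(h^{-1}))$ for some $q(h^{-1}) \in h^{-1}k[h^{-1}]$ (obtained by taking the logarithm of the one-parameter subgroup, which must be additive in $\tau$); for this exponential to be polynomial in $\tau$ in characteristic $p$, one needs $q^p = 0$, and as $h^{-1}k[h^{-1}]$ is an integral domain this forces $q = 0$. Hence $\upsilon \circ \lambda_\Omega$ agrees with the projection of $\tilde\lambda_\Omega$ to $\nG_0$. Differentiating at $\tau = 0$ and applying the identification of Remark \ref{fundextliealg}, under which $a \in A_0 \subset \Lie \widetilde G_0$ corresponds to the class of $\Ad_{1+\epsilon \tilde a/h}$, I obtain $d\upsilon(H_\Omega) = \Omega \in A_0$, i.e.\ $\delta(\Omega) = 0$. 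Combined with the reduction of the first paragraph this gives $d\upsilon = \sigma_0$, as required.
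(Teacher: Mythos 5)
Your first paragraph is correct and reproduces the paper's reduction: the difference $\delta = d\upsilon - \sigma_0$ is a Lie algebra map into the abelian $\Lie\hat\bW$, hence factors through the one-dimensional $m^2/[m^2,m^2]$, which by Lemma \ref{commutator} is spanned by the top monomial $\Omega = \prod_i x_i^{p-1}y_i^{p-1}$. The problem is in the second paragraph, at the step on which the whole verification rests: the claim that $\Omega^2 = 0$ in $A_h$ is false. In $A_h$ the variables $x_i$ and $y_i$ do not commute, and the commutators produce a non-vanishing lower-order term. A direct computation (using $y_i^{p-1}x_i^{p-1} = \sum_j \binom{p-1}{j}\frac{(p-1)!}{(p-1-j)!}\,h^j\,x_i^{p-1-j}y_i^{p-1-j}$, or just the matrix model where $x_i^{p-1}y_i^{p-1}$ acts as $(p-1)!\,h^{p-1}$ times a rank-one projector) gives
$$\Omega^2 = (-1)^n h^{n(p-1)}\,\Omega \neq 0,$$
so in fact $\Omega^k \neq 0$ for every $k \geq 1$. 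Thus the hypothesis $f^{(p+1)/2}=0$ of Lemma \ref{1-subgroups} fails for $f=\Omega$, and the proposed one-parameter subgroup $\tau \mapsto 1 + \tau\Omega/h$ is not a group homomorphism (the product $(1+\tau_1\Omega/h)(1+\tau_2\Omega/h)$ has the extra term $\tau_1\tau_2\Omega^2/h^2 = (-1)^n\tau_1\tau_2 h^{n(p-1)-2}\Omega$). Without the lift $\tilde\lambda_\Omega$, you have no way to compute $d\upsilon$ on the generator, so the argument does not close.

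As a secondary remark: the conclusion of your third paragraph (there are no nontrivial homomorphisms $\bG_a \to \hat\bW$) is correct, but the logarithm argument is unnecessarily indirect. It follows immediately from the fact that $\hat\bW(k[\tau]) = \{1\}$, since every element of $\hat\bW(R)$ has nilpotent coefficients and $k[\tau]$ is reduced; this is exactly Corollary \ref{hom}. So even if the rest were sound, you would not need the exponential/nilpotency reasoning there. The genuine gap, however, is the one above: the one-parameter subgroup you try to use as an explicit lift of $\lambda_\Omega$ does not exist, because $\Omega$ is not nilpotent in $A_h$.
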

\begin{proof}
	The difference of the two morphisms of Lie algebras is a homomorphism  from $\Lie G_0^0$ to the abelian Lie algebra $\Lie \hat \bW$.  
	 Thus it suffices to check that the morphisms coincide on the one-dimensional Lie algebra
	$\Lie G_0^0 / [\Lie G_0^0, \Lie G_0^0]$, which is immediate from Lemma \ref{G_0}.
\end{proof}

\subsection{Extensions of $\alpha $ by $\hat \bW$.}\label{alphaext}
In this subsection we shall apply the theory of restricted Lie algebras  to study the category of central extensions of the group scheme $\alpha$ by the group ind-scheme   $\hat \bW$.
Recall (see {\it e.g.}, \cite[Chapter II, \S 7]{gd}) that the Lie algebra of a group scheme $H$ over a field of characteristic $p>0$ is equipped with the $p$-th power operation giving
$\Lie H$  a restricted Lie algebra structure. We are not aware of a written account of such theory for group ind-schemes.  Therefore we shall use the following trick to reduce our problem to the well 
documented setup. 

For an affine scheme $S$ and an affine group scheme $H$ over $k$ denote by $\underline {\Mor}(S, H)$  the  {\it fpqc} sheaf of groups assigning to a scheme $T$ over $k$ the group 
$\Mor(S\times T, H)$. 

\begin{lm}
	Let $G$ be a finite connected group scheme  over $k$, $H$ a smooth commutative group scheme, and let $S$ be an affine scheme.  Then the groupoid  of central extensions of $G$ by  $\underline{\Mor}(S, H)$ in the category of  {\it fpqc} sheaves of groups 
	is equivalent to the category of central extensions of $G_S = G \times S$ by  $H_S= H\times S$ in the category of group schemes over $S$.
\end{lm}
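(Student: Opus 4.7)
Let $\pi: S \to \Spec k$ denote the structure morphism and let $(-)_S$ denote the pullback functor from fpqc sheaves on $\mathrm{Sch}/k$ to fpqc sheaves on $\mathrm{Sch}/S$; its right adjoint $\pi_*$ is given explicitly by $\pi_* F'(T) = F'(T \times_k S)$. One checks directly that $\pi_* H_S = \underline{\Mor}(S, H)$, and the counit of the adjunction furnishes an evaluation morphism $\mathrm{ev}: \underline{\Mor}(S, H)_S \to H_S$ of fpqc sheaves of groups on $S$.

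My plan is to construct mutually inverse functors $\Phi$ and $\Psi$ between the two groupoids of central extensions, using this adjunction. Given an extension $\mathcal{E}: 1 \to \underline{\Mor}(S, H) \to E \to G \to 1$ of fpqc sheaves over $k$, I would define $\Phi(\mathcal{E})$ to be the pushout along $\mathrm{ev}$ of the pulled-back extension $\mathcal{E}_S$ over $S$. Conversely, given an extension $\mathcal{F}: 1 \to H_S \to F \to G_S \to 1$ of group schemes over $S$, I would define $\Psi(\mathcal{F})$ to be the pullback along the unit $\eta_G: G \to \underline{\Mor}(S, G)$ of the left-exact sequence of fpqc sheaves obtained by applying $\pi_*$.

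The routine verifications would be: that $\Phi$ and $\Psi$ produce genuine short exact sequences (for $\Phi$, pushouts of extensions by abelian sheaves are right exact; for $\Psi$, fppf-surjectivity of $F \to G_S$ passes through the construction after pullback along $\eta_G$), and that $\Phi \circ \Psi$ and $\Psi \circ \Phi$ are canonically isomorphic to the identity via the triangle identities for the adjunction $(-)_S \dashv \pi_*$. At the level of isomorphism classes, this ultimately reduces to the adjunction isomorphism $\Ext^1_{\mathrm{Sh}/k}(G, \pi_* H_S) \cong \Ext^1_{\mathrm{Sh}/S}(G_S, H_S)$.

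The main obstacle is proving that $\Phi(\mathcal{E})$ is representable by a group scheme over $S$, not merely an fpqc sheaf of groups. Here both hypotheses on $G$ and $H$ come into play: by centrality of the extension, the projection $\Phi(\mathcal{E}) \to G_S$ carries a canonical $H_S$-torsor structure; finiteness of $G$ over $k$ makes $G_S$ an affine scheme finite over $S$; and smoothness of the commutative group scheme $H$ ensures that fppf $H_S$-torsors over any scheme are representable by affine schemes, via the standard descent argument for smooth affine group schemes. The group scheme structure on $\Phi(\mathcal{E})$ then follows automatically by Yoneda from its sheaf-theoretic group structure, since source and target of each operation are now representable.
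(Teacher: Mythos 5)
Your constructions $\Phi$ and $\Psi$ unwind to exactly the paper's formulas (the paper's $F(T)$ is the fiber product $\pi_*K \times_{\pi_*G_S} G$ evaluated at $T$, and its contracted product $F(T)\times^{\Mor(T\times S,H)}\Mor(T,H)$ is your pushout along $\mathrm{ev}$), so the approach is the same, just phrased via the adjunction $\pi^*\dashv \pi_*$. You also correctly identify and handle the representability issue for $\Phi(\mathcal{E})$ using centrality, finiteness of $G$, and smoothness of $H$, which matches the paper's appeal to Milne.

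The real gap is that you dismiss the surjectivity of $\Psi(\mathcal{F})\to G$ as ``routine'' and claim fppf-surjectivity of $F\to G_S$ ``passes through'' the construction. It does not: $\pi_*$ is Weil restriction and does not preserve epimorphisms of sheaves, and pulling back along $\eta_G$ does not repair this automatically. This step is precisely where the remaining hypotheses are used, and it is the only nontrivial part of showing $\Psi(\mathcal{F})$ is an extension. The paper's argument is: since $H_S$ is smooth and $F\to G_S$ is an $H_S$-torsor, $F\to G_S$ is formally smooth; since $G$ is finite and connected, the identity section $S\hookrightarrow G_S$ is a nilpotent thickening; by formal smoothness one extends the identity of $F$ along this thickening to a scheme-theoretic section $G_S\to F$, which then yields a set-theoretic section of $\Psi(\mathcal{F})\to G$. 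Without this, your exactness claim is unsupported. Relatedly, your appeal to an isomorphism $\Ext^1_{\mathrm{Sh}/k}(G,\pi_*H_S)\cong \Ext^1_{\mathrm{Sh}/S}(G_S,H_S)$ is too strong: the adjunction only gives this in degree $0$, or derivedly up to correction terms involving $R^i\pi_*H_S$, so it cannot be invoked to conclude the equivalence; one must check by hand (as you begin to) that $\Phi$ and $\Psi$ are mutually inverse.
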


\begin{proof}
	Assume we are given a central extension
	\begin{equation}
	1 \rightarrow H_S \rightarrow K \rar{\pi } G_S \rightarrow 1,
	\end{equation}
	and let us construct a central extension $F$ of $G$ by $\underline{\Mor}(S, H)$. For any scheme $T$ over $\Spec(k)$ define $F(T) = \{g \in K(T \times S) |  T \times S \rar{\pi(g)} G \times S \rightarrow G \text{ factors through the projection to } T\}$. It is easy to see that the resulting $F$ is a sheaf and that $\underline{\Mor}(S, H)$ injects into it, so it is left to prove that $F \rightarrow G$ is a surjection. Indeed, since the morphism $\pi$ is flat and $H_S$ is smooth we get that $K \rightarrow G_S$ is formally smooth (\cite{stacks} Lemma 29.33.3). Then since the map $\Spec(k) \times S \rightarrow G \times S$ is a nilpotent thickening we get that $\pi$ has a section. Thus $F \rightarrow G$ is surjective.

	Conversely, if we have a central extension
	\begin{equation}
	1 \rightarrow \underline{\Mor}(S, H) \rar{i} F \rightarrow G \rightarrow 1,
	\end{equation}
	define for any $S$-scheme $T$ the group $K(T)$ to be $F(T) \times^{\Mor(T \times S, H)} \Mor(T, H)$. Here the map $\Mor(T \times S, H) \rightarrow F(T)$ is induced by $i$ and $Mor(T \times S, H) \rightarrow \Mor(T, H)$ is defined to be the restriction to the graph of the structure morphism $T \to S$.  Let $H_S(T) \rightarrow F(T) \times \Mor(T, H)$ be the homomoprhism whose composition with the first projection takes $H_S(T)$ to the neutral element and whose composition with the second projection is the identity map. This defines an injection of sheaves $H_S \mono K$, making $K$ into an $H_S$-torsor over $G_S$ representable by a scheme (\cite[Chapter {\rm III}, Theorem 4.3]{Milne}). That is enough.
\end{proof}

\begin{cor}\label{equiv}
	The groupoid of central extensions of $\alpha
	$ by $\underline{\Mor}(\mathbb{A}^1, \mathbb{G}_m)$ in the category of {\it fpqc} sheaves is equivalent to the groupoid of central extensions of $\alpha   \times \mathbb{A}^1$ by $\mathbb{G}_m \times \mathbb{A}^1$ in the category of group schemes over $\mathbb{A}^1$.
\end{cor}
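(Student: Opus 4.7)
The plan is simply to verify that the hypotheses of the preceding lemma are met by the triple $(G, H, S) = (\alpha, \mathbb{G}_m, \mathbb{A}^1)$ and then invoke the lemma verbatim. Concretely, $\alpha = \alpha_p^{2n}$ is a finite connected group scheme over $k$ (it is the $2n$-fold product of the infinitesimal group scheme $\alpha_p$, hence finite with a single underlying point), the multiplicative group $\mathbb{G}_m$ is smooth and commutative, and $\mathbb{A}^1 = \Spec k[t]$ is affine. These are exactly the conditions required, so the preceding lemma produces the desired equivalence of groupoids.

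The only point worth emphasising, though not really an obstacle, is the identification $\underline{\Mor}(\mathbb{A}^1, \mathbb{G}_m)$ on the source side with the base-change $\mathbb{G}_m \times \mathbb{A}^1$ (viewed as a group scheme over $\mathbb{A}^1$) on the target side; this is built into the statement of the lemma via the functorial description of $F(T)$ in terms of $K(T\times S)$. Connectedness of $\alpha$ is what makes the formal smoothness argument in the lemma go through: the thickening $\Spec k \times S \hookrightarrow \alpha \times S$ is a nilpotent immersion, and smoothness of $\mathbb{G}_m$ lets one lift a set-theoretic section to a scheme-theoretic one, giving the surjectivity of the map out of $F$ in the construction of the central extension.

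Hence Corollary \ref{equiv} follows directly from the preceding lemma, with no additional argument required beyond recording that the hypotheses are satisfied.
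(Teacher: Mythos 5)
Your proposal is correct and matches the paper's implicit reasoning: the corollary is stated without proof precisely because it is an immediate specialization of the preceding lemma, and you have correctly verified that $\alpha = \alpha_p^{2n}$ is finite connected, $\mathbb{G}_m$ is smooth commutative, and $\mathbb{A}^1$ is affine. Nothing further is needed.
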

\begin{rem}\label{mor}
	The above groupoids are discrete {\it i.e.}, objects do not have non-trivial automorphisms. Indeed, the Cartier dual group to $\alpha  $ is isomorphic to itself. In particular, it has no non-trivial $\mathbb{A}^1$ points. Hence every homomorphism $\alpha   \times \mathbb{A}^1 \to \mathbb{G}_m \times \mathbb{A}^1$ in the category of group schemes over $\mathbb{A}^1$ is trivial.
\end{rem}

Observe that the evaluation at $0$ defines a split surjection $\underline{\Mor}(\mathbb{A}^1, \mathbb{G}_m) \to \mathbb{G}_m$, whose kernel is identified with $\hat \bW$. Hence we have a decomposition $\underline{\Mor}(\mathbb{A}^1, \mathbb{G}_m) = \hat \bW \times \mathbb{G}_m$.

Declare the restricted Lie algebra of $\Gra_G$ to be $\Lie(\Gra_{G}^0) = h^{-1}k[h^{-1}]$ with the trivial Lie bracket and the restricted power operation given by the absolute Frobenius.

\begin{Th}
	The groupoid of central extensions of the group scheme $\alpha  $ by $\hat \bW$ is equivalent to the groupoid of central extensions of the restricted Lie algebra $\Lie(\alpha  )$ by $\Lie(\hat \bW)$.  
\end{Th}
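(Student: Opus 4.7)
The strategy is to combine Corollary~\ref{equiv} with the classical theory of height-one infinitesimal group schemes and reduce the assertion to a cohomological comparison.

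First, I would observe that both groupoids are discrete. The evaluation-at-zero splitting $\underline{\Mor}(\bA^1,\bG_m)=\bG_m\times\hat\bW$ identifies central extensions of $\alpha$ by $\hat\bW$ with those central extensions of $\alpha$ by $\underline{\Mor}(\bA^1,\bG_m)$ whose $\bG_m$-component is trivial; combined with Corollary~\ref{equiv} and Remark~\ref{mor} this gives discreteness on the group-scheme side. On the restricted Lie algebra side, any morphism $\phi:\Lie\alpha\to\Lie\hat\bW$ of restricted Lie algebras must satisfy $\phi(x)^p=\phi(x^{[p]})=0$ since $[p]$ is trivial on $\Lie\alpha$; but $\Lie\hat\bW=h^{-1}k[h^{-1}]$ with absolute Frobenius as $[p]$ is reduced, so $\phi=0$. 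Therefore the groupoid of restricted extensions is also discrete, and it suffices to give a bijection on isomorphism classes.

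The comparison functor is taking Lie algebras: a central extension $1\to\hat\bW\to E\to\alpha\to 1$ gives a central extension $0\to\Lie\hat\bW\to\Lie E\to\Lie\alpha\to 0$ of restricted Lie algebras, the restricted structure on $\Lie E$ being inherited from its structure as the Lie algebra of an (ind-)group scheme. To prove bijectivity, I would apply Corollary~\ref{equiv} to convert the source side into central extensions of $\alpha\times\bA^1$ by $\bG_m\times\bA^1$ in the category of group schemes over $\bA^1$. Since $\alpha=\alpha_p^{2n}$ is a commutative height-one infinitesimal group scheme, the Demazure--Gabriel/Jacobson equivalence between height-one group schemes over an affine base $S$ and their restricted Lie $\cO_S$-algebras lifts to central extensions and gives a classification of central extensions of $\alpha\times S$ by $\bG_m\times S$ by a restricted Hochschild cohomology group $H^2_{*,\mathrm{rest}}(\Lie\alpha,\cO_S)$, with $\cO_S$ viewed as an abelian restricted Lie algebra under absolute Frobenius. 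Taking $S=\bA^1$ and matching the decompositions $\cO_{\bA^1}=k\oplus h^{-1}k[h^{-1}]$ and $\underline{\Mor}(\bA^1,\bG_m)=\bG_m\times\hat\bW$, the summand corresponding to $\hat\bW$ is $H^2_{*,\mathrm{rest}}(\Lie\alpha,\Lie\hat\bW)$, which by Hochschild's classification theorem is precisely the set of isomorphism classes of restricted Lie algebra central extensions of $\Lie\alpha$ by $\Lie\hat\bW$.

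The main obstacle is making the middle step fully rigorous when the coefficient group is the ind-scheme $\hat\bW$ rather than a finite-dimensional group, and verifying that the $[p]$-operation on the Lie algebra side faithfully reflects the multiplicative structure of the $\bG_m$-factor. The approach is to write $\hat\bW=\varinjlim_n\hat\bW_n$ as a filtered colimit of finite unipotent group subschemes (polynomials in $h^{-1}$ of bounded degree), apply the classical finite-dimensional comparison between group-scheme Hochschild cohomology of $\alpha$ and restricted Lie algebra cohomology of $\Lie\alpha$ level by level (exponentiating $2$-cocycles to group extensions using that $\alpha$ has height one, so the exponential series terminates), and then pass to the colimit, using that both cohomologies commute with filtered colimits in the coefficient when the source is finite. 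Matching Frobenius with the multiplicative $p$-th power reduces, after passing to the formal neighborhood of the identity in $\bG_m$, to the identification of the restricted structure on the Lie algebra of a smooth commutative group with the additive Frobenius on its coordinate ring.
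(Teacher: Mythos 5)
Your proposal correctly identifies the two key tools — Corollary~\ref{equiv} to convert $\hat\bW$-extensions into families over $\bA^1$, and the Demazure--Gabriel height-one equivalence — and the discreteness observations are fine. But the middle step, where you claim that central extensions of $\alpha\times S$ by $\bG_m\times S$ are classified by a ``restricted Hochschild cohomology'' group $H^2_{*,\mathrm{rest}}(\Lie\alpha,\cO_S)$, has a genuine gap, and the place you flag as the ``main obstacle'' is exactly where the argument fails.

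The Demazure--Gabriel equivalence (\cite[Ch.~II, \S7, Thm.~3.5]{gd}) relates \emph{finite group schemes of height one} to their restricted Lie algebras. In a central extension $1\to\bG_m\times\bA^1\to K\to\alpha\times\bA^1\to 1$ the coefficient group $\bG_m$ is smooth, so $K$ is neither finite nor of height one, and the equivalence simply does not apply; passing to Lie algebras is lossy here. The paper circumvents this with a reduction you skip: since multiplication by $p$ kills $\alpha$ and is surjective on $\hat\bW$, and $\Hom(\alpha,\hat\bW)=0$, every extension of $\alpha$ by $\hat\bW$ reduces uniquely to the $p$-torsion subgroup (a $\mu_p$-type coefficient). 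After Corollary~\ref{equiv} one is then looking at extensions of $\alpha\times\bA^1$ by $\mu_p\times\bA^1$; using Remark~\ref{mor} one shows the Frobenius on the total space $K$ is trivial, so $K$ itself is finite of height one, and Demazure--Gabriel can be applied to $K$ \emph{as a group scheme}, not as a cohomology class. Your filtered-colimit patch does not repair this: $\hat\bW$ is not a filtered union of finite group subschemes (the set of series $1+\sum_{i\le N}a_ih^{-i}$ of bounded degree is not closed under multiplication), and in any case the obstruction lies in the $\bG_m$-direction rather than the $\hat\bW$-direction, so truncating $\hat\bW$ does not help. You should insert the $p$-torsion reduction before invoking height-one theory, and then apply the equivalence to the extension group scheme itself rather than to a cohomology group.
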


\begin{proof}
	Using that the multiplication by $p$ is $0$ on $\alpha$ and surjective on $\hat \bW$
	it follows that every extension of $\alpha$ by  $\hat \bW$ admits a reduction to 
	$\Gra^0_{\mu_p}$. Moreover, since $\Hom (\alpha, \hat \bW) =0$ (by Corollary \ref{hom}), such a reduction is unique. Thus the groupoid
	of central extensions of $\alpha$ by $\hat \bW$ is equivalent to 
	the groupoid
	of central extensions of $\alpha$ by $ \Gra^0_{\mu_p}$.
	The groupoid of central extensions of $\alpha$ by $\Gra_{\mu_p}^0$ is equivalent to a full subcategory of the groupoid of central extensions of $\alpha \times \mathbb{A}^1$ by $\mu_p \times \mathbb{A}^1$. This subcategory classifies families of central extensions whose fiber over $0 \in \mathbb{A}^1$ is a trivial extension.
	
	Next we claim that for an extension 
	\begin{equation}
	\mu_p \times \mathbb{A}^1 \rightarrow K \rightarrow \alpha   \times \mathbb{A}^1
	\end{equation}
	the $\mathbb{A}^1$-group scheme is of height $1$. Indeed, the Frobenius map $K \to K$ factors as $K \to \alpha \times \mathbb{A}^1 \to \mu_p \times \mathbb{A}^1 \to K$. From Remark \ref{mor} we conculde that this map is trivial.
	
	Thus, by \cite[Chapter {\rm II}, \S 7, Theorem 3.5]{gd} the category of central extensions of $\alpha \times \mathbb{A}^1$ by $\mu_p \times \mathbb{A}^1$ is equivalent to the category of extensions of corresponding restricted Lie algebras over $\mathbb{A}^1$. The latter is equivalent to the category of extensions of $\Lie(\alpha)$ by $\Lie(\mu_p \times \mathbb{A}^1)$ in the category of restricted Lie algebras over $k$. The above equivalence unduces an equivalence between the subcategories of central extensions of $\alpha \times \mathbb{A}^1$ by $\mu_p \times \mathbb{A}^1$ trivial over $0 \in \mathbb{A}^1$ and the category of restricted Lie algebra extensions of $\Lie(\alpha  )$ by $\Lie(\hat \bW)$.
 
\end{proof}

\begin{cor}\label{extensions}
	The groupoid of central extensions of the group scheme $\alpha  $ by $\hat \bW$ which split over any factor $\alpha_{p} \subset \alpha  $ is equivalent to the set of $\Lie(\hat \bW)$-valued skew-symmetric bilinear forms on $\Lie(\alpha)$ viewed as a groupoid with no non-trivial morphisms.
\end{cor}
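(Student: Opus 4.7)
The plan is to deduce the corollary from the preceding theorem, which equates the groupoid of central extensions of $\alpha$ by $\hat{\bW}$ with the groupoid of central restricted Lie algebra extensions of $\Lie(\alpha)$ by $\Lie(\hat{\bW})$. Under this equivalence, splitting of the group extension over the $i$-th $\alpha_p$ factor corresponds to splitting of the restricted Lie extension over the $1$-dimensional subalgebra $\Lie(\alpha_p) = k e_i \subset \Lie(\alpha)$.

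Given such a restricted Lie extension $0 \to \Lie(\hat{\bW}) \to \mathfrak{e} \to \Lie(\alpha) \to 0$, I would pick a $k$-linear section $s$ and extract the standard invariants $c(x,y) = [s(x), s(y)]$ and $\chi(x) = s(x)^{[p]}$, both valued in $\Lie(\hat{\bW})$. The form $c$ is automatically skew-symmetric bilinear, since $\Lie(\alpha)$ is abelian and its bracket lands in the center. The map $\chi$ is $p$-semilinear: the homogeneity $\chi(\lambda x) = \lambda^p \chi(x)$ is immediate, and additivity follows from the Jacobson identity since, for $p \geq 3$, each correction term $s_i(s(x), s(y))$ is a Lie polynomial in $s(x), s(y)$ involving at least one nested bracket of depth $\geq 2$, and every such bracket vanishes because $[s(x), s(y)] = c(x,y)$ already lies in the center. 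The splitting hypothesis over the $i$-th $\alpha_p$ means that $s(e_i)$ can be shifted by some $z_i \in \Lie(\hat{\bW})$ so that $\chi(e_i) + F(z_i) = 0$, where $F$ denotes the Frobenius on $\Lie(\hat{\bW})$. Performing this shift on a basis adapted to $\alpha = \alpha_p^{2n}$ and extending $k$-linearly, I can arrange $\chi$ to vanish on the basis; additivity and $p$-semilinearity then force $\chi \equiv 0$, while $c$ is unaffected because the $z_i$ are central. Conversely, every skew bilinear form $c$ is realised by the restricted Lie algebra structure on $\Lie(\alpha) \oplus \Lie(\hat{\bW})$ with bracket $[(x,z),(x',z')] = (0, c(x,x'))$ and $p$-operation $(x,z)^{[p]} = (0, F(z))$, which plainly splits over each $k e_i$.

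The only subtle step, and the one I expect to demand the most care, is well-definedness of $c$ together with the absence of non-trivial automorphisms of extensions. Both statements reduce to injectivity of $F$ on $\Lie(\hat{\bW})$, which I plan to derive from Corollary \ref{hom}: a restricted Lie homomorphism $\Lie(\alpha) \to \Lie(\hat{\bW})$ is exactly a $k$-linear map whose image is killed by $F$ (as $\Lie(\alpha)$ has trivial bracket and trivial $p$-operation), so the equivalence of the preceding theorem identifies $\Hom_{\text{res-Lie}}(\Lie(\alpha), \Lie(\hat{\bW}))$ with $\Hom_k(\Lie(\alpha), \ker F)$, forcing $\ker F = 0$. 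Once $F$ is injective, any two sections with $\chi = 0$ differ by a $k$-linear $\phi$ satisfying $F \circ \phi = 0$, hence $\phi = 0$; this pins $c$ down uniquely and rules out non-trivial automorphisms, yielding the claimed equivalence with the discrete groupoid of skew-symmetric bilinear forms.
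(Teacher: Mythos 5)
Your argument is correct and follows essentially the same route as the paper: reduce via the preceding theorem to restricted Lie algebra extensions, read off the skew form from the bracket, use that the Jacobson correction terms $s_i$ vanish for $p\geq 3$ since the bracket is central, and combine the splitting hypothesis with injectivity of the $[p]$-operation on $\Lie(\hat\bW)$ to normalise the section. The paper phrases the final step intrinsically --- the subspace $V=\ker([p])$ projects isomorphically onto $\Lie(\alpha)$ --- which is equivalent to your basis-dependent shift of $s$ killing $\chi$. One small remark: your derivation of $\ker F = 0$ via Corollary~\ref{hom} and the discreteness of the groupoid is more roundabout than necessary; by the declaration preceding the theorem, the $[p]$-operation on $\Lie(\hat\bW)=h^{-1}k[h^{-1}]$ is simply $f\mapsto f^p$, which is visibly injective over the field $k$.
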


\begin{proof}
	Given a central extension of $\alpha  $ by $\hat \bW$ let 
	\begin{equation}\label{extlie}
	0\to \Lie(\hat \bW) \to \cL \to \Lie(\alpha)\to 0
	\end{equation}
	be the corresponding extension of Lie algebras. The commutator on $\cL$ defines a $\Lie(\hat \bW)$-valued skew-symmetric bilinear form on $\Lie(\alpha)$. To construct the functor in the other direction set $\cL = \Lie(\hat \bW) \oplus \Lie(\alpha)$ as a vector space. The skew-symmetric form defines a Lie bracket on $\cL$ making $\cL$ a central extension of Lie algebras. Define the restricted power operation on $\cL$ by the formula $(f, g)^{[p]} = f^{[p]}$. Let us check that $\cL$ is a restricted Lie algebra. We have to check that the restricted power operation satisfies $$(X+Y)^{[p]}=X^{[p]}+Y^{[p]}+\sum_{i=1}^{p-1} \frac{s_{i}(X, Y)}{i}$$
	for $X$ and $Y$ in the Lie algebra, and $s_{i}(X, Y)$ being the coefficient of $t^{i-1}$ in the formal expression $\operatorname{ad}(t X+Y)^{p-1}(X)$.  Since $p > 2$ the polynomial $s_{i}(X, Y) = 0$ for every $i$ as desired. It remains to check that every extension (\ref{extlie}) of restricted Lie algerbas that splits over every factor $\Lie(\alpha_p) \subset \Lie(\alpha)$ arises this way. As observes above, the restricted power operation on $\cL$ is additive. Now consider the subspace $V$ of $ \cL$ consisting of elements annulated by $[p]$-power operation. The projection defines an embedding $V \mono \Lie(\alpha)$. Since the extension has a section over each $\Lie(\alpha_p)$ the embedding is an isomorphism and we win.
\end{proof}

\subsection{Geometric description of $\nG_0$.}\label{proof2}
Let
\begin{equation}\label{ext3}
1\to \hat \bW \to \nG_0 \to G_0\to 1
\end{equation}{}
be the extension from Lemma \ref{redtoW}, and let $\widetilde{\alpha}$ be its restriction to $\alpha  $.

 Let us check that $\widetilde{\alpha}$ satisfies the assumptions of Corollary \ref{extensions}, that is splits over every subgroup $\alpha_p \subset \alpha$. For any $v \in V$ set $f = \omega_V(v, \cdot) \in V^*$ and define a homorphism  $$\alpha_{p} = \Spec k[\epsilon] / (\epsilon^p) \to \widetilde{G} \subset  \underline{A_h [h^{-1}]^*}, \quad 
\epsilon \mapsto e^{\frac{\epsilon f}{h}}.$$ 
Here $e^{\frac{\epsilon f}{h}}$ denotes the restricted exponent, i.e. $e^{\frac{\epsilon f}{h}} = 1 + \frac{\epsilon f}{h} + \ldots + \frac{1}{(p-1)!}(\frac{\epsilon f}{h})^{p-1}$.
It is easy to see that $e^{\frac{\epsilon f}{h}} \in \underline{A_h [h^{-1}]^*}(\alpha_p)$ normalizes the lattice $A_h \otimes k[\epsilon] / (\epsilon^p)$; therefore it lies in $\widetilde{G}(\alpha_p)$. Then the composition of this homomoprhism with the projection to $\widetilde G_0$ is a lift of the embedding   $\alpha_p \subset \alpha$ coresponding to $v$ as desired.

The natural action of $\Sp_{2n} \subset G_0^0 \subset G_0$ on $\alpha$ by conjugation lifts to an action on $\widetilde \alpha$. Indeed, by Lemma \ref{sectionlemma} the extension (\ref{ext3}) splits uniquely over $\Sp_{2 n} \subset G_0^0$.

For future purposes note that the symplectic basis $(x_i, y_i)$ for $V^*$ gives rise to a scheme-theoretic section of $\pi: \widetilde \alpha \to \alpha$. Namely, $\alpha \cong \alpha_{p}^{2n}$, and we define 
\begin{equation}\label{section}
t : \alpha = \Spec(k[\epsilon_1, \ldots \epsilon_n, \delta_1, \ldots \delta_n,]/(\epsilon_i^p = \delta_i^p = 0)) \to \tilde{\alpha} 
\end{equation}
by $e^{\frac{\epsilon_1 x_1}{h}} \ldots e^{\frac{\epsilon_n x_n}{h}} e^{\frac{\delta_1 y_1}{h}} \ldots e^{\frac{\delta_n y_n}{h}}$. The section $t$ is not a group homomoprhism and it does depend on the choice of symplectic basis. However, its differential 
$$dt(e) : \Lie(\alpha) \to \Lie(\widetilde{\alpha})$$
is the unique linear map compatible with the restricted power operation.

%Although $\hat \bW$ and $\widetilde{\alpha}$ are not group schemes but merely a sheaf of groups, we can consider $\hat \bW$-connections on the torsor $\widetilde{\alpha}$. 
Recall that  a connection on a (trivial) $\hat \bW$-torsor $\widetilde{\alpha}$  is a function
\begin{equation}
\nabla : \{ \text{sections } s: \alpha \rightarrow \widetilde{\alpha} \} \rightarrow \Omega_{\alpha}^1 \otimes \Lie(\hat \bW) 
\end{equation}
such that, for any $c \in \hat \bW(\alpha) $, one has $\nabla(cs) = \nabla(s) + c^{-1}dc$. 	Denote by $\conn(\widetilde{\alpha}, \hat \bW)$ the set of $\hat \bW$-connections on $\widetilde{\alpha}$.
More generally, we define the space $\underline{\conn}(\widetilde{\alpha}, \hat \bW)$ of connections on $\widetilde{\alpha}$ to be the functor $(k-\text{algebras})^{op}\to \text{Sets}$ sending an algebra $R$ to
the set of functions 
\begin{equation}
\nabla: \{ \text{sections } s: \alpha \times \Spec R \rightarrow \widetilde{\alpha} \} \rightarrow \Omega_{\alpha}^1 \otimes \Lie(\hat \bW) \otimes R
\end{equation}
with  $\nabla(cs) = \nabla(s) + c^{-1}dc$, for every $c\in \bW( \alpha \times \Spec R )$. 
The group scheme $S_{\alpha}$  of automorphisms of the scheme $\alpha$ acts on the space   $\underline{\conn}(\widetilde{\alpha}, \hat \bW)$. In particular, for any subgroup $H\subset S_{\alpha}$, we have 
a subset $\conn(\widetilde{\alpha}, \hat \bW)^{H}\subset \conn(\widetilde{\alpha}, \hat \bW)$ of $H$-invariant connections. 
\begin{lm}\label{l1}
	There exists a unique $ \Sp_{2n} \ltimes \alpha$-invariant $\hat \bW$-connection $\nabla$ on $\widetilde{\alpha}$.  
\end{lm}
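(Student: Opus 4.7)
\emph{Uniqueness.} The space of all connections on the trivial $\hat{\bW}$-torsor $\widetilde{\alpha}$ is a torsor over $\Gamma(\alpha, \Omega^1_\alpha) \otimes_k \Lie(\hat{\bW})$, and this torsor structure is evidently $\Sp_{2n} \ltimes \alpha$-equivariant. Hence the set of invariant connections is either empty or a torsor over the invariants of this vector space, and it suffices to show those invariants vanish. The $\alpha$-invariant $1$-forms on $\alpha = \Spec A_0$ are precisely the translation-invariant ones, namely the $k$-span of $dx_1,\ldots,dx_n,dy_1,\ldots,dy_n$; as an $\Sp_{2n}$-module this is canonically isomorphic to $V^*$. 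Since the standard representation $V^*$ of $\Sp_{2n}$ has no nonzero invariants, we conclude $[\Gamma(\alpha, \Omega^1_\alpha) \otimes \Lie(\hat{\bW})]^{\Sp_{2n} \ltimes \alpha} = 0$, proving uniqueness.

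\emph{Existence.} Just before the statement of the lemma we verified that $\widetilde{\alpha}$ splits over every subgroup $\alpha_p \subset \alpha$, so Corollary \ref{extensions} applies and gives a canonical vector space decomposition $\Lie(\widetilde{\alpha}) = \Lie(\hat{\bW}) \oplus V$, where $V$ is the subspace of elements annihilated by the $[p]$-operation and projects isomorphically onto $\Lie(\alpha)$. Because the conjugation action of $\Sp_{2n}$ on $\Lie(\widetilde{\alpha})$ commutes with the $[p]$-operation, $V$ is $\Sp_{2n}$-stable. Define the horizontal distribution on $\widetilde{\alpha}$ by left-translating $V$: for each point $g \in \widetilde{\alpha}$, set $\mathrm{Hor}_g := (L_g)_* V$. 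Centrality of $\hat{\bW}$ in $\widetilde{\alpha}$ implies that $\mathrm{Hor}$ is $\hat{\bW}$-equivariant, so it determines a connection $\nabla$ on the torsor $\widetilde{\alpha} \to \alpha$. Left-invariance of $\mathrm{Hor}$ yields $\alpha$-invariance (the ambiguity in the choice of lift $\tilde{a} \in \widetilde{\alpha}$ of $a \in \alpha$ is absorbed by the central $\hat{\bW}$, which preserves $\mathrm{Hor}$). For $\Sp_{2n}$-invariance, use that for any group automorphism $\phi$ one has $\phi \circ L_g = L_{\phi(g)} \circ \phi$, which together with $\phi_* V = V$ gives $\phi_* \mathrm{Hor}_g = \mathrm{Hor}_{\phi(g)}$.

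\emph{Main obstacle.} The principal technical hurdle is that $\widetilde{\alpha}$ is a group ind-scheme rather than an ordinary scheme, so the manipulations with left translations and horizontal subspaces must be interpreted with care. A concrete remedy is to translate the abstract construction into an explicit $1$-form $\omega_0 \in \Omega^1_\alpha \otimes \Lie(\hat{\bW})$ representing $\nabla$ relative to the scheme-theoretic section $t$ of~(\ref{section}); this $\omega_0$ can be computed directly from the Heisenberg-type $2$-cocycle corresponding under Corollary \ref{extensions} to the symplectic form $\omega_V$, and the required invariances can then be verified at the level of this explicit formula.
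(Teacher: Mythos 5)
Your argument is correct and matches the paper's proof in substance: uniqueness comes down to the vanishing of $\Sp_{2n}$-equivariant maps from the standard representation $V$ to the trivial representation $\Lie(\hat\bW)$, and existence uses the unique retraction $\Lie(\widetilde\alpha) \to \Lie(\hat\bW)$ compatible with the $[p]$-operation supplied by Corollary~\ref{extensions}. The only cosmetic difference is that the paper phrases this retraction directly as the functional $f$ defining an $\alpha$-invariant connection, which sidesteps the ind-scheme issues about horizontal distributions you flag, and then (after the lemma) records the explicit formula $\nabla(t)=\eta/h$ that your ``main obstacle'' paragraph anticipates.
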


\begin{proof}
	Denote by $\conn(\widetilde{\alpha}, \hat \bW)^{\alpha}$ the set of $\alpha$-invariant connections. We have that $$\conn(\widetilde{\alpha}, \hat \bW)^{\alpha} = \{\text{linear maps }f: \Lie(\widetilde{\alpha}) \rightarrow \Lie(\hat \bW) \text{ such that } f \vert_{\Lie(\hat \bW)} = \Id\}.$$
For an $\alpha$-invariant connection $\nabla$,  the corresponding $f$ is given by the formula 
$$ (\nabla(s) - ds)  \circ d\pi(e) + \Id,$$ for any section $s$. 
%And since $d\pi$ is surjective we can recover a connection from $f$. 
	
	Now since $\Sp_{2n}$ normalizes $\alpha$ in $G_0^0$ we get that $\Sp_{2n}$ acts on $\conn(\widetilde{\alpha}, \hat \bW)^{\alpha}$. Suppose we are given two $\Sp_{2n}$-invariant connections in $\conn(\widetilde{\alpha}, \hat \bW)^{\alpha}$. Then their difference gives a morphism $\Lie(\alpha) \rightarrow \Lie(\hat \bW)$ of  representation of $Sp_{2n}$, which has to be trivial since  $\Lie(\alpha)$ is a non-trivial irreducible representation whereas the action of  $Sp_{2n}$ on $\Lie(\hat \bW)$ is trivial.  Thus we get the uniqueness.

	To prove the existence take (a unique) $f : \Lie(\tilde \alpha) \rightarrow \Lie(\hat \bW)$ that commutes with the restricted power operation (see the proof of Corollary \ref{extensions}). This moprhism is $\Sp_{2n}$-invariant since the action of $\Sp_{2n}$ respects the restricted structure. 
	
\end{proof}

	We will need an explicit formula for the $\Sp_{2n} \ltimes \alpha$-invariant connection $\nabla$. Let $t$ be the section defined in (\ref{section}). We claim 
	that
	\begin{equation}\label{explicitformula}
	\nabla(t) = \frac{\eta}{h} = \Sigma \frac{\delta_id\epsilon_i}{h}
	\end{equation}	
	To see this let us show that the connection given by (\ref{explicitformula}) is $\Sp_{2n} \ltimes \alpha$-invariant. Pick a $k$-algebra $R$ and a point $a \in \alpha(R)$ given by $\epsilon_i \mapsto \epsilon_i^{\prime} \in R$. Then $t(a)$ is an $R$-point of $\widetilde{\alpha}$ that acts on $\widetilde{\alpha}$ by translation $\gamma$. The composition $\gamma \circ t : \alpha \times \Spec(R) \to \widetilde{\alpha} \times \Spec(R)$ is given by the formula   
		\begin{equation}\label{a}
	t(a)
	\Pi e^{\frac{\epsilon_ix_i}{h}} 
	\Pi e^{\frac{\delta_iy_i}{h}} = 
	e^{-\frac{\Sigma \epsilon_i\delta_i^{\prime}}{h}}
	\Pi e^{\frac{(\epsilon_i + \epsilon_i^{\prime})x_i}{h}} 
	\Pi e^{\frac{(\delta_i + \delta_i^{\prime})y_i}{h}}. 
	\end{equation} 
	Let $\bar{\gamma}$ be the translation by $a$ acting on $\alpha$. Then $t_{\gamma} = \gamma^{-1}t\bar{\gamma}$ defines another section of $\pi$. 
	The	invariance of $\nabla$ under the action of $\gamma$ reads as
	\[
	\bar{\gamma}^*\nabla(t) = \nabla(t_{\gamma}).
	\]

	Since $\bar{\gamma}(\epsilon_j) = \epsilon_j + \epsilon_j^{\prime}$ and $\bar{\gamma}(\delta_j) = \delta_j + \delta_j^{\prime}$, we have that 
	\[
	\bar{\gamma}^*\nabla(t) = 	\bar{\gamma}^*\frac{\eta}{h} = \frac{\eta}{h} + \Sigma \frac{\delta_i^{\prime}d\epsilon_i}{h}
	\]
	On the other hand, from (\ref{a}) we have $t_{\gamma} = e^{\frac{\Sigma \epsilon_i\delta_i^{\prime}}{h}}t$, and therefore
	\[
	\nabla(t_{\gamma}) = \nabla(t) + e^{-\frac{\Sigma \epsilon_i\delta_i^{\prime}}{h}}de^{\frac{\Sigma \epsilon_i\delta_i^{\prime}}{h}} = \frac{\eta}{h} + \Sigma \frac{\delta_i^{\prime}d\epsilon_i}{h}.
	\]
	Thus $\nabla$ is $\alpha$-invariant. Let us show that $\nabla$ is also $\Sp_{2n}$-invariant. Indeed, the morphism $\{f: \Lie(\widetilde{\alpha}) \rightarrow \Lie(\hat \bW)\}$ coincides with the differential of $t$, which is, as we observed above, a unique linear map compatible with the restricted power operation. Therefore it is  $\Sp_{2n}$-invariant.

  Define $S_{\alpha}$ to be the group scheme of automorphisms of the scheme $\alpha$, that is $$S_{\alpha}(T) = \Aut_T(\alpha \times T).$$ Define also $S_{\widetilde{\alpha}}^{\hat \bW}$ to be the {\it fpqc} sheaf of automorphisms of the torsor $\widetilde{\alpha}$, that is
   $$S_{\widetilde{\alpha}}^{\hat \bW}(T) = \{ \phi \in S_{\alpha}(T), \tilde \phi : \widetilde{\alpha} \times T \rightarrow \phi^*(\widetilde{\alpha} \times T)\}.$$
   Finally, define $S_{\widetilde{\alpha}}^{\nabla}$ to be subsheaf of $S_{\widetilde{\alpha}}^{\hat \bW}$ of endomorphisms preserving the connection $\nabla$ on $\widetilde{\alpha}$.
 \begin{lm}\label{l2}
 	The morphism $ S_{\widetilde{\alpha}}^{\hat \bW} \rightarrow S_{\alpha}$ fits into a short exact sequence
 	\[
 		1\to \Mor(\alpha, \hat \bW) \rightarrow S_{\widetilde{\alpha}}^{\hat \bW} \rightarrow S_{\alpha} \to 1.
 	\]
 	Moreover, the kernel of the composition $S_{\widetilde{\alpha}}^{\nabla} \mono  S_{\widetilde{\alpha}}^{\hat \bW} \to S_{\alpha}$ is the sheaf $\hat \bW \subset \Mor(\alpha, \hat \bW)$ of constant maps. Finally, the image of  $S_{\widetilde{\alpha}}^{\nabla}$ in $ S_{\alpha}$ belongs to $G_0$.

 \end{lm}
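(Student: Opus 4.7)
The plan is to reduce all three statements to a computation in the trivialization of $\widetilde{\alpha}$ provided by the scheme-theoretic section $t\colon \alpha \to \widetilde{\alpha}$ of (\ref{section}). Since $t$ makes $\widetilde{\alpha}$ a trivial $\hat \bW$-torsor $\alpha \times \hat \bW$, for any $\phi \in S_{\alpha}(T)$ the pulled-back torsor $\phi^{*}\widetilde{\alpha}$ is likewise trivialized by $\phi^{*}t$, and any $\hat \bW$-equivariant morphism $\tilde\phi$ covering $\phi$ is encoded in these trivializations by an element $c \in \Mor(\alpha,\hat \bW)(T)$ via $\tilde\phi(t(a)) = t(\phi(a)) \cdot c(a)$. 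Part (1) follows at once: the kernel of $S_{\widetilde{\alpha}}^{\hat \bW}\to S_{\alpha}$ consists of those $c$ with $\phi = \Id$, hence equals $\Mor(\alpha,\hat \bW)$, and every $\phi \in S_{\alpha}(T)$ is lifted by $c = 1$.

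For parts (2) and (3), the key step is to combine $\nabla(t) = \eta/h$ from (\ref{explicitformula}) with the transformation law $\nabla(cs) = \nabla(s) + c^{-1}dc$ to rewrite $\nabla$-invariance of $(\tilde\phi,\phi)$ as the gauge identity
\[
c^{-1}\,dc \;=\; \frac{\eta - \phi^{*}\eta}{h} \qquad \text{in } \Omega^{1}_{\alpha}\otimes \Lie(\hat \bW).
\]
For part (2) one sets $\phi = \Id$, giving $dc = 0$; writing $c = 1+\sum_{i\geq 1} c_{i} h^{-i}$ with $c_{i}\in A_{0}\otimes R$, the conditions $dc_{i} = 0$ in $\Omega^{1}_{A_{0}\otimes R/R}$ force each $c_{i}\in R$, since every nontrivial monomial $x^{I}y^{J}$ in $A_{0}$ has at least one partial derivative with coefficient in $\{1,\dots,p-1\}$. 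Thus $c$ is a constant, i.e.\ $c \in \hat \bW$.

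For part (3), expand $c = 1 + c_{1}h^{-1} + c_{2}h^{-2} + \cdots$ and extract the coefficient of $h^{-1}$ in the gauge identity: the left side contributes $dc_{1}$ (higher $c_{i}$'s and their products only feed into coefficients of $h^{-j}$ with $j \geq 2$), while the right side contributes $\eta - \phi^{*}\eta$. Hence $\eta - \phi^{*}\eta = dc_{1}$ is exact, which by definition places $\phi$ in $G_{0}$.

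The main obstacle is setting up the gauge identity cleanly --- in particular, keeping straight how the two trivializations $t$ of $\widetilde{\alpha}$ and $\phi^{*}t$ of $\phi^{*}\widetilde{\alpha}$ differ by $c$, and getting the correct sign. Once that is done, all three claims fall out by coefficient-matching in powers of $h^{-1}$ together with the explicit structure of $A_{0}$.
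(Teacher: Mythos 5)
Your argument is correct and follows essentially the same route as the paper: you use the explicit section $t$ (with $\nabla(t) = \eta/h$) to encode a lift of $\phi\in S_\alpha$ by a cocycle $c\in\Mor(\alpha,\hat\bW)$, observe that $\nabla$-invariance becomes the logarithmic-derivative identity $c^{-1}dc = \pm(\eta-\phi^*\eta)/h$, and read off the three claims by matching $h^{-1}$-coefficients (the paper packages this last step via the group homomorphism $\hat\bW\to\bG_a$, $1+\sum a_ih^{-i}\mapsto a_1$, but the content is identical). The one place where you add something beyond what the paper records explicitly is the justification that $df=0$ in $\Omega^1_{A_0\otimes R/R}$ forces $f$ constant, which is a correct elementary fact about $A_0$.
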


\begin{proof}
The map $\Mor(\alpha, \hat \bW) \rightarrow S_{\widetilde{\alpha}}$ takes $f$ to the translation by $f \circ \pi$, and exactness is immediate because $\widetilde \alpha$ is a trivial torsor. If the translation by $f \circ \pi$ preserves the connection, then $df = 0$. This implies that $f$ is constant, that  is $f \in \hat \bW$. 
	
	To see that $\im(S_{\widetilde{\alpha}}^{\nabla}) \subset G_0$ pick $\gamma \in S_{\widetilde{\alpha}}^{\nabla}$ and let $\bar \gamma$ be its image in $S_{\alpha}$. We have that
	\begin{equation}\label{eq23}
	\bar{\gamma}^*\nabla(t) = \nabla(t_{\gamma}) = \nabla(t) + c^{-1}dc \in \Omega_{\alpha}^1 \otimes \Lie(\hat \bW)
	\end{equation}
	for some  $c= (1 + \Sigma_{i>0} a_i h^{-i})\in  \hat \bW(\alpha)$.
	Consider the group homomorphism $\hat \bW \rightarrow \mathbb{G}_a$ that takes a series 
	$(1 + \Sigma a_i h^{-i}) \in \hat \bW(\alpha)$ to $a_1$. This defines a morphism $\Omega_{\alpha}^1 \otimes \Lie(\hat \bW) \rightarrow \Omega_{\alpha}^1$, and the image of $\nabla(t) \in \Omega_{\alpha}^1 \otimes \Lie(\hat \bW)$ is precisely $\eta$. Hence, we have from  (\ref{eq23})
	$$\bar{\gamma}^*\eta =  \eta + da_1 $$
	as desired.
		
\end{proof}

Recall from Lemma \ref{sectionlemma} that the extension $\nG_0 \to G_0$ admits a unique splitting $G^0_0 \mono \nG_0$ over  $G^0_0$.
The left action of $\nG_0$ on   $\nG_0/G^0_0 \iso \widetilde{\alpha}$ defines a homomorphism
\begin{equation}\label{eq11}
\nG_0 \to S_{\widetilde{\alpha}}^{\hat \bW}.
\end{equation} 
\begin{Th}\label{widetildeG_0}
Let $\nabla$ be a  $\Sp_{2n} \ltimes \alpha$-invariant connection on $\tilde{\alpha}$.  Then homomorphism (\ref{eq11}) induces an isomorphism $\nG_0 \iso S_{\widetilde{\alpha}} ^\nabla $.
	
\end{Th}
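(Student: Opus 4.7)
The plan is to prove Theorem~\ref{widetildeG_0} by exhibiting both $\nG_0$ and $S_{\widetilde{\alpha}}^\nabla$ as central extensions of $G_0$ by $\hat{\bW}$ and applying the five lemma to the morphism (\ref{eq11}) of extensions. First I would verify that (\ref{eq11}) is injective: since $\hat{\bW}$ is central in $\nG_0$, the left action on $\nG_0/G_0^0$ commutes with the right $\hat{\bW}$-action, so (\ref{eq11}) is well-defined into $S_{\widetilde{\alpha}}^{\hat{\bW}}$. Any $g$ in the kernel projects to an element of $G_0$ acting trivially on $\alpha = G_0/G_0^0 = \Spec A_0$, but the natural map $G_0 \to S_\alpha$ coincides with the defining inclusion $G_0 \subset \underline{\Aut}(A_0)$, which is faithful, so the projection of $g$ is trivial and $g \in \hat{\bW}$; finally the left translation by a non-trivial element of $\hat{\bW}$ is non-trivial, forcing $g = e$.

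The central step will be to check that (\ref{eq11}) lands in $S_{\widetilde{\alpha}}^\nabla$, i.e., that every element of $\nG_0$ preserves $\nabla$. I would use the scheme-theoretic decomposition $\widetilde{\alpha} \cdot G_0^0 \iso \nG_0$, induced from $\alpha \times G_0^0 \iso G_0$ of \S\ref{propg0} combined with the canonical splitting over $G_0^0$ from Lemma~\ref{sectionlemma}, to treat the two factors separately. For $\widetilde{\alpha}$ acting on itself by left translation, invariance of $\nabla$ is immediate from Lemma~\ref{l1} together with the trivial observation that the central $\hat{\bW}$-translations preserve any $\hat{\bW}$-connection. For $G_0^0$ acting by conjugation on $\widetilde{\alpha} = \nG_0/G_0^0$, the subgroup $\Sp_{2n}$ preserves $\nabla$ by the defining property (Lemma~\ref{l1}); for the remaining unipotent part $F^1 G_0^0 = \ker(G_0^0 \epi \Sp_{2n})$ I would combine the explicit formula $\nabla(t) = \eta/h$ from (\ref{explicitformula}) with the identity $\bar{g}^* \eta = \eta + d f_g$ valid for every $g \in G_0^0$ (with $f_g \in m^3$ when $g \in F^1 G_0^0$) to show that the unique lift of $g$ to $\nG_0$ (characterized at the Lie-algebra level by Lemma~\ref{splittings}) translates the distinguished section $t$ of $\widetilde{\alpha} \to \alpha$ by the scalar $\exp(f_g/h) \in \bW(\alpha)$, so that $\nabla$ of the transformed section becomes $\bar{g}^*(\eta/h)$ as required.

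Once well-definedness is established, both $\nG_0$ and $S_{\widetilde{\alpha}}^\nabla$ sit in short exact sequences with kernel $\hat{\bW}$ and quotient $G_0$: for the first this is the definition of $\nG_0$, and for the second it follows from Lemma~\ref{l2} together with the fact that the image of $\nG_0 \to G_0$, which is surjective, is contained in the image of $S_{\widetilde{\alpha}}^\nabla \to S_\alpha$. The induced map on kernels is the identity, since $\hat{\bW} \subset \widetilde{\alpha} \subset \nG_0$ acts precisely as the constant translations that form the kernel of $S_{\widetilde{\alpha}}^\nabla \to S_\alpha$. The five lemma then yields the desired isomorphism.

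The main obstacle will be the verification that the unipotent part $F^1 G_0^0$ preserves $\nabla$. In characteristic $p$ one cannot simply deduce this from the triviality of the induced Lie-algebra action of $F^1 G_0^0$ on $V = \Lie(\alpha)$, since Lie-algebraic triviality does not formally imply triviality of the group action; the explicit scalar computation sketched above, carefully tracking how the canonical splitting from Lemma~\ref{sectionlemma} interacts with the distinguished section $t$ and with the primitive $f_g$, appears unavoidable.
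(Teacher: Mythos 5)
Your proof has the right overall shape — both you and the paper conclude via Lemma \ref{l2} and a kernel comparison once $\nG_0$ is shown to land in $S_{\widetilde{\alpha}}^\nabla$ — but the crucial middle step, the $G_0^0$-invariance of $\nabla$, has a genuine gap, and the paper's route around it is quite different from what you sketch.

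You propose to show directly that every element of the unipotent radical $F^1G_0^0$ preserves $\nabla$, by computing that the unique lift $\hat g$ of $g\in F^1G_0^0$ translates the section $t$ by a scalar with logarithmic derivative $df_g/h$. But this is precisely the assertion that $\hat g$ preserves $\nabla$, restated; you haven't given a reason it holds. Lemma \ref{splittings} pins down the splitting $G_0^0\hookrightarrow \nG_0$ at the Lie-algebra level, but as you yourself point out, first-order (Lie-algebra) agreement does not propagate to the whole group in characteristic $p$ — and there is no reason the translation factor $c_g\in\hat\bW(\alpha)$ should be anything as explicit as a restricted exponential of $f_g/h$ (which is not even obviously well-defined for general $f_g\in m^3$, since the nilpotency degree of $f_g$ can exceed $(p+1)/2$). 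So the step you flag at the end as ``unavoidable but appears to work'' is exactly where the argument is missing.

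The paper avoids this by a different strategy that you would need to adopt or replace. Rather than treating all of $F^1G_0^0$, it passes to the commutator subgroup $H=[G_0,G_0]$ and uses Lemma \ref{H}: $H$ is generated by $\alpha$, $\Sp(2n)$, and a \emph{single} one-parameter subgroup $\lambda:\bG_a\to G_0^0$ with Hamiltonian $-x_1^3$ (or $-x_1^2y_1$ when $p=3$). For this concrete $\bG_a$ the paper writes down the lift $\tilde\lambda=e^{\tau x_1^3/h}$ via Lemma \ref{1-subgroups} (where the restricted exponential \emph{is} defined, because $(x_1^3)^{(p+1)/2}=0$), and verifies the translation-factor identity (\ref{x_i}) by an explicit closed-form calculation. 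The key final step, which is entirely absent from your sketch and is what makes the reduction to $H$ pay off, is this: since $\alpha,\Sp(2n)\subset H$, the $H$-invariant connection is \emph{unique}; since $H$ is normal in $G_0$, the quotient $G_0/H$ acts on the one-element set of $H$-invariant connections, hence trivially; therefore $\nabla$ is automatically $G_0$-invariant. This is the mechanism that upgrades invariance under a few generators to invariance under all of $G_0$, and it replaces the direct (and problematic) treatment of $F^1G_0^0$ that your proposal relies on. Without this normality/uniqueness argument, or a genuinely carried-out computation for all of $F^1G_0^0$, the proof is incomplete.
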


\begin{proof}
	Let us show that $\nG_0$ is a subsheaf of $S_{\widetilde{\alpha}}^{\nabla}$, that  is, $\nabla$ is $G_0$-invariant. 
	Denote by $H$ the commutator subgroup of $G_0$. We shall first show that $\nabla$ is $H$-invariant.

	 By Lemma \ref{H} the group $H$ is generated by $\alpha$, $\Sp(2n)$,  and a certain one-parameter subgroup $\lambda (\tau): \bG_a \to G^0_0$. By assumption, $\nabla$ is $\Sp_{2n} \ltimes \alpha$-invariant. It remains to check
	 that $\nabla$ is $\bG_a$-invariant.  We use formula  (\ref{explicitformula}) describing $\nabla $ in coordinates  corresponding to the trivialization $t$ of the torsor $\tilde \alpha$.  
	  First, assume that $p>3$. Homomorphism $\lambda$ has a unique lifting $\tilde \lambda $ to  $\widetilde G^{0,e}_0$ that can be explicitly computed using 
	the construction from Lemma  \ref{1-subgroups} 
	 $$\tilde \lambda  =  e^{\frac{\tau x_1^3}{h}}:  \bG_a \to \widetilde G^{0,e}_0.$$  
	The invariance of $\nabla$ under the action of $\bG_a$ reads as
\begin{equation}\label{invariancegamma}
\gamma^*\nabla(t) = \nabla(t'),
\end{equation}
where $t': \bG_a \times \alpha \to \widetilde \alpha $ is the composition 
$$ \bG_a \times \alpha  \rar{ \Id \times \gamma} \bG_a \times \alpha \rar{\Id \times t} \bG_a \times \widetilde \alpha \rar{\Id \times \tilde \gamma ^{-1}}  \bG_a \times \widetilde \alpha \rar{\proj_{\widetilde \alpha} } \widetilde \alpha .$$
We have to compute $t'$. The following equality of morphisms $\bG_a \times \alpha \to \nG_0$  holds.
%\begin{equation}\label{x_i}
\begin{equation}\label{x_i}
e^{\tau\frac{x_1^3}{h}} \Pi e^{\frac{\epsilon_ix_i}{h}}e^{\frac{\delta_iy_i}{h}} = e^{-\tau\frac{2\delta_1^3}{h}} (e^{\frac{(\epsilon_1+3\tau\delta_1^2)x_1}{h}}   e^{\frac{\delta_1y_1}{h}} \ldots e^{\frac{\epsilon_nx_n}{h}}e^{\frac{\delta_ny_n}{h}})e^{\frac{\tau((x_1 + \delta_1)^3 -3\delta_1^2x_1 - \delta_1^3)}{h}},
\end{equation}
We claim that the last factor 
$e^{\frac{\tau((x_1+ \delta_1)^3 -3\delta_1^2x_1 - \delta_1^3)}{h}} $ maps   $\bG_a \times \alpha $   to $ G_0^0\subset \nG_0$.
\[
\begin{tikzcd}
G_0^0  \arrow[d, "Id"] \arrow[r, ""]  &  \widetilde G^{0,e}_0 \arrow[d, ""]  \\
G_0^0   \arrow[r, "Id"]  & G_0^0
\end{tikzcd}
\]
Indeed, the same formula  defines an extension of the morphism  $\bG_a \times \alpha \to \nG_0$   to a morphism  from a  {\it reduced} scheme $\bG_a \times \Spec k[[\epsilon_i, \delta_j]]$ to $\nG_0$\footnote{Indeed,
the $p$-th power of $(x_1 + \delta_1)^3 -3\delta_1^2x_1 - \delta_1^3 = x_1^3 +3 x_1^2 \delta_1 \in A_h[[\delta_1]]$ is zero.}. The composition 
of the latter with the projection $\nG_0 \to G_0$ lands in $G^0_0 \subset G_0$. But the projection $ \widetilde G^{0,e}_0 \to G^0_0 $  induces an isomorphism on points with values in any reduced $k$-algebra.
Thus, the morphism $ e^{\frac{\tau((x_1 + \delta_1)^3 -3\delta_1^2x_1 - \delta_1^3)}{h}}: \bG_a \times \Spec k[[\epsilon_i, \delta_j]] \to \nG_0$ factors through $ G_0^0$ and the claim follows.
\[
\gamma^* \nabla(t) = \gamma^* \frac{\eta}{h} = \frac{\eta}{h} + \frac{\delta_1d(3\tau\delta_1^2)}{h} =  \frac{\eta}{h} + \frac{2\tau d\delta_1^3}{h},
\]
\[
\nabla(t') = \nabla(t) + e^{-\tau\frac{2\delta_1^3}{h}}de^{\tau\frac{2\delta_1^3}{h}} = \frac{\eta}{h} + \frac{2\tau d\delta_1^3}{h}.
\]

For $p = 3$, the lift $\tilde \gamma $ is given by   $e^{\tau\frac{x_1^2y_1}{h}}$. Write
\begin{equation}\label{char3}
e^{\tau\frac{x_1^2y_1}{h}} \Pi e^{\frac{\epsilon_ix_i}{h}}e^{\frac{\delta_iy_i}{h}} = f(\tau, \epsilon_1, \delta_1) (e^{\frac{(\epsilon_1+\tau\delta_1\epsilon_1)x_1}{h}}e^{\frac{(\delta_1+\tau\delta_1^2)y_1}{h}} e^{\frac{\epsilon_2x_2}{h}}e^{\frac{\delta_2y_2}{h}} 
\ldots e^{\frac{\epsilon_nx_n}{h}}e^{\frac{\delta_ny_n}{h}})s
\end{equation}
for some uniquely determined $s \in G_0^0(\bG_a \times \alpha) \subset \widetilde G^{0,e}_0(\bG_a \times \alpha)$ and $f(\tau, \epsilon_1, \delta_1) \in \hat \bW(\bG_a \times \alpha)$.
We claim that 
\begin{equation}\label{char3bis}
f(\tau, \epsilon_1, \delta_1) = e^{\tau\frac{\delta_1^2\epsilon_1}{h}}. 
\end{equation}

A direct verification of this formula is unpleasant; instead 
we deduce it from the following facts. Using a computation in Lie algebras from Lemma \ref{splittings} one verifies (\ref{char3bis})  modulo $\tau^2$. 

Also, it is easy to see that the left hand side is invariant under the action of the multiplicative group
given by $\tau \to \frac{\tau}{a}, \epsilon_1 \to \frac{\epsilon_1}{a}, \delta_1\to a\delta_1, x_1 \to ax_1, y_1 \to \frac{y}{a}$. 
Thus,  the element $f(\tau, \epsilon_1, \delta_1)$   must be also invariant under this transformation.  Finally, $f(\tau, \epsilon_1, \delta_1) $ satisfies the following cocycle condition:
\begin{equation}
 f(\tau_1 + \tau_2, \epsilon_1, \delta_1) = 
 f(\tau_1, \epsilon_1, \delta_1) f (\tau_2, \epsilon_1+ 
 \tau_1 \delta_1\epsilon_1, \delta_1 + \tau_1\delta_1^2).
 \end{equation}
 There exists a unique $f$ satisfying the above properties and it is given by (\ref{char3bis}).
 It follows that $\tilde \gamma$ carries the section $t$ to $t'= e^{\tau\frac{2\delta_i^2\epsilon_i}{h}} t$ and (\ref{invariancegamma}) follows.
  
We have proved that $\nabla$ is $H$-invariant. Note that since $\Sp_{2n} \subset H$ and $\alpha \subset H$ we can see that $\nabla$ is a unique $H$-invariant connection. 

The group scheme $G_0$ acts on $\underline{\conn}(\widetilde{\alpha}, \hat \bW)$, the subgroup $H$ is normal in $G_0$, hence $G_0/H$ acts on the space of $H$-invariant connections. But since the latter consists of one element this action must by trivial. Hence $\nabla$ is $G_0$-invariant.

It follows that  homomorphism (\ref{eq11}) factors through  $S_{\widetilde{\alpha}} ^\nabla  $. Thus, by Lemma \ref{l2} we have a commutative diagram
\[
\begin{tikzcd}
\nG_0  \arrow[d, ""] \arrow[r, "\beta"]  &  S_{\widetilde{\alpha}} ^\nabla   \arrow[d, ""]  \\
G_0  \arrow[r, "Id"]  & G_0
\end{tikzcd}
\]
where  $\beta$ induces an isomorphism on the kernels of the vertical arrows. Hence $\beta$ is an isomorphism as desired.  

\end{proof}

\subsection{Proof of the Basic Lemma.} We will prove the assertion for the extensions by $\hat \bW$ (as opposed to $\Gra_{\bG_m}$) which is a priori stronger than the one stated in \S \ref{planofproof.intro}.

Recall the setup.
 Let $i: V\mono V^{\flat}$ be a morphism of symplectic vector spaces such that the restriction to $V$ of the symplectic form on $V^{\flat}$ is $\omega_V$. Let $\nG_0 \to G_0$ and $\nfG_0 \to G_0^{\flat}$ be the corresponding extensions. Denote by $G_0^{\sharp}\subset G_0^\flat $ the group subscheme that consists of automorphisms preserving  kernel of the homomorphism $i^*: A_0^{\flat} \to A_0$. 
 We have a natural homomorphism $G_0^{\sharp} \to G_0$.
 \begin{Th}[Basic Lemma]\label{lifting} 
 The homomorphism $G_0^{\sharp} \to G_0$ lifts uniquely to a homomorphism of central extensions
 	$$ \nfG_0  \times _{G_0^\flat}  G_0^{\sharp} \to \nG_0.$$
\end{Th}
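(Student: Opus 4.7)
The plan is to use the geometric characterization of $\nG_0$ provided by Theorem~\ref{widetildeG_0} and to exhibit the lift as a restriction of the natural action of $\nfG_0$ on the Heisenberg-type torsor $\widetilde{\alpha}^{\flat}$. Concretely, by Theorem~\ref{widetildeG_0} both $\nG_0$ and $\nfG_0$ are realized as the groups of automorphisms of the $\hat\bW$-torsors $\widetilde\alpha \to \alpha$ and $\widetilde{\alpha}^{\flat}\to\alpha^{\flat}$ preserving their unique $\Sp\ltimes\alpha$- (resp.\ $\Sp\ltimes\alpha^{\flat}$-)invariant connections $\nabla$ and $\nabla^{\flat}$. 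So it will be enough to produce a canonical embedding of torsors $\widetilde{\iota}\colon\widetilde\alpha\mono\widetilde{\alpha}^{\flat}$ over $\iota\colon\alpha\mono\alpha^{\flat}$ respecting the connections, and then observe that the subgroup $\nfG_0\times_{G_0^{\flat}}G_0^{\sharp}$ preserves $\widetilde\iota$ by definition of $G_0^{\sharp}$, so it acts on $\widetilde\alpha$ preserving $\nabla$.

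First I would construct $\widetilde\iota$. By Corollary~\ref{extensions}, both extensions $\widetilde\alpha\to\alpha$ and $\widetilde{\alpha}^{\flat}|_{\alpha}\to\alpha$ are classified by $\Lie(\hat\bW)$-valued skew-symmetric forms on $\Lie(\alpha)$, and in both cases the form is (by the explicit computation with the lifts $e^{\epsilon f/h}$ in \S\ref{proof2}) equal to $h^{-1}\omega_V$ viewed through the isomorphism $V\iso\Lie(\alpha)$. Since the symplectic form on $V^{\flat}$ restricts to $\omega_V$ on $V$, the extensions match, and since the classifying groupoid is discrete (Remark~\ref{mor}) this yields a unique morphism $\widetilde\iota\colon\widetilde\alpha\mono\widetilde{\alpha}^{\flat}$ of $\hat\bW$-extensions over $\iota$.

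Next I would check that $\widetilde\iota$ is compatible with the connections, i.e.\ $\widetilde\iota^{*}\nabla^{\flat}=\nabla$. The connection $\nabla$ was characterized in Lemma~\ref{l1} by the property that the associated section $f\colon\Lie(\widetilde\alpha)\to\Lie(\hat\bW)$ is the unique linear splitting compatible with the restricted $p$-power operation; the analogous statement holds for $\nabla^{\flat}$. Because the embedding $\widetilde\iota$ was produced restricted-Lie-algebraically, it transports the splitting for $\nabla^{\flat}$ to the splitting for $\nabla$, giving the required compatibility. (Equivalently, one can just check this on the explicit formula $\nabla(t)=\eta/h$ from \eqref{explicitformula}, since a suitable symplectic basis for $V$ can be extended to one for $V^{\flat}$.)

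Having $\widetilde\iota$, the construction of the lift is immediate: an element of $\nfG_0\times_{G_0^{\flat}}G_0^{\sharp}$, viewed as an automorphism of $\widetilde{\alpha}^{\flat}$ preserving $\nabla^{\flat}$ whose image in $G_0^{\flat}$ preserves the kernel $\ker(i^{*}\colon A_0^{\flat}\to A_0)$, preserves the closed subscheme $\widetilde\alpha\subset\widetilde{\alpha}^{\flat}$ and hence restricts to an automorphism of the $\hat\bW$-torsor $\widetilde\alpha$ preserving $\nabla$, i.e.\ an element of $\nG_0$ by Theorem~\ref{widetildeG_0}. For uniqueness, any two liftings differ by a homomorphism $\chi\colon G_0^{\sharp}\to\hat\bW$ (into the kernel of $\nG_0\to G_0$). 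The main obstacle I expect here is to show such a $\chi$ must be trivial; I would do this by noting that $G_0^{\sharp}$ surjects onto $G_0$, that $\hat\bW$ is unipotent and torsion-free of height $\infty$ while $\Hom(\alpha,\hat\bW)=0$ (Corollary~\ref{hom}) and $G_0^{0}$ is connected with $\Hom(G_0^{0},\hat\bW)=0$ (its abelianization is $\bG_a$, which still has no nontrivial map to $\hat\bW$ that is compatible with the normalization forced by Lemma~\ref{splittings} on Lie algebras), so $\chi$ vanishes on a set of generators of $G_0^{\sharp}$.
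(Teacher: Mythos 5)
Your proof is correct and takes essentially the same route as the paper: both construct the lift $\widetilde\iota\colon\widetilde\alpha\to\widetilde\alpha^{\flat}$ via Corollary~\ref{extensions}, verify compatibility of the invariant connections, and then restrict automorphisms through the identification $\nG_0\iso S_{\widetilde\alpha}^{\nabla}$ from Theorem~\ref{widetildeG_0}. The only difference is cosmetic: for uniqueness the paper invokes Corollary~\ref{hom} directly on the affine group scheme $G_0^{\sharp}$, whereas you pass through a generator-by-generator check that is correct but more elaborate than needed.
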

 \begin{proof} The uniqueness follows from Corollary \ref{hom}. Let us prove the existence. By Corollary \ref{extensions} the morphism $i: \alpha \rightarrow \alpha^{\flat}$ lifts uniquely to a morphism $\tilde i: \widetilde{\alpha} \rightarrow \widetilde{\alpha}^{\flat}$ of extensions. Moreover,
 the pullback of the (unique) $\Sp (V^\flat) \ltimes \alpha ^\flat$-invariant connection $\nabla^\flat $ on $\widetilde{\alpha}^{\flat}$ is the (unique) $\Sp (V) \ltimes \alpha $-invariant connection $\nabla$  on $\widetilde{\alpha}$.
 Thus, we have a homomorphism 
 $$ S_{\widetilde{\alpha}^\flat} ^{\nabla ^\flat}  \times _{G_0^\flat}  G_0^{\sharp} \to S_{\widetilde{\alpha}} ^\nabla $$
 lifting $G_0^{\sharp} \to G_0$. It remains to apply
 Theorem \ref{widetildeG_0}.
 \end{proof}
 \begin{cor}\label{refofbasicl} There exists a unique isomorphism of central extensions of $G_0^{\sharp}$:
 $$ \nfG_0  \times _{G_0^\flat}  G_0^{\sharp} \iso \nG_0 \times _{G_0} G_0^{\sharp}.$$
 \end{cor}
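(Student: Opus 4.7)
The plan is to package the conclusion of Theorem \ref{lifting} as a morphism of central extensions of $G_0^{\sharp}$ by $\hat \bW$, and then upgrade it to an isomorphism via a five-lemma argument; uniqueness will be inherited directly from the uniqueness clause in the Basic Lemma.

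I would first observe that both sides of the claimed isomorphism are naturally central extensions of $G_0^{\sharp}$ by $\hat \bW$. Indeed, the left-hand side is the pullback of the central extension $1 \to \hat \bW \to \nfG_0 \to G_0^\flat \to 1$ along the inclusion $G_0^\sharp \mono G_0^\flat$, while the right-hand side is the pullback of $1 \to \hat \bW \to \nG_0 \to G_0 \to 1$ along the natural homomorphism $G_0^\sharp \to G_0$. Applying Theorem \ref{lifting} produces a homomorphism $\nfG_0 \times_{G_0^\flat} G_0^{\sharp} \to \nG_0$ lifting $G_0^\sharp \to G_0$ and inducing the identity on $\hat \bW$; combining it with the tautological projection onto $G_0^\sharp$, I would obtain a morphism
$$\Phi \colon \nfG_0 \times_{G_0^\flat} G_0^{\sharp} \to \nG_0 \times_{G_0} G_0^{\sharp}$$
of central extensions of $G_0^\sharp$ by $\hat \bW$. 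Since $\Phi$ induces the identity on both the kernel $\hat \bW$ and the quotient $G_0^\sharp$, the short five-lemma (which applies to central extensions in any category of sheaves of groups) forces $\Phi$ to be an isomorphism.

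For uniqueness, any isomorphism of the claimed form, composed with the projection $\nG_0 \times_{G_0} G_0^\sharp \to \nG_0$, would be a lift of $G_0^\sharp \to G_0$ along $\nG_0 \to G_0$ that is identical on $\hat \bW$, and such a lift is unique by the uniqueness clause of Theorem \ref{lifting}. There is no serious obstacle to overcome: the corollary is a mere repackaging of the Basic Lemma, its only added content being the elementary observation that a morphism of central extensions inducing identities on kernel and quotient is automatically invertible.
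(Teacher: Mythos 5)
Your argument is correct and coincides with what the paper leaves implicit (the paper states the Corollary with no proof, immediately after Theorem~\ref{lifting}): the Basic Lemma supplies the lift, pairing it with the tautological projection to $G_0^\sharp$ gives a morphism of central extensions, the short five lemma (valid for sheaves of groups) upgrades it to an isomorphism, and uniqueness follows by composing any isomorphism with the first projection and invoking the uniqueness clause of the Basic Lemma.
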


\subsection{Proof of the Proposition \ref{keyprop}.}
We will prove a stronger assertion for the extensions by $\hat \bW$ (as opposed to $\Gra_{\bG_m}$).
Let $(V, \omega)$ be a symplectic vector space and let $\eta$ be a homogeneous $1$-form on the scheme $\bf V$ whose differential equals $\omega$, that is  a vector $\eta\in V^*\otimes V^*$ whose 
skew-symmetrization is $\omega$. Denote by
$$i: V \mono V^\flat := V \oplus V^*$$
the linear morphism corresponding to the graph $\Gamma_\eta: \bf V \mono \bT^*_ {\bf V}$ of $\eta$. Explicitly, the composition of $i$ with the first projection is $\Id$ and its composition $V\to V^* $ 
with the second projection is given by $\eta\in  V^*\otimes V^*$.
In \S \ref{subsectionsext} we defined a homomorphism $\psi_0: G_0 \to G_0^\flat$. We have to prove that $\psi_0$ lifts uniquely to a homomorphism $\tilde \psi_0: \nG_0 \to \nfG_0$ of extensions.
The uniqueness   follows from Corollary \ref{hom}. To prove the existence we observe that by construction of $\psi_0$ it factors through the subgroup $G_0^{\sharp} \subset  G_0^\flat $  that
consists of automorphisms preserving  kernel of the homomorphism $i^*: A_0^{\flat} \to A_0$ and its composition
$$G_0 \rar{\psi_0 }G_0^{\sharp} \rar{} G_0$$
with restriction morphism is the identity\footnote{Homomorphism $\psi_0$ can be described in a coordinate-free way  as follows. Consider the subgroup $G_0^{\sharp, f}$ of $G_0^{\sharp} $ that consists of scheme-theoretic automorphisms $g$ of $\alpha^\flat$ fitting in the commutative diagram

 $$
\def\normalbaselines{\baselineskip20pt
	\lineskip3pt  \lineskiplimit3pt}
\def\mapright#1{\smash{
		\mathop{\to}\limits^{#1}}}
\def\mapdown#1{\Big\downarrow\rlap
	{$\vcenter{\hbox{$\scriptstyle#1$}}$}}
\begin{matrix}
\alpha^{\flat}  &   \rar{g}    & \alpha^{\flat} \cr
 \mapdown{\pi}  &  &\mapdown{ \pi}   \cr
\alpha   & \rar{\bar{g} }    & \alpha,
\end{matrix}
$$
for some $\bar{g} \in G_0$.
 The restriction of the projection $G_0^{\sharp} \to G_0$ to $G_0^{\sharp, f}$ is an isomorphism and $\psi_0$ is its inverse.}.
 Consider the homomorphism $$\nG_0 \rar{(\Id, \psi_0)} \nG_0 \times _{G_0} G_0^{\sharp} .$$ Using Corollary \ref{refofbasicl} we get a morphism 
 $$\nG_0 \times _{G_0} G_0^{\sharp}  \iso  \nfG_0  \times _{G_0^\flat}  G_0^{\sharp} \rar{\proj} \nfG_0.$$
Its composition with  $(\Id, \psi_0)$ is the desired lift $\tilde \psi_0: \nG_0 \to \nfG_0$.

\section{$\bG_m$-equivariant quantizations}\label{ktconjecture}
In this section we consider quantizations of symplectic varieties $(X, \omega)$ equipped with an action of the multiplicative group $\bG_m$  such that the form $\omega$ has a positive weight  $m$ with respect to this action and $m$ is invertible in $k$.
We recall the notion of a $\bG_m$-equivariant Frobenius constant quantization $O_h$  of such $(X, \omega)$. By definition,  $O_h$ is a $\bG_m$-equivariant sheaf of  $\cO_{X'[h]}$-algebras on $X' \times \Spec k[h]$. In particular, specializing $h=1$ we have a sheaf  $O_{h=1}$ of $\cO_{X'}$-algebras over $X'$. We show that if the action of $\bG_m$ on $X$ is contracting  then   $O_{h=1}$ is an Azumaya algebra
over $X'$ and using Theorem \ref{main} compute its class in the Brauer group $\Br(X')$ proving a conjecture of  Kubrak and Travkin \cite{kt}.

\subsection{Definition of $\bG_m$-equivariant quantizations.}
Let $X$ be a smooth variety over $k$ equipped with a symplectic $2$-form $\omega$
and a $\bG_m$-action 
\begin{equation}\label{multgraction}
\lambda: \bG_m \times X \to X.
\end{equation}
We shall say that $\omega$ is of weight  $m$ with respect to the $\bG_m$-action 
if the following identity holds 
in $\Gamma(\bG_m \times X, \Omega^2_{ \bG_m \times X/\bG_m})$
\begin{equation}\label{multgractionomega}
\lambda^* \omega  =  z^m \proj_X^* \omega . 
\end{equation}
Here $z$ denotes the coordinate   on $\bG_m$ and $\proj_X: \bG_m \times X \to X$ the projection.
For the duration  of this section we shall assume that $\omega$ is of weight $m$ with 
$m$ invertible in $k$. 

The $\bG_m$-action  on $X$ defines a homomorphism from the Lie algebra of $\bG_m$ to the Lie algebra of vector fields on $X$. Denote by $\theta$ the image of the generator of $\Lie \bG_m$. 
Formula (\ref{multgractionomega}) together with the identity $d\omega=0$  imply
that 
$$d \iota_{\theta} \omega = m \omega. $$
Hence, setting $\eta = \frac{1}{m} \iota_{\theta} \omega $, defines a restricted Poisson structure on $X$. 
Endow  $X'[h]\colon= X' \times \Spec k[h]$ with the $\bG_m$-action given by the composition $$\bG_m \times X'  \rar{F\times \Id} \bG_m \times X'  \rar{\lambda} X'$$ 
(where $F: \bG_m \to \bG_m$ is given by $F^*(z) =z^p$) on the first factor and by $h\mapsto z^m h$ on the second factor.

A $\bG_m$-equivariant Frobenius-constant quantization  of $X$ consists of a $\bG_m$-equivariant sheaf $O_h$ of associative $\cO_{X'[h]}$-algebras on $X'[h]$ together with an isomorphism of $\bG_m$-equivariant 
$\cO_{X'}$-algebras
\begin{equation}\label{gmmodh}
O_h/(h) \iso \cO_X
\end{equation}
such that
$O_h$ is locally free as an $\cO_{X'[h]}$-module and the restriction $\cO_h:= \lim O_h/(h^n)$ 
of $O_h$ to the formal completion of $X'[h]$ along the divisor $h=0$ (equipped with the central homomorphism $s:  \cO_{X'}[[h]] \to \cO_h$ and  (\ref{gmmodh})) is a Frobenius-constant quantization  of  $X$
compatible with the restricted Poisson structure given by the $1$-form  $\eta = \frac{1}{m} \iota_{\theta} \omega $. 

For example, if $X$ is affine, then a $\bG_m$-equivariant Frobenius-constant quantization  of $X$ is determined by a graded $\cO(X')[h]$-algebra $O_h(X'[h])$ (with $\deg h=m$) together with  $O_h(X'[h])/(h)\iso \cO(X)$.
%Also, recall that by a result of Sumihiro $X$ can be covered by affine open subsets preserved by the $\bG_m$-action. 

\subsection{$\bA^1$-action.} Below we shall consider $\bG_m$-actions on a scheme $X$ satisfying the following property: morphism  (\ref{multgractionomega}) extends to a morphism 
\begin{equation}\label{multgractionext}
\tilde \lambda: \bA^1 \times X \to X.
\end{equation}
If $X$ is reduced and separated, which we shall assume to be the case for rest of this section,  then $\tilde \lambda $ defines an action of the monoid $\bA^1$ on $X$. 
In particular, the restriction of $\tilde \lambda$ to the closed subscheme $X \mono \bA^1 \times X$ given by the equation $z=0$ factors through the subscheme $X^{\bG_m} \mono X$ of fixed points: 
$$\tilde \lambda_0: X \to X^\bG_m \mono X.$$
Moreover, $\tilde \lambda $ exhibits $X^{\bG_m}$ as a $\bA^1$-homotopy retract of $X$. 

Also note if $X$  is a proper scheme with a nontrivial action of $\bG_m$ then  $\tilde \lambda$ does not exist. This  can be seen by looking at  the closure of a $1$-dimensional $\bG_m$-orbit in $X$.

\subsection{Main result.}
By definition a $\bG_m$-equivariant Frobenius-constant quantization  $O_h$ gives rise to  a  Frobenius-constant quantization  $\cO_h$ and thus a class $\rho(O_h)\in H^1_{et}(X',  \cO_{X'}^*/\cO_{X'}^{* p} )$ (see \S \ref{bktheorem}). Denote by $[\rho(O_h)] \in \Br (X')$ the image of $\rho(O_h)$ under the homomorphism $H^1_{et}(X',  \cO_{X'}^*/\cO_{X'}^{* p} )\to H^2_{et}(X',  \cO_{X'}^*)$.

Recall from (\cite{ov}, \S 4.2) a homomorphism 
\begin{equation}\label{milne}
\Omega^1(X')  \to \Br (X'), \quad \eta \mapsto [\eta]
\end{equation}
that carries a $1$-form $\eta$ to the class of the Azumaya algebra $D_X$ restricted to the graph of the embedding $\Gamma_\eta: X' \to \bT^*_{X'}$ given by $\eta$.  
Finally, denote by $O_{h=1}$  the sheaf of $\cO_{X'}$-algebras  $O_h/(h-1)$. The following result has been conjectured in \cite[\S 0.3, Question 2]{kt}.

\begin{Th} Let $X$ be a smooth variety over $k$ equipped equipped with a $\bG_m$-action (\ref{multgractionomega}) and a  symplectic form $\omega$ of weight $m>0$  relatively prime to the characteristic of $k$.
	Assume that the morphism  (\ref{multgraction}) extends to a morphism (\ref{multgractionext}).
	Then, for every $\bG_m$-equivariant Frobenius-constant quantization $O_h$ of  $X$, the restriction of $O_h$ to 
	$X'[h, h^{-1}] = X' \times \Spec k[h, h^{-1}] \subset X'[h]$
	is an Azumaya algebra. Moreover, the following equality in $\Br(X')$ holds
	$$[O_{h=1}] = [\frac{1}{m} \iota_{\theta} \omega]  + \tilde \lambda_0^*[\rho(O_h)] .$$
\end{Th}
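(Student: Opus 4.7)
The plan is to promote the Brauer identity of Theorem \ref{main} from the formal completion at $h=0$ to a $\bG_m$-equivariant global identity on $X'[h]$ and specialize at $h=1$, using the $\bA^1$-retraction $\tilde \lambda$ as a Brauer-theoretic homotopy from the fibre $\{h=0\}$ to the fibre $\{h=1\}$.

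First, I would verify that $O_h|_{X'[h, h^{-1}]}$ is an Azumaya algebra. The formal Azumaya property of $\cO_h(h^{-1})$ established in \S \ref{SBKconstruction}, together with $\bG_m$-equivariance and the $\bA^1$-extension of the action, spreads the property to all of $X'[h, h^{-1}]$: after an $m$-th root cover, every point $(x, h_0)$ with $h_0 \ne 0$ lies in the $\bG_m$-orbit of a point arbitrarily close to $\{h=0\}$, where the formal Azumaya property applies.

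Next, I would globalize the construction of Theorem \ref{main}. The key observation is that because $\bG_m$ acts on $h$ with positive weight $m$, a $\bG_m$-equivariant coherent sheaf on the formal scheme $X'[[h]]$ is the same datum as a graded coherent sheaf on $X'[h]$, so passing to the formal completion loses no information. All inputs of the construction in \S \ref{Sreduction} --- the torsor $\cM_{X, \cO_h, s}$, the pro-group scheme $G$ together with its invariant lattice $\Lambda$, the identification with matrix algebras, and the graph-restriction algebra $\cD_{X, [\eta], h}$ with $\eta = \frac{1}{m}\iota_\theta \omega$ --- carry natural compatible $\bG_m$-actions, so that the formal Azumaya algebra $\cO^\sharp_h$ of Theorem \ref{main} extends uniquely to a $\bG_m$-equivariant global Azumaya algebra $O^\sharp_h$ on $X'[h]$ and the isomorphism of Theorem \ref{main}(i) globalizes to an isomorphism $(O_h \otimes_{\cO_{X'}[h]} D_{X, \eta, h}^{op})(h^{-1}) \iso O^\sharp_h(h^{-1})$ on $X'[h, h^{-1}]$, where $D_{X, \eta, h}$ is the global Rees/graph algebra from \S \ref{intro.centralred}. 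Equating Brauer classes and restricting to $\{h=1\}$ gives
\[
[O_{h=1}] = [D_{X, \eta, h=1}] + [O^\sharp_{h=1}]
\]
in $\Br(X')$, and $[D_{X, \eta, h=1}]$ is the Brauer class of $D_X/I_{\Gamma_{\eta}}D_X$, which by definition of the map (\ref{milne}) equals $[\eta] = [\frac{1}{m} \iota_\theta \omega]$.

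To identify $[O^\sharp_{h=1}]$, I would use the Azumaya algebra $\tilde \lambda^* O^\sharp_h$ on $\bA^1 \times X'[h]$, where $\tilde \lambda \colon \bA^1 \times X'[h] \to X'[h]$ is given by $(z, (x, h)) \mapsto (\tilde \lambda_{z^p}(x), z^m h)$. By $\bG_m$-equivariance its restriction over $\bG_m \times X'[h]$ is independent of $z \in \bG_m$; by $\bA^1$-homotopy invariance of the Brauer group (applied on $\bA^1 \times X'$ after restricting to $\{h=1\}$) the restriction at $\{z=0\}$ has the same Brauer class as the restriction at $\{z=1\}$. Specializing at $h=1$ then yields $[O^\sharp_{h=1}] = \tilde \lambda_0^*[O^\sharp_h|_{h=0}]$, and by Theorem \ref{main}(ii) $[O^\sharp_h|_{h=0}] = [\rho(O_h)]$ in $\Br(X')$, whence the claimed formula. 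The main obstacle will be the rigorous execution of the globalization step, in particular verifying that the $G$-invariant lattice $\Lambda$ produced by Proposition \ref{keyprop} can be chosen $\bG_m$-equivariantly; equivalently, that the extension $\widehat G$ of $G$ by $L^+ \bG_m$ carries a $\bG_m$-action compatible with the given one on $G$.
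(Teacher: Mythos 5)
Your overall strategy matches the paper's: globalize $\cO^\sharp_h$ across $X'[h]$, then use the $\bA^1$-contraction to transport Brauer classes from the special fibre $h=0$ (controlled by Theorem \ref{main}(ii)) to the fibre $h=1$. The Azumaya-property argument and the final homotopy step are both close to the paper's, which (instead of invoking $\bA^1$-homotopy invariance of $\Br$) derives $\tilde\lambda(h)^*[O^\sharp_h]=\proj^*[O^\sharp_h]$ from $\bG_m$-equivariance on $\bG_m\times X'[h,h^{-1}]$ plus injectivity of the restriction $\Br(\bA^1\times X'[h])\to\Br(\bG_m\times X'[h,h^{-1}])$, then restricts to $z=0$ and $h=1$; either variant works.

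Where you and the paper part ways is the construction of the global $O^\sharp_h$, and this is precisely where your proposal has a real gap. You want to upgrade the entire formal-geometry machine of \S\ref{Sreduction} (the torsor, the group $G$, the invariant lattice $\Lambda$ inside the $p^{3n}$-dimensional $k((h))$-vector space, and the resulting $\cO^\sharp_h$) to carry compatible $\bG_m$-actions, and then use the graded-sheaf/formal-completion equivalence to extend. You yourself flag the bottleneck: there is no reason the $G$-invariant lattice $\Lambda$ of Proposition \ref{keyprop} can be chosen $\bG_m$-equivariantly, and this is genuinely delicate because $\bG_m$ does not act $k((h))$-linearly (it rescales $h$), so one cannot simply average over the reductive group $\bG_m$. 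As stated, the proposal does not close this gap. The paper sidesteps it entirely: by Theorem \ref{main}(i) and the Beauville--Laszlo theorem one can glue $(O_h\otimes_{\cO_{X'}[h]} D_{X,[\eta],h}^{op})(h^{-1})$ over $X'[h,h^{-1}]$ with the formal Azumaya algebra $\cO^\sharp_h$ over the completion along $h=0$ to produce an Azumaya algebra $O^\sharp_h$ on $X'[h]$, \emph{without ever claiming $O^\sharp_h$ itself is $\bG_m$-equivariant}; the $\bG_m$-equivariance of the restriction over $X'[h,h^{-1}]$ alone, combined with injectivity of restriction for Brauer groups, is enough to propagate the Brauer identity from $h=0$ to $h=1$. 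If you want to pursue your version, you need either to prove equivariance of the lattice (nontrivial, because of the twisted $h$-action) or to switch to the Beauville--Laszlo gluing, which makes the question moot.
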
 
\begin{proof}
	To prove the Azumaya property of $O_h(h^{-1})$ consider the morphism
	$$ \psi: O_h \otimes_{\cO_{X'[h]}} O_h^{op} \to \End_ {\cO_{X'[h]}}(O_h).$$
	This is a morphism of vector bundles over $X'[h]$ of the same rank. We have to prove that $\psi$ is an isomorphism away from the divisor $h=0$. Denote by 
	$$\det \psi: \wedge^{top} (O_h \otimes_{\cO_{X'[h]}} O_h^{op}) \to \wedge^{top}(\End_ {\cO_{X'[h]}}(O_h))$$
	the determinant of $\psi$ and by $Z\mono X'[h]$.  Let $(x_0, h_0) \in Z$ be a $k$-point of $Z$. We shall check that $h_0=0$. 
	Using (\ref{multgractionext})  and the positivity of $m$ the $\bG_m$-action on $X'[h]$ extends to a morphism 
	\begin{equation}\label{multgractionexth}
	\tilde \lambda(h) : \bA^1 \times X'[h] \to X'[h].
	\end{equation}
	It follows that the closure $T$ of the $\bG_m$-orbit of $(x_0, h_0)$ intersects the divisor $h=0$ at some point $(x_0', 0)$. Since $Z$ is closed and $\bG_m$-invariant we have that $T\subset Z$ {\it i.e.}, 
	$\det \psi$ is identically $0$ on $T$. On the other hand, using the Azumaya property of the formal quantization $\cO_h(h^{-1})$ we see that the restriction of $\psi$  to the formal punctured neighborhood
	of $(x_0', 0)\in T$ is an isomorphism. This contradiction proves the first assertion of the Theorem.

	For the second one, 
	consider algebra  $D_{X, h}$  obtained from the filtered algebra of differential $D_{Y}$ operators via the Rees construction (see \S 
	\ref{intro.dif}). The $p$-curvature homomorphism makes $D_{X, h}$ into an algebra over $S^\cdot T_{X'}[h]$.
	The graph $\Gamma_{\eta}: X' \rightarrow \mathbb{T}_{X'}^*$ of the differential form $\eta= \frac{1}{m} \iota_{\theta} \omega$ defines a sheaf of ideals $I_{\Gamma_{\eta}} \subset S^{\cdot}T_{X'}$.
	The quotient $D_{X, [\eta], h}=D_{X, h} /I_{\Gamma_{\eta}} $ can be viewed as a $\bG_m$-equivariant sheaf of $\cO_{X'[h]}$-algebras over $X'[h]$. By construction, the restriction of $D_{X, [\eta], h}$ 
	to the formal completion of $X'[h]$ along the divisor $h=0$ is isomorphic to the algebra $\cD_{X, [\eta], h}$ constructed in \S \ref{intro.centralred}.
	Now given a $\bG_m$-equivariant Frobenius-constant quantization $O_h$ 
	we consider the tensor product  $O_h \otimes_{\cO_{X'}[h]} D_{X, [\eta], h}^{op}$. Using Theorem \ref{main} and  the Beauville-Laszlo theorem (\cite{bela}) there exists a sheaf $O^\sharp_h$ of  $\cO_{X'[h]}$-algebras over $X'[h]$
	whose restriction to $X'[h, h^{-1}]$ is $(O_h \otimes_{\cO_{X'}[h]} D_{X, [\eta], h}^{op})(h^{-1})$ and whose restriction  to the formal completion of $X'[h]$ along the divisor $h=0$ is an Azumaya algebra. 
	It follows that $O^\sharp_h$ is an Azumaya algebra over $X'[h]$.
	We claim that the following equality holds in $\Br(\bA^1 \times X'[h])$
	\begin{equation}\label{keyequiv}
	\tilde \lambda(h)^*([O^\sharp_h]) = \proj ^*_{X'[h]}([O^\sharp_h]) .
	\end{equation}
	Indeed, since $(O_h \otimes_{\cO_{X'}[h]} D_{X, [\eta], h}^{op})(h^{-1})$ is $\bG_m$-equivariant  the equality holds after the restriction to  $\bG_m \times X'[h, h^{-1}]$. Now the claim follows from the injectivity of the restriction morphism  $\Br(\bA^1 \times X'[h]) \to \Br(\bG_m \times X'[h, h^{-1}])$.  Restricting the classes in (\ref{keyequiv}) to the divisor $ X'[h] \rar{z=0}  \bA^1 \times X'[h]$ we find that  
	\begin{equation}\label{keyequivbis}
	\tilde \lambda(h)^*_0([O^\sharp_h]) = [O^\sharp_h].
	\end{equation}
	Morphism $\tilde \lambda(h)^*_0: X'[h] \to X'[h]$ factors as follows 
	$$X'[h] \rar{\proj_{X'}} X' \rar{\tilde \lambda(h)^*_0} X'  \rar{h=0} X'[h] .$$
	By Theorem \ref{main} the restriction of $[O^\sharp_h]$ to the divisor  $X'  \rar{h=0} X'[h] $ is equal to $[\rho(O_h)]$. Using (\ref{keyequivbis}) and restricting to the divisor $h=1$ we find that 
	$$[O^\sharp_{h=1}]= [\rho(O_h)]$$
	as desired.

\end{proof}

\section{Appendix}\label{appendix}
\subsection{The affine grassmannian for $\bG_m$.}
Let $G$ be an algebraic group over a field $k$. Denote by 
$$LG: \text{Aff}_k^{op} \to \text{Groups}$$ 
the corresponding loop group, that is a sheaf of groups on the category of affine schemes over $k$ equipped with the {\it fpqc} topology sending $k$-scheme $\Spec R$ to 
$G(R((h)))$. Also, let 
$$L^+G: \text{Aff}_k^{op} \to \text{Groups}$$ 
be the sheaf of groups  sending $k$-scheme $\Spec R$ to 
$G(R[[h]])$. It is known ({\it e.g.} see  \cite[Proposition 1.3.2]{zhu} ) that $L^+G$ is represented by a  group scheme over $k$ and that 
$LG$ is an ind-affine scheme.  Denote by $\Gra _G$ the affine grassmannian for $G$. By definition, $\Gra _G$ is the {\it fpqc}  sheaf associated to the presheaf $R \mapsto LG(R)/L^+G(R)$.

Recall the structure of the affine grassmannian for $\bG_m$. The following result is well-known (see, for example, \cite{cc}); for the reader's convenience we include its proof.
\begin{lm}\label{decomp}
For a commutative ring $R$ such that $\Spec R$ is connected there is a decomposition
$$R((h))^* = R^* \times \bW(R) \times \mathbb{Z} \times \hat \bW (R),$$
where $\bW(R)$ is the subgroup of $R[[h]]^*$ formed by formal power series with constant term $1$,
$\hat \bW (R)$  is the group of polynomials  of the form $ 1 + \Sigma a_i h^{-i}$  with nilpotent coefficients $a_i \in R$. In addition, we have that
  $$\hat \bW (R) = \ker(R[h^{-1}]^* \rar{f\mapsto f(0)} R^*).$$
\end{lm}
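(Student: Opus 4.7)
The proof divides naturally into three tasks: identifying $\hat\bW(R)$ with the stated kernel description, and proving the natural product map $\Psi\colon R^*\times\bW(R)\times\bZ\times\hat\bW(R)\to R((h))^*$ is injective and surjective. Both injectivity and surjectivity will rely on a well-defined valuation $n\colon R((h))^*\to\bZ$, which I plan to construct first. Writing $f=\sum a_i h^i$ and $f^{-1}=\sum b_j h^j$, the identity $\sum_i a_i b_{-i}=1$ combined with $v_\mathfrak{p}(f)+v_\mathfrak{p}(f^{-1})=0$ at each residue field will show that $\{\mathfrak{p}:v_\mathfrak{p}(f)=n\}$ coincides with the principal open $D(a_n b_{-n})\subset\Spec R$: if $v_\mathfrak{p}(f)=n$ then $a_n,b_{-n}\notin\mathfrak{p}$, while on $D(a_n b_{-n})$ the inequalities $v_\mathfrak{p}(f)\le n$ and $v_\mathfrak{p}(f^{-1})\le -n$ combine to force equality. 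Since these level sets are open and partition $\Spec R$, connectedness selects a unique value $n(f)\in\bZ$.

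For the description of $\hat\bW(R)$, the inclusion $\{1+\sum a_ih^{-i}:a_i\text{ nilpotent}\}\subseteq\ker(R[h^{-1}]^*\to R^*)$ is clear, since such a polynomial has the form $1+(\text{nilpotent of }R[h^{-1}])$. For the reverse, I would reduce modulo each prime $\mathfrak{p}$: in the polynomial domain $(R/\mathfrak{p})[h^{-1}]$ the units are scalars, forcing every non-constant coefficient into $\mathfrak{p}$ for every $\mathfrak{p}$, hence into the nilradical. Injectivity of $\Psi$ then follows easily: commutativity of $R((h))$ makes $\Psi$ a group homomorphism, and a kernel element $(r,w,n,\hat w)$ reduces modulo $\mathrm{Nil}(R)$ to kill $\hat w$; matching valuations yields $n=0$, evaluating at $h=0$ gives $\bar r=\bar w=1$, and the residual identity $rw=\hat w^{-1}$ places both sides in $R[[h]]\cap R[h^{-1}]=R$, where $R^*\cap\hat\bW(R)=\{1\}$ concludes.

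Surjectivity will be the hard part. Given $f\in R((h))^*$, I plan to reduce to the case $\bar g=1\in R_\mathrm{red}((h))^*$ by first multiplying by $h^{-n(f)}$ (making $\bar f$ a unit in $R_\mathrm{red}[[h]]$ with unit constant term), then dividing by a coefficient-wise lift $v=r_0w_0\in R^*\cdot\bW(R)\subset R[[h]]^*$ of $\bar f$. The remainder $g:=f/v$ has $\bar g=1$, so every coefficient of $g-1$ is nilpotent and only finitely many negative-power terms appear. Letting $J\subset R$ be the ideal generated by these finitely many negative-power coefficients, the crucial observation is that $J$ is finitely generated by nilpotents and hence itself nilpotent; say $J^N=0$. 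I plan to extract an $\hat\bW(R)$-factor iteratively: decompose $g-1=\nu_{\ge 0}+\nu_-$, factor $g=(1+\nu_{\ge 0})(1+\mu^{(1)})$ with $\mu^{(1)}:=(1+\nu_{\ge 0})^{-1}\nu_-$, and then inductively set
\[
1+\mu^{(k)}=(1+\mu^{(k)}_-)(1+\mu^{(k+1)}),\qquad \mu^{(k+1)}:=(1+\mu^{(k)}_-)^{-1}\mu^{(k)}_{\ge 0}.
\]
The key claim I will prove by induction on $k$ is that the negative-power coefficients of $\mu^{(k)}$ lie in $J^k$ while its positive-power coefficients stay in $J$: the gain of one power of $J$ at each step comes from the vanishing of the diagonal term in the convolution (namely, $c_0 d_n=d_n=0$ for $n<0$, since $\mu^{(k)}_{\ge 0}$ has no negative-power coefficients), leaving only off-diagonal contributions $c_i d_j$ with $c_i\in J^k$ and $d_j\in J$. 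Since $J^N=0$, the iteration terminates with $\mu^{(N)}\in R[[h]]$, yielding the finite factorization $g=(1+\nu_{\ge 0})\cdot\prod_{k<N}(1+\mu^{(k)}_-)\cdot(1+\mu^{(N)})$, in which the $(1+\mu^{(k)}_-)$'s lie in $\hat\bW(R)$ and the remaining factors multiply to an element of $R[[h]]^*=R^*\cdot\bW(R)$. The main obstacle is verifying this $J$-depth gain at each iteration, which rests on the diagonal-term cancellation together with the nilpotence of $J$.
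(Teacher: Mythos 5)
Your proof is correct, and it takes a genuinely different (and more explicit) route than the paper's. The paper establishes a single structural fact after reducing modulo the nilradical: in a reduced connected $R$, a Laurent series is a unit if and only if its lowest nonzero coefficient is a unit; the proof is a direct algebraic manipulation of the convolution identities ($a_{-N}^{i+1}b_{-M+i}=0$, then multiplying the $h^0$-equation by $a_{-N}^{N+M}$) culminating in a coprimality/CRT argument that contradicts connectedness. The decomposition is then asserted to follow. You instead build a $\Spec R$-valued valuation via residue fields (the clopen-level-set argument $D(a_nb_{-n})$ is a clean way to get the integer $n$ from connectedness, and plays the role of the paper's $i_0$), identify $\hat\bW$ by reduction to $(R/\mathfrak{p})[h^{-1}]$, and then — where the paper waves its hands — you prove surjectivity by the iterated factorization $g=(1+\nu_{\geq 0})\prod(1+\mu^{(k)}_-)(1+\mu^{(N)})$, tracking $J$-adic depth where $J$ is the finitely generated (hence nilpotent) ideal of negative-power coefficients. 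This iteration is the real content your proof supplies that the paper omits; the paper's terse "The claim follows from the fact that…" hides exactly the termination/nilpotence bookkeeping you make explicit. One small wording fix: in your induction you should say the \emph{nonnegative}-power coefficients of $\mu^{(k)}$ lie in $J$ (the constant term must be included, since the depth gain in $c\mu^{(k)}_{\geq 0}$ uses $(\mu^{(k)}_{\geq 0})_0\in J$ when $i=n$, $j=0$). With that adjustment everything closes up: nilpotence of $J$ forces $\mu^{(N)}_-=0$, and the leftover factors $(1+\nu_{\geq 0})$, $(1+\mu^{(N)})$ have constant term $1+(\text{nilpotent})\in R^*$, so they land in $R[[h]]^*=R^*\times\bW(R)$ as required.
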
{}

\begin{proof}
The claim follows from the fact that under the assumptions of the lemma
$$R((h))^{*}=\left\{\sum_{i} a_{i} h^{i} \in R((h)) ; \exists i_{0}: a_{i_{0}} \in R^{*}, a_{j} \text { nilpotent for all } j<i_{0}\right\}.$$

To show this replace $R$ by $R/ \mathfrak{N}_{R}$, where $\mathfrak{N}_{R}$ stands for the nilradical. We need to show that a Laurent polynomial is invertible if and only if its first nonzero coefficient is invertible in $R$. Suppose that 
\begin{equation}\label{thelene}
A(h)  B(h) = 1
\end{equation}
 for $A(h), B(h) \in R((h))$ such that 
\begin{equation}
    \begin{split}
        A(h) = a_{-N} h^{-N} + a_{-N+1} h^{-N+1} + ... \\
        B(h) = b_{-N} h^{-M} + b_{-M+1} h^{-M+1} + ...
    \end{split}{}
\end{equation}{}
where $b_{-M} \neq 0$ and $a_{-N} \neq 0$. From (\ref{thelene}) we have that $N+M \geq 0$. If $N+M=0$ then $a_{-N} b_{-M} = 1$ and we are done. Otherwise, we have from (\ref{thelene}) 
\begin{equation}\label{eq7}
a_{-N} b_{-M} = 0, \quad a_{-N}b_{-M+1} + a_{-N+1}b_{-M}=0,  \cdots 
\end{equation}
\begin{equation}\label{eq8}
a_{-N}b_{-M + (N+M)} + \ldots + a_{-N+(N+M)}b_{-M} = 1.
\end{equation}
Using (\ref{eq7}) we get $a_{-N}^2b_{-M+1}=0$ and similarly $a_{-N}^ib_{-M+i}=0$ for every $i \leq N+M$.  
Multiplying both sides of (\ref{eq8})  by $a_{-N}^{N+M}$ we infer 
$$a_{-N}^{N+M}(1-a_{-N}b_{-M+(N+M)}) = 0.$$
Since $a_{-N}$ and $1-a_{-N}b_{-M+(N+M)}$ are coprime:  $$R \iso R/(a_{-N}) \times R/(1-a_{-N}b_{-M+(N+M)}),$$
 $\Spec R$  is connected, and  $a_{-N} \neq 0$ we conclude that 
 $1-a_{-N}b_{-M+(N+M)} = 0$.  Hence $a_{-N}$ is invertible as desired.  
\end{proof}{}
Using the Lemma, we have decompositions  
$$ L\bG_m \iso \bG_m \times \bW \times \underline{\bZ} \times \hat \bW, $$
$$ \Gra_{\bG_m} \iso  \underline{\bZ} \times \hat \bW, $$
 where  $\bW$ is the group scheme of big Witt vectors  and $\hat \bW$ is a group ind-scheme whose group of $R$-points
 is defined in the Lemma. 
 
\subsection{Subgroups of $L\GL(n)$.}

\begin{pr}\label{subgroupsofLGL} 
Let $G$ be an affine group scheme over a field $k$,  and let $\phi: G \to LGL(n) $ be a homomorphism. Then there exists an element $g\in GL(n, k((h)))$ such that $\phi $  factors through $gL^+GL(n)g^{-1}$:
$$ G\rar{\phi} g(L^+GL(n)) g^{-1} \mono LGL(n).$$

\end{pr}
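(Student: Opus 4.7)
The plan is to produce a $G$-invariant $k[[h]]$-lattice $\Lambda \subset V := k((h))^n$, where $G$ acts on $V$ via $\phi$. Given such a $\Lambda$, any $g \in \GL(n, k((h)))$ with $g \cdot k[[h]]^n = \Lambda$ satisfies the conclusion, since the stabilizer of $\Lambda$ in $L\GL(n)$ is exactly $g L^+\GL(n) g^{-1}$.

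Set $A := \cO(G)$ and let $g_u \in G(A)$ be the universal element, giving the matrix $M := \phi(g_u) \in \GL(n, A((h)))$. The entries of $M$ and $M^{-1}$ are Laurent series with $A$-coefficients, hence of bounded pole order: choose $N \geq 0$ with $h^N M,\, h^N M^{-1} \in \Mat_n(A[[h]])$. The crucial feature, which uses the group-homomorphism property, is that the same bound $N$ controls every iterated product: for any $R$-algebra and any $R$-point $g \in G(R)$, the matrix $\phi(g)$ and its inverse lie in $h^{-N} \Mat_n(R[[h]])$. In particular, applying this to $g = g_u^j$ for every $j \in \bZ$ gives $M^j \in h^{-N} \Mat_n(A[[h]])$, so writing $L_k := h^{-k} k[[h]]^n$ we have $M^j L_0 \subset L_N \otimes A[[h]]$ \emph{uniformly} in $j$. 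Consequently the orbit $\phi(G) \cdot [L_0]$ in the affine Grassmannian $\Gr_{L\GL(n)}$ lies in the closed projective subscheme $Y_N$ of lattices between $L_{-N}$ and $L_N$, which via $\Lambda \mapsto \Lambda/L_{-N}$ embeds as a closed subscheme of the classical Grassmannian of $k[[h]]/h^{2N}$-submodules of the finite-dimensional $k$-vector space $W := L_N/L_{-N}$.

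Now construct $\Lambda$ by a saturation argument inside $W$. Let $\bar\Lambda \subset W$ be the smallest $k[[h]]/h^{2N}$-submodule such that $\bar\Lambda \otimes_k A$ contains the image under $\pi_W$ of $M^j L_0 \otimes A[[h]]$ for every $j \in \bZ$. Such a smallest submodule is well-defined because $W$ is finite-dimensional, $A$ is faithfully flat over the field $k$ (so intersections of $k$-subspaces of $W$ commute with $-\otimes_k A$), and the uniform bound confines all of the images to $W \otimes_k A$. Set $\Lambda := \pi_W^{-1}(\bar\Lambda) \subset L_N$, a $k[[h]]$-lattice of full rank containing $L_0$. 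By construction, $\Lambda \otimes_{k[[h]]} A[[h]]$ absorbs the entire $M$-orbit of $L_0 \otimes A[[h]]$. One then verifies that this, combined with the uniform bound on pole order, forces $M(\Lambda \otimes A[[h]]) \subset \Lambda \otimes A[[h]]$ and $M^{-1}(\Lambda \otimes A[[h]]) \subset \Lambda \otimes A[[h]]$, so $\Lambda$ is $\phi(g_u)$-invariant. Functorial $G$-invariance $\phi(g)(\Lambda \otimes_{k[[h]]} R[[h]]) = \Lambda \otimes R[[h]]$ at every $R$-point $g$ is then inherited by base-changing the universal identity along $A \to R$.

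The main technical obstacle is verifying the $M$-invariance of $\Lambda$ itself: while $\Lambda \otimes A[[h]]$ contains the $M$-invariant submodule $\sum_j M^j L_0 \otimes A[[h]]$, the lattice $\Lambda$ is built by enlarging $L_0$ all the way up to the preimage of $\bar\Lambda$, and a priori this enlargement could absorb elements whose $M$-images escape $\Lambda \otimes A[[h]]$ (since $M$ need not preserve $L_N$). Controlling this requires using the uniform bound together with the structure of the preimage $\pi_W^{-1}$: because every iterate $M^j L_0$ lives in $L_N \otimes A[[h]]$, the relevant saturation stays inside the finite-dimensional $W$, and the $M$-invariance of $\sum_j M^j L_0 \otimes A[[h]]$ propagates to $\Lambda \otimes A[[h]]$ through the minimality of $\bar\Lambda$ and the flatness of $A[[h]]$ over $k[[h]]$ that ensures preimage and base change commute.
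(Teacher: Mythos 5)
Your uniform pole-bound observation---that the group-homomorphism property forces $\phi(g) \in h^{-N}\Mat_n(R[[h]])$ for one fixed $N$ and all $R$-points $g$---is correct, and a version of it also appears in the paper's argument (to squeeze its lattice between $h^N V[[h]]$ and $V[[h]]$). But the remainder of your construction has a genuine gap, which you flag in your final paragraph and then do not close. The saturated lattice $\Lambda = \pi_W^{-1}(\bar\Lambda)$ has no reason to be $M$-stable. The $A[[h]]$-module $\sum_j M^j(L_0 \otimes A[[h]])$ is $M$-stable and bounded, but $\Lambda \otimes_{k[[h]]} A[[h]]$ is in general strictly larger: the ``descent to $k$'' step enlarges it, and $M$ applied to the extra elements---which can have poles all the way up to order $N$---can escape $L_N \otimes A[[h]]$ entirely, since the pole bound only gives $M L_N \subset L_{2N}$. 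Minimality of $\bar\Lambda$ gives one inclusion (the orbit lands inside $\Lambda \otimes A$) but says nothing that would push $M$ back through $\pi_W^{-1}$; there is no density or irreducibility argument available to force equality. Moreover, restricting attention to the powers $g_u^j$ of the universal point is a red herring: $G$ is not algebraically generated by iterates of $g_u$, and the structural input you need is not ``$M^j$ bounded'' but the full Hopf-algebraic cocycle identity $M \otimes M = \Delta(M)$, which you never invoke beyond the bound on pole order. Attempts to repair the construction by iterated saturation ($\Lambda_{m+1}$ absorbing $M\Lambda_m$, etc.) run into the same obstruction: without the cocycle identity, the pole order of $M\Lambda_m$ can grow by $N$ at each step and never stabilize inside any fixed $L_{N'}$.

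The paper's proof takes an essentially opposite route. It defines $\Lambda$ as a \emph{preimage} rather than a saturation: $\Lambda = \{v \in V((h)) : A v \in V \otimes_k \cO(G)[[h]]\}$, where $A = M$ is your universal matrix. The pole bound immediately gives $h^N V[[h]] \subset \Lambda \subset V[[h]]$, so $\Lambda$ is a lattice, with no bookkeeping. Invariance is then almost formal from the identity $A \otimes A = \Delta(A)$: composing the two maps $V((h)) \to V((h)) \otimes (\cO(G) \otimes_k \cO(G))[[h]]$ (one via $A$ twice, one via $A$ then $\Delta$) and using flatness of $\cO(G)[[h]]$ over $k[[h]]$, one identifies $\Lambda \otimes_{k[[h]]} \cO(G)[[h]]$ as exactly the preimage of the integral sublattice under the composite, which forces $A\Lambda \subset \Lambda \otimes \cO(G)[[h]]$. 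Conceptually, the paper takes the largest lattice inside $L_0$ whose image under the universal orbit stays integral (an intersection over the group action), while you attempt the smallest lattice containing $L_0$ that absorbs the orbit (a union); only the intersection interacts cleanly with the comultiplication, which is what makes the invariance argument close.
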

\begin{proof}
Set $V=k^n$.
We have to show that there exists a $\phi(G)$-invariant  $k[[h]]$-lattice $$\Lambda \subset V((h)).$$ 
Informally, our $\Lambda$ will be constructed starting with the lattice $\Lambda_0=V[[h]]$  as the intersection $\bigcap_g g \Lambda _0$. Since we make no assumptions on $k$ and $G$ one has give a meaning to the latter. 
We shall do it as follows.

The morphism $\phi$ is given by a matrix $A\in \GL(n, \cO(G)((h)))$ such that
\begin{equation}\label{eqprodmat}
A\otimes A = \Delta (A) \in  \GL(n,  (\cO(G)\otimes \cO(G))((h))),
\end{equation}
where $\Delta:  \cO(G) \to \cO(G)\otimes \cO(G)$ is the comultiplication on $\cO(G)$ given by the product morphism $G\times G \to G$ and such that
the image of $A$ under the evaluation at $1\in G(k)$ homomorphism $\GL(n, \cO(G)((h))) \to \GL(n,k((h)))$ is the identity matrix.

Set
$$\Lambda = \{ v\in   V((h)) \; \text{such\,  that}\;  Av\in V \otimes _k \cO(G)[[h]]\}.$$
Then $\Lambda$ is a $k[[h]]$-submodule of $V[[h]]$ contains $h^N V[[h]]$, for sufficiently large $N$. Hence, $\Lambda$ is a lattice. It remains to show that $\Lambda$ is $\phi(G)$-invariant, that is
$$A( \Lambda)  \subset \Lambda \otimes _{k[[h]]} \cO(G)[[h]].$$

The matrix $A$ defines  $\cO(G)((h))$-linear maps
\begin{equation}\label{eqprodmatcompbis}
\begin{split}
V((h)) \otimes_{k[[h]]}  \cO(G)[[h]] \rar{A\otimes \Id} (V((h)) \otimes_{k[[h]]}     \cO(G)[[h]]) \otimes_ {k[[h]]} \cO(G)[[h]] \to \\
\to V((h)) \otimes_{k[[h]]}     (\cO(G) \otimes_k \cO(G))[[h]],
\end{split}
\end{equation}
where the second map in (\ref{eqprodmatcompbis}) is induced by the embedding 
 \begin{equation}\label{eqcompltensor}
\cO(G)[[h]] \otimes_ {k[[h]]} \cO(G)[[h]]   \to           (\cO(G) \otimes_k \cO(G)   )[[h]]. 
  \end{equation}
  Since the cokernel of (\ref{eqcompltensor}) and 
$ \cO(G)[[h]]$  
are both flat $k[[h]]$-modules it follows that   $\Lambda \otimes _{k[[h]]} \cO(G)[[h]]$ is precisely the preimage  of  $V[[h]] \otimes_{k[[h]]}     (\cO(G) \otimes_k \cO(G))[[h]]$ under the composition
(\ref{eqprodmatcompbis}). 

Hence it suffices to check that  (\ref{eqprodmatcompbis}) carries $A( \Lambda) $ to  $V[[h]] \otimes_{k[[h]]}  (\cO(G) \otimes_k \cO(G))[[h]].$
But the composition 
\begin{equation}\label{eqprodmatcomp}
  V((h))  \rar{A}  V((h)) \otimes_{k[[h]]}  \cO(G)[[h]]  \rar{  (\ref{eqprodmatcompbis})       }    V((h)) \otimes_{k[[h]]}     (\cO(G) \otimes_k \cO(G))[[h]]   
 \end{equation} 
is equal to 
$$ V((h))  \rar{A}  V((h)) \otimes_{k[[h]]}  \cO(G)[[h]]  \rar{  Id \otimes \Delta  }    V((h)) \otimes_{k[[h]]}     (\cO(G) \otimes_k \cO(G))[[h]]   $$
by (\ref{eqprodmat}).   Hence it carries $\Lambda$ to  $V[[h]] \otimes_{k[[h]]}  (\cO(G) \otimes_k \cO(G))[[h]]$
 and we win.
\end{proof}
\begin{cor}\label{hom}
There are no nontrivial homomorphisms from  an affine group scheme to $L\bG_m/L^+\bG_m$. 
\end{cor}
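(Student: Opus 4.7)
The plan is to deduce the corollary directly from Proposition~\ref{subgroupsofLGL} by exhibiting a group-theoretic splitting of the quotient map $L\bG_m \to \Gra_{\bG_m}$. The crucial input is Lemma~\ref{decomp}, which provides a decomposition $L\bG_m \iso \bG_m \times \bW \times \underline{\bZ} \times \hat\bW$ as {\it fpqc} sheaves of abelian groups, compatible with the subgroup $L^+\bG_m = \bG_m \times \bW$ (the first two factors). Consequently $\Gra_{\bG_m} = \underline{\bZ} \times \hat\bW$ appears simultaneously as the quotient and as a direct factor of $L\bG_m$, and the inclusion $s \colon \Gra_{\bG_m} \mono L\bG_m$ of this factor is a group homomorphism splitting the projection.

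Given a homomorphism $\phi \colon G \to \Gra_{\bG_m}$ from an affine group scheme $G$, I would form the composite $\tilde\phi := s \circ \phi \colon G \to L\bG_m$ and apply Proposition~\ref{subgroupsofLGL} with $n=1$. It asserts that the image of $\tilde\phi$ is contained in some conjugate $g L^+\bG_m g^{-1}$; but $L\bG_m$ is abelian, so this conjugate is simply $L^+\bG_m$. Therefore $\tilde\phi(G)$ lies in $L^+\bG_m \cap s(\Gra_{\bG_m})$, which is trivial because the decomposition $L\bG_m = L^+\bG_m \times \Gra_{\bG_m}$ is a direct product of groups. Hence $\tilde\phi = 1$ and, by injectivity of $s$, also $\phi = 1$.

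There is no substantive obstacle to this argument. The only point meriting attention is that the splitting $s$ must be a group homomorphism, not merely a morphism of sheaves of sets; this is ensured by Lemma~\ref{decomp}, where the decomposition is stated at the level of the abelian group $R((h))^*$ with its natural subgroups. Once that is observed, the reduction to Proposition~\ref{subgroupsofLGL} and the triviality of the intersection are immediate.
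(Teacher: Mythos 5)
Your proof is correct and follows the route the paper intends: the decomposition in Lemma~\ref{decomp} exhibits $\Gra_{\bG_m}=\underline{\bZ}\times\hat\bW$ as a direct \emph{group-theoretic} factor of $L\bG_m$ complementary to $L^+\bG_m$, so a homomorphism $\phi\colon G\to\Gra_{\bG_m}$ lifts via this splitting to $\tilde\phi\colon G\to L\bG_m$, which Proposition~\ref{subgroupsofLGL} (with $n=1$, conjugation being trivial since $L\bG_m$ is abelian) forces to land in $L^+\bG_m$; the trivial intersection $L^+\bG_m\cap(\underline{\bZ}\times\hat\bW)=\{1\}$ then kills $\tilde\phi$ and hence $\phi$. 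You correctly identified the one point deserving care, namely that $s$ is a homomorphism of sheaves of groups and not merely a set-theoretic section, which indeed follows because the factors in Lemma~\ref{decomp} are subgroups of the abelian group $R((h))^*$.
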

\subsection{Subgroups of $L\PGL(n)$.}
\begin{rem}
The analogues assertion for $L\PGL(n)$ does not hold. 
\end{rem}
Consider the homomorphism of  loop groups {\it fpqc} sheaves
\begin{equation}\label{l.g.ext.}
L\GL(n) \to L\PGL(n)
\end{equation} 
induced by the projection $\GL(n) \to \PGL(n)$. We do not know if  (\ref{l.g.ext.})  is surjective as a morphism of  {\it fpqc} sheaves. However, we shall see below that (\ref{l.g.ext.})  is surjective  over any affine group subscheme of $L\PGL(n)$ of finite type over $k$. For our applications we need a bit more general statement. 

Recall that an affine group scheme $H$ over a perfect field $k$  is said to to be  pro-unipotent if there exists a filtration   $$\cdots \subset H^{\geq i} \subset \cdots  \subset H^{\geq 1}=H$$ 
by normal group subschemes such that 
$$H \iso \lim_{\longleftarrow} H/H^{\geq i}$$
and every quotient $H/H^{\geq i}$ is unipotent ({\it i.e.}, has a finite composition series with all quotient groups isomorphic to the additive group $\bG_a$).

\begin{pr}\label{apploopspgl} 
Let $G$ be an affine group scheme over a perfect field $k$,  and let $\phi: G \to L\PGL(n) $ be a homomorphism. Assume that $G$ has a normal  pro-unipotent group subscheme $G^{\geq 1}\subset G$ such that 
$\phi(G^{\geq 1})\subset L^+\PGL(n) $ and the quotient $G_0=G/G^{\geq 1}$ has finite type over $k$. Then the following assertions hold:
\begin{itemize}
\item[(i)]
The morphism of  {\it fpqc} sheaves  $\tilde G: = G \times _{L\PGL(n)} L\GL(n) \to G$ given by the projection to the first coordinate is surjective for the Zariski topology on $G$ (and, consequently, for the  {\it fpqc} topology).
\item[(ii)]
The following two conditions are equivalent.
\begin{itemize}
\item[(1)]
There exists  an element $g\in \PGL(n, k((h)))$ such that $\phi $  factors through $gL^+\PGL(n)g^{-1}$:
$$ G\rar{\phi} g(L^+PGL(n)) g^{-1} \mono LPGL(n).$$
\item[(2)]
The extension
\begin{equation}\label{fundextension}
1  \to L\bG_m \to \tilde G \to G \to 1
\end{equation}
admits a reduction  $\tilde G ^+$ to $L^+\bG_m$. 
   $$
\def\normalbaselines{\baselineskip20pt
\lineskip3pt  \lineskiplimit3pt}
\def\mapright#1{\smash{
\mathop{\to}\limits^{#1}}}
\def\mapdown#1{\Big\downarrow\rlap
{$\vcenter{\hbox{$\scriptstyle#1$}}$}}
\begin{matrix}
 1 &  \to   & L^+\bG_m  & \to  & \tilde G^+ & \to &  G & \to & 1\cr
&& \mapdown{}  &  &\mapdown{}  &&  \mapdown{\Id} && \cr
1 &  \to   &  L\bG_m   & \to  & \tilde G  & \to &   G & \to & 1
\end{matrix}
 $$
\end{itemize}
\item[(iii)]
Assume that $G_0$ is smooth and connected. Then extension (\ref{fundextension}) 
admits a unique reduction  $\tilde G ^+$ to $L^+\bG_m$. 
\end{itemize}
\end{pr}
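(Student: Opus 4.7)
My plan is to treat the three parts in sequence.

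For part (i), I work pointwise: given $\Spec R \to G$ with image $\bar g \in \PGL(n, R((h)))$, the obstruction to lifting $\bar g$ to an element of $\GL(n, R((h)))$ is the class of the $\bG_m$-torsor pulled back from $\GL(n) \to \PGL(n)$ along $\bar g$, which lives in $\Pic(\Spec R((h)))$. By Beauville--Laszlo gluing, any line bundle on $R((h))$ extends to one on $\Spec R[[h]]$, and Nakayama identifies $\Pic(R[[h]])$ with $\Pic(R)$; line bundles on $\Spec R$ trivialize Zariski-locally, yielding surjectivity of $\tilde G \to G$ in the Zariski topology.

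For part (ii), the implication (1)$\Rightarrow$(2) is an immediate pullback: conjugate $1 \to L^+\bG_m \to L^+\GL(n) \to L^+\PGL(n) \to 1$ by $g$ and pull back along $\phi$ to obtain $\tilde G^+$. For (2)$\Rightarrow$(1), I observe that $\tilde G^+$, being an extension of affine group schemes, is itself an affine group scheme, so the composition $\tilde G^+ \hookrightarrow \tilde G \to L\GL(n)$ satisfies the hypotheses of Proposition \ref{subgroupsofLGL}. This produces $g \in \GL(n, k((h)))$ carrying $\tilde G^+$ into $gL^+\GL(n)g^{-1}$; taking the quotient by $L^+\bG_m$ gives the desired factorization $G \to gL^+\PGL(n)g^{-1}$.

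For part (iii), uniqueness is immediate: two reductions differ by a group homomorphism $G \to L\bG_m / L^+\bG_m = \Gra_{\bG_m}$, which vanishes by Corollary \ref{hom}. For existence, the plan is a two-step argument. First, since $\phi(G^{\geq 1}) \subset L^+\PGL(n)$, the fiber product $G^{\geq 1} \times_{L^+\PGL(n)} L^+\GL(n)$ furnishes a canonical reduction of $\tilde G|_{G^{\geq 1}}$ to $L^+\bG_m$, unique because $G^{\geq 1}$ is pro-unipotent and $\Hom(G^{\geq 1}, \Gra_{\bG_m}) = 0$. Second, quotienting $\tilde G / L^+\bG_m$ by this reduction and by $G^{\geq 1}$ reduces the problem to splitting an extension $1 \to \Gra_{\bG_m} \to \tilde G_0 \to G_0 \to 1$ with $G_0$ smooth, connected, and of finite type.

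The main obstacle will be this final splitting. Using $\Gra_{\bG_m} \cong \underline{\bZ} \times \hat{\bW}$ from Lemma \ref{decomp}, I would treat the two factors separately. The $\underline{\bZ}$-extension splits because $G_0$ is connected: the projection $\tilde G_0^{\underline{\bZ}} \to G_0$ is \'etale with discrete kernel, so the identity component of the source projects isomorphically onto the connected $G_0$. For the $\hat{\bW}$-extension, I would combine the boundedness principle for loop groups (a morphism from a finite-type scheme to $L\PGL(n)$ lands in some bounded $L^{\leq N}\PGL(n)$) with the connectedness of $G_0$ to argue that the induced map $G_0 \to \Gra_{\PGL(n)}$ has image in a bounded projective subscheme, where a Borel-type fixed-point argument for the $G$-action produces the conjugating element $g$, yielding (1) of (ii) and hence (2).
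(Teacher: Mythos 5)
Your proof of part (ii) matches the paper exactly. Parts (i) and (iii), however, have genuine problems.

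For part (i), the step ``By Beauville--Laszlo gluing, any line bundle on $R((h))$ extends to one on $\Spec R[[h]]$'' is not a correct use of Beauville--Laszlo. Beauville--Laszlo is a gluing theorem: it reconstructs a module over $R$ from compatible data over $R[1/h]$ and the $h$-adic completion; it gives no mechanism for \emph{extending} a bundle from $R((h))$ to $R[[h]]$ (compare: a line bundle on $\bA^2 \setminus \{0\}$ extends to $\bA^2$ only because $\bA^2$ is regular of the right dimension --- this is a class-group fact, not a descent fact). The paper's argument is different: it first uses a scheme-theoretic section of $G \to G_0$ (available because $G^{\geq 1}$-torsors over affines are trivial) to reduce to the finite-type quotient $G_0 = \Spec R$, then passes to $R_{\mathrm{red}}$ (using that $R$ is finitely generated over a perfect field), and finally uses regularity of $R_{\mathrm{red}}$ to identify $\Pic(R_{\mathrm{red}}((h))) \cong \Class(R_{\mathrm{red}}((h))) \cong \Class(R_{\mathrm{red}}[[h]]) \cong \Pic(R_{\mathrm{red}})$. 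Your proposal, which starts from an arbitrary $\Spec R \to G$, also skips the reduction to the finite-type $G_0$; without it one cannot even assume $R$ is Noetherian, let alone that $R_{\mathrm{red}}$ is regular.

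For part (iii), the uniqueness argument via Corollary \ref{hom} is correct, but the existence argument is both different from the paper's and incomplete. The two-step reduction (split over $G^{\geq 1}$, then over $G_0$) is asserted but not justified --- passing from a $G$-equivariant splitting over a normal subgroup plus a splitting of the quotient extension back to a splitting of the whole extension requires an argument you don't supply. More seriously, the handling of the $\hat\bW$-factor is hand-waving: $\hat\bW$ is a group ind-scheme, not a linear algebraic group, so there is no ``Borel-type fixed-point argument'' available, and boundedness of the image in a Grassmannian does not by itself produce a group-theoretic splitting of a central extension. The paper's actual argument is concrete and avoids all of this: use part (i) to get local scheme-theoretic sections $s_i$ of $\tilde G \to G$, observe that the transition functions $\bar s_i \bar s_j^{-1}$ land in the $\underline\bZ$-factor of $\Gra_{\bG_m}$ because $G$ is reduced, glue using $\check H^1(G,\bZ)=0$ (irreducibility of $G$) to get a global scheme-theoretic section of $\bar G = \tilde G/L^+\bG_m \to G$, normalize it at the identity, and then conclude it is a group homomorphism because any scheme morphism from the connected reduced scheme $G\times G$ to $\Gra_{\bG_m}$ is constant. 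You would need to replace your final paragraph with something along these lines for the existence proof to go through.
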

\begin{proof} For part (i) observe that the morphism of schemes 
$$L^+\GL(n) \to L^+\PGL(n)$$
admits a section locally for the Zariski topology on $ L^+PGL(n)$. 
Also since every $G^{\geq 1}$-torsor over an affine scheme is trivial the projection
$$ G\epi G_0$$
admits a scheme-theoretic section $s: G_0 \to G$. Hence it suffices to check that the composition 
$G_0\rar{s} G\rar{\phi} L\PGL(n) $ lifts locally for the Zariski topology on $G_0$ to scheme-theoretic morphism $G_0 \to L\GL(n) $.
Set $G_0 = \Spec R$. Then $\phi \circ s $ defines a morphism 
\begin{equation}\label{section2}
\Spec R((h)) \to  \PGL(n).
\end{equation}
The pullback of the $\bG_m$-torsor $\GL(n) \to \PGL(n)$ defines a $\bG_m$-torsor $L$ over  $\Spec R((h))$. Observe that $\phi \circ s $ admits a lifting to  $ L\GL(n) $
if and only if $L$ is trivial. Thus to complete the proof of (i) we have to show that  there exists an affine open covering $\Spec  R= \cup U_i$ such that the pullback of $L$  to $ \Spec \cO(U_i)((h))$ is trivial for every $i$. 
We shall prove  a stronger assertion: the morphism $\Spec R((h)) \to \Spec R$ induces an isomorphism
  \begin{equation}\label{picard}
 \Pic (R) \iso \Pic (R((h))).
  \end{equation}
  Since $G_0$ is a group scheme and $k$ is perfect, the reduction $R_{red}$ is smooth over $k$.  Since $R$ is a finitely generated $k$-algebra, the kernel of the projection $R \to R_{red}$ is a nilpotent ideal. It follows that
   $( R((h))_{red} \iso R_{red}((h))$. Consequently,   we have that
   $$ \Pic (R)  \iso \Pic (R_{red}), \quad  \Pic ( R((h))) \iso  \Pic ( R_{red}((h))).$$
  Next, using regularity of $ R_{red}((h))$ we conclude that $$\Pic (R_{red}((h)))\cong \Class (R_{red}((h))) \cong  \Class (R_{red}[[h]])\cong \Pic (R_{red}[[h]])\cong \Pic (R_{red}).$$ 
  This proves part (i).
  
  For part (ii), let $\tilde G ^+$ be  a reduction of $\tilde G$ to $L^+\bG_m$. Since $L^+\bG_m$ is an affine group scheme (as opposed to merely a group ind-scheme) $\tilde G ^+$  is also an affine group scheme.
  Applying   Proposition \ref{subgroupsofLGL} we conclude that the homomorphism $\tilde G ^+ \to L\GL(n)$ factors through $gL^+\GL(n)g^{-1}$, for some $g\in \GL(n, k((h)))$. Hence $G \to L\PGL(n)$
factors through $gL^+\PGL(n)g^{-1}$. The inverse implication is clear.

  Finally, for part (iii), set  $\bar G:= \tilde G / L^+\bG_m$.  We have to show that the central extension 
  $$\Gra_{\mathbb{G}_m} \to \bar G  \to G$$
  admits a unique splitting. We shall first construct a scheme-theoretic section of the projection $\bar G  \to G$.
  Using  part (i) there exists an open cover $G= \cup U_i$ and sections $s_i: U_i \to \tilde G $ of the projection $\tilde G  \to G$. Let $\bar s_i:  U_i \to \bar G $ be the composition of $s_i$ with the quotient map $\tilde G \to \bar G$. Since $G$ is reduced the morphisms
  $$\bar s_i \bar s_j^{-1}: U_i \cap U_j \to \Gra_{\mathbb{G}_m} = \hat \bW \times \bZ$$ 
  lands at the second factor. Hence the collection $\{\bar s_i \bar s_j^{-1}\}$ defines a \v{C}ech $1$-cocycle for the constant sheaf $\bZ$ on $G$. Since $G$ is irreducible, we have that  $\check{H}^{1}(G, \mathbb{Z})=0$. 
  Thus,  we have a global scheme-theoretic section $\bar s: G \to \bar G$ of the projection $\bar G  \to G$. We claim that every such section satisfying  
   $\bar s(1) =1$
  is a group homomorphism. To see this it suffices to show that the diagram 

   $$
\def\normalbaselines{\baselineskip20pt
	\lineskip3pt  \lineskiplimit3pt}
\def\mapright#1{\smash{
		\mathop{\to}\limits^{#1}}}
\def\mapdown#1{\Big\downarrow\rlap
	{$\vcenter{\hbox{$\scriptstyle#1$}}$}}
\begin{matrix}
G \times G  &  \rar{m}   &  G  \cr
 \mapdown{\bar s \times \bar s}  &  &\mapdown{ \bar s}  \cr
 \bar G \times \bar G &  \rar{\bar m}   &  \bar G  
\end{matrix}
$$
is commutative. In turn, this follows from the fact that every scheme-theoretic morphism from a connected reduced scheme  to $\Gra_{\mathbb{G}_m}$ is constant.
\end{proof}

 \subsection{A representation of $\lsp(2n)$.} In this subsection we prove irreducibility of a certain representation of the Lie algebra $\lsp(2n)$
 that we  used in the proof of Lemma \ref{commutator}. We use notations from   \S \ref{propg0}. 
 
 \begin{lm}\label{irrrepsp}
	For every integer $l$ with $0\leq l < 2(p-1)$, the adjoint representation of the Lie algebra $\lsp(2n) = m^2/m^3$ on $m^l/m^{l+1}$ is irreducible.
\end{lm}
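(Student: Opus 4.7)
Let $W \subset m^l/m^{l+1}$ be a nonzero $\lsp(2n)$-submodule; the goal is to show $W = m^l/m^{l+1}$. For each tuple $(c_1, \ldots, c_n)$ of nonnegative integers with $\sum c_i = l$, write $V_{(c_i)} \subset m^l/m^{l+1}$ for the span of monomials $\prod x_i^{a_i} y_i^{b_i}$ with $a_i + b_i = c_i$, giving a direct sum decomposition $m^l/m^{l+1} = \bigoplus V_{(c_i)}$. The subalgebra $\bigoplus_i \mathfrak{sl}_{2,i} \subset \lsp(2n)$, where $\mathfrak{sl}_{2,i}$ is the Poisson triple $(x_i^2/2,\ -x_i y_i,\ -y_i^2/2)$, preserves each $V_{(c_i)}$ and acts on it as the external tensor product of the $\mathfrak{sl}_{2,i}$-modules $V^{(i)}_{c_i} = \{x_i^a y_i^b : a+b = c_i,\ 0 \leq a, b \leq p-1\}$. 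Since $V^{(i)}_{c_i}$ is the restricted simple $\mathfrak{sl}_2$-module of highest weight $\min(c_i,\ 2(p-1)-c_i)$, which is $\leq p-1$ because $c_i \leq l < 2(p-1)$, it is simple; hence each $V_{(c_i)}$ is irreducible over $\bigoplus_i \mathfrak{sl}_{2,i}$.

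Next I would verify that the $V_{(c_i)}$ are pairwise non-isomorphic as $\bigoplus_i \mathfrak{sl}_{2,i}$-modules: an isomorphism $V_{(c_i)} \cong V_{(c_i')}$ forces $c_i' \in \{c_i,\ 2(p-1)-c_i\}$ for each $i$, and if we let $T$ be the set of indices where the flip is nontrivial, the equality $\sum c_i' = l$ forces $\sum_{i \in T} c_i = |T|(p-1)$; for $|T| \geq 2$ this implies $l \geq 2(p-1)$, contradicting the hypothesis, while for $|T| = 1$ it forces $c_{i_0} = p-1$, a trivial flip. Thus the decomposition is multiplicity-free, and any $\lsp(2n)$-submodule automatically has the form $W = \bigoplus_{(c_i) \in S} V_{(c_i)}$ for some set $S$ of shapes. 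It then remains to show $S$ contains every shape.

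Fix $(c_i) \in S$ and indices $i \neq j$ with $c_j \geq 1$. Using intra-shape $\mathfrak{sl}_{2,\cdot}$-moves (available since every $c_k \leq 2p-3$), prepare a monomial $m \in V_{(c_i)}$ with $a_j \geq 1$ and $a_i \leq p-2$. Then
\[
\{x_i y_j,\ m\} = b_i\, x_i^{a_i} y_i^{b_i-1} x_j^{a_j} y_j^{b_j+1} \ -\ a_j\, x_i^{a_i+1} y_i^{b_i} x_j^{a_j-1} y_j^{b_j},
\]
whose two summands lie in the distinct shape components $(c_i-1,\ c_j+1,\ \ldots)$ and $(c_i+1,\ c_j-1,\ \ldots)$. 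By multiplicity-freeness each shape component of an element of $W$ lies in $W$, so the nonzero second term certifies $(c_i+1,\ c_j-1,\ \ldots) \in S$; a standard transportation argument (each intermediate shape automatically obeys $c_k \leq 2p-3$ because $l < 2(p-1)$) then connects any two shapes of total $l$. Hence $S$ is all shapes and $W = m^l/m^{l+1}$. The main obstacle I anticipate is the multiplicity-freeness step, where the inequality $l < 2(p-1)$ enters essentially: the ``flip pairing'' $c_i \leftrightarrow 2(p-1) - c_i$ would produce a genuine nontrivial isomorphism at $l = 2(p-1)$, which is exactly where the claim of irreducibility breaks down.
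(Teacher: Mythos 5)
Your proposal is correct and takes essentially the same route as the paper: both decompose $m^l/m^{l+1}$ under the block-diagonal $\bigoplus_i \mathfrak{sl}_{2,i} \subset \lsp(2n)$, establish that the decomposition is multiplicity-free for $l<2(p-1)$, and then connect any two shapes by explicit degree-2 Hamiltonians. You spell out the multiplicity-freeness count and the $\{x_iy_j,-\}$ move explicitly, whereas the paper cites the Jacobson density theorem for irreducibility of each summand and defers the connectivity step to "direct inspection in the case $n=2$," but the underlying argument is the same.
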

\begin{proof}
Write $m_n^l / m_n^{l+1}$  for $m^k / m^{k+1}$.  It is easy  to  verify the assertion of  the lemma for $n=1$: in fact, the representation of $\lsp(2)= m_1/m_1^2$ on  $m_1^l / m_1^{l+1}$ is  irreducible for every $l\geq 0$. 
Moreover, the representations $m_1^l / m_1^{l+1}$ and $m_1^{l'} / m_1^{l'+1}$ are isomorphic if and only if $l+l' =2p-2$.

To prove the lemma in general, consider
the restriction of the representation of $\lsp(2n)$ on $m_n^l / m_n^{l+1}$  to the Lie subalgebra 
$$ \lsp(2)^{\oplus ^n} \mono \lsp(2n)$$ of the block diagonal matrices. The latter representation decomposes as follows 
	\begin{equation}\label{rep}
	m^l_n/m_n^{l+1}  =
	 \bigoplus_{i_1 + \ldots +  i_n = l} m_1^{i_1} / m_1^{i_1 + 1} 
	 \otimes \ldots \otimes m_1^{i_n} / m_1^{i_n + 1}. 		
 	\end{equation}
	By the Jacobson density theorem the representation of  $ \lsp(2)^{\oplus ^n} $ on each summand is irreducible. Moreover, if $l< 2(p-1)$ then these direct summands are pairwise non-isomorphic. 
	It follows that any subspace $V\subset m^l_n/m_n^{l+1}$ invariant under the $ \lsp(2)^{\oplus ^n} $-action is the sum of {\it some} of the summands appearing in (\ref{rep}). Hence, it suffices to prove that 
	if a $\lsp(2n)$-subrepresentatioin $W \subset  m^l_n/m_n^{l+1}  $ contains $m_1^{i_1} / m_1^{i_1 + 1}  \otimes \ldots \otimes m_1^{i_n} / m_1^{i_n + 1}$, for some partition $(i_1,  \ldots ,  i_n)$ of $l$ with $i_1>0$,
	the projection of $W$ to $m_1^{i_1-1} / m_1^{i_1} \otimes   m_1^{i_2+1} / m_1^{i_2+2}    \otimes \ldots \otimes m_1^{i_n} / m_1^{i_n + 1}$ is nonzero. This reduces the proof to the case $n=2$ which is shown by 
	direct inspection.  
	\end{proof}

\end{document}